\newtheorem{lemma}{Lemma}[section]
\newtheorem{theorem}{Theorem}[section]
\newtheorem{definition}{Definition}[section]
\newtheorem{corollary}{Corollary}[section]
\theoremstyle{definition}
\newtheorem{example}{Example}[section]
\newtheorem{remark}{Remark}[section]
\newtheorem{algorithm}{Algorithm}
\begin{document}

\title{Algorithms And Programming On The Minimal Combinations Of  Weights Of Projective Hypersurfaces
}

\author{Dun Liang       
}

\maketitle

\begin{abstract}
This paper designs an alogrithm to compute the minimal combinations of finite sets in Euclidean spaces, and applys the algorithm of study the moment maps and geometric invariant stability of hypersurfaces. The classical example of cubic curves is repeated by the algorithm. Furhtermore the alogrithm works for cubic surfaces. For given affinely indepdent subsets of monomials, the algorithm can output the unique unstable points of the Morse strata if it exists. Also there is a discussion on the affinely dependent sets of monomials.
  
\paragraph{keywords}\quad {minimal combination \and hypersurface \and symplectic reduction \and geometric invariant theory \and numerical criterion of stability}

\end{abstract}
\section{Introduction}
\label{intro}

Geometric invariant theory (GIT for short) was founded by Mumford \cite{MFK} to construct the quotients in algebraic geometry. One of the key ideas is the GIT stability, and \cite{MumStability} gives some elementary methods and examples of analyzing GIT stability. A pragmatic tool to study GIT stability is  the Hilbert-Mumford numerical criterion of stability (see \cite{MFK}, Chapter 2, Section 1). There are many works on this topic. Nowadays there are results of GIT stability on much more complicated objects, like \cite{Stabilityg5} and \cite{IMorrison}.

In the complex analytic setting, the GIT quotients correspond to the symplectic quotients of the preimage of the origin of the moment maps, and this is the theory of symplectic reduction (see \cite{Kirwan} and \cite{MW}). Furthermore, \cite{Ness}, \cite{Kirwan} and \cite{Kempf} showed that the Hesselink's stratification on the unstable set \cite{Hesselink} which is given by the positive values of the numerical criterion function coincides with the Morse stratification of the normsquare function of the moment map. This construction and Atiyah's convexity theorem (see \cite{Atiyah}) give rise the idea of the variation of GIT (see \cite{Dolgachev-Hu}).

In this work we come back to \cite{Kirwan} and \cite{Ness}. The GIT stability of hypersurfaces is studied by the moment map in Section 10 of \cite{Ness}, especially for cubic curves. The concept of minimal combinations is introduced by \cite{Kirwan}, to give all the possible positive values of the numerical criterion function, and the index of the Morse stratification of the normsquare of the moment map at the same time. Recently there is a refinement of the Morse stratification (see \cite{Kirwan1}).

Let $V$ be a complex vector space of dimension $n+1$, and ${\mathbb P}(V)\simeq {\mathbb P}^n({\mathbb C})$ be the projective space whose affine cone is $V$. Choose a basis of $V$, we can consider the action of the complex linear reductive group ${\rm GL}(V)\simeq {\rm GL}(n+1,{\mathbb C})$ on the projective space ${\mathbb P}(V)$ induced by the right matrix multiplication of ${\rm GL}(V)$ on $V$. For any complex reductive group $G$, we say $G$ acts linearly on ${\mathbb P}(V)$ if the action of $G$ on ${\mathbb P}(V)$ is given by a linear representation $\rho:G\rightarrow {\rm GL}(V)$. In general, the quotient space ${\mathbb P}(V)/G$  does not exist in any reasonable sense as an algebraic variety. There are various notions 
of quotients in the category of algebraic varieties. One of the main theorems of geometric invariant theory (see \cite{MFK}) asserts  that there exists a Zariski open subset ${\mathbb P}(V)^{\rm ss}$ of ${\mathbb P}(V)$, so called the set of semi-stable points, such that there is  a good categorical quotient, denoted ${\mathbb P}(V)^{\rm ss}// G$.

According to the Hilbert-Mumford numerical criterion of stability (see \cite{MFK}, Chapter 2, Section 1), there exists a real-valued function $M:{\mathbb P}(V)\rightarrow {\mathbb R}$, such that ${\mathbb P}(V)^{\rm ss}=\{x\in {\mathbb P}(V)\ | \ M(x)\leq 0\}$. The complementary set to the semi-stable points, namely  ${\mathbb P}(V)^{\rm us}= \{x\in {\mathbb P}(V)\ | \ M(x)> 0\}$ is called the set of unstable points.

Given a complex reductive group $G$ acting linearly on ${\mathbb P}(V)$ in the category of  algebraic varieties, we can view this as an analytic action of a complex Lie group on a complex manifold. Then the maximal compact subgroup $K$ of the corresponding complex Lie group $G$ acts symplectically on ${\mathbb P}(V)$. This means the image of $K$ under $\rho$ is contained in the unitary group ${\rm U}(V)$. That is, we may chose a suitable basis  of $V$ such that the 
action of $K$ is via unitary matrices.

If $\frak k$ is the Lie algebra of $K$, then its complexification  $\frak k\otimes _{\mathbb{R}} \mathbb{C}$ is 
isomorphic to $\frak g$, the Lie algebra of $G$.
Let ${\frak g}^\vee ={\rm Hom}_{\mathbb C}({\frak g},{\mathbb C})$. There exists a moment map ${\frak m}:{\mathbb P}(V)\rightarrow {\frak g}^\vee$ with respect to the linear action of $K$ on ${\mathbb P}(V)$. The theory of symplectic reduction shows that there is an isomorphism (see \cite{Kempf}) $${\frak m}^{-1}(0) \simeq {\mathbb P}(V)^{\rm ss}//G.$$ 

For $x\in {\mathbb P}(V)$, let $K.x$ be the $K$-orbit of $x$ and let $\overline{K.x}$ be its Zariski closure in ${\mathbb P}(V)$. Let $0$ be the zero element of ${\frak g}^\vee$, and $\overrightarrow{\sf d}$ be the signed distance defined as in (\ref{signed}). For the numerical criterion function $M(x)$, we have (see \cite{Ness},Lemma 3.1 and Lemma 3.2 or \cite{Dolgachev-Hu}, Theorem 2.1.9) $$M(x)=\overrightarrow{\sf d}(0,{\frak m}(\overline{K.x})).$$

According to \cite{Kirwan} and \cite{Ness}, it is very important to study the critical points of the function $\|{\frak m}\|^2$ and the function ${\frak m}_T$ which is the restriction of the moment map on a given maximal torus $T$ of $K$. For example, an unstable point in ${\mathbb P}(V)$ must be a critical point of $\|{\frak m}\|^2$. Let ${\mathscr M}$ be the set of $T$-fixed points, then ${\mathscr M}$ is a finite set. Let
 ${\mathbb A}={\frak m}( \mathscr M )$, then ${\mathbb A}\subset {\frak m}_T({\mathbb P}(V))\subset {\frak t}^\vee$ where ${\frak t}$ is the Lie algebra of $T$. 

The concept of minimal combinations (see Definition \ref{mc}) was introduced by \cite{Kirwan}. For the set 
${\mathbb A  } = {\frak m}( \mathscr M )$ in the Euclidean space 
${\frak t}^\vee$
we define the set of minimal combinations, denoted ${\mathbb A}^{\cal B}$, by the following condition:
 $\beta \in {\frak t}^\vee$  belongs to  ${\mathbb A}^{\cal B}$ if and only if there exists a subset $S\subset {\mathbb A}$ such that $\beta$ is the nearest point from $0$ to the convex polytope generated by $S$.

 Choose a Weyl chamber ${\frak t}_+\subset {\frak t}$, and let ${\mathbb A}^{\cal B}_+={\mathbb A}^{\cal B}\cap {\frak t}^\vee_+$.
 For any  $\beta \in {\mathbb A}^{\cal B}_+$, define $Z_\beta$ to be the affine subspace generated by $\alpha \in {\mathscr M}$ such that ${\frak m}(\alpha)\perp \beta$ in ${\frak t}^\vee \simeq {\mathbb R}^{n+1}$. Then define 
  $C_\beta = K.(Z_\beta \cap {\frak m}^{-1}(\beta))$. It is shown in \cite{Kirwan} that 
  the critical set of $\|{\frak m}_T\|^2$ is the disjoint union of the $C_{\beta}$ for all $\beta \in {\mathbb A}^{\cal B}_+$.

The concept of the minimal combinations of a finite set in a Euclidean space is independent to the geometric background 
from which 
it originates. In this work we construct the general algorithms to compute the minimal combinations. We make an improvement of the description of the set ${\mathbb A}^{\cal B}$, which is summarized as the following theorem.
\begin{theorem} Let ${\mathbb A}$ be a finite set in a Euclidean space. Let ${\mathbb A}^{\cal B}$ be the set of minimal combinations of ${\mathbb A}$. If $\beta \in  {\mathbb A}^{\cal B}$ and $\beta \notin {\mathbb A}$, then there exists an affinely independent subset $S\subset {\mathbb A}$, such that $\sharp(S)\geq 2$, the point $\beta $ is the nearest point from $0$ to the convex polytope ${\cal C}(S)$ generated by $S$, and $\beta$ is contained in the relative interior of ${\cal C}(S)$.
\end{theorem}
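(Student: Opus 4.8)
The plan is to start from any witnessing subset and shrink it to a minimal one, then read off all three desired properties (affine independence, the nearest-point property, and relative-interior containment) from minimality together with standard facts about convex hulls. By hypothesis there is some $S_0 \subseteq {\mathbb A}$ for which $\beta$ is the nearest point from $0$ to ${\cal C}(S_0)$; in particular $\beta \in {\cal C}(S_0)$. Among all subsets $T \subseteq S_0$ with $\beta \in {\cal C}(T)$ I would choose one, call it $S$, of \emph{minimal cardinality}. Since a single point of ${\mathbb A}$ equal to $\beta$ is excluded by the assumption $\beta \notin {\mathbb A}$, minimality forces $\sharp(S) \ge 2$.

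Next I would verify that $S$ is affinely independent by a perturbation argument. Fix a convex representation $\beta = \sum_{s \in S} \mu_s s$ with $\mu_s \ge 0$ and $\sum_{s} \mu_s = 1$. If some $\mu_s = 0$, then $\beta$ already lies in ${\cal C}(S \setminus \{s\})$, contradicting minimality, so in fact every $\mu_s > 0$. Now suppose $S$ were affinely dependent: there would be coefficients $\lambda_s$, not all zero, with $\sum_{s} \lambda_s s = 0$ and $\sum_{s} \lambda_s = 0$; since they sum to zero, at least one is strictly negative. Setting $\mu_s(t) = \mu_s + t \lambda_s$ keeps $\sum_{s} \mu_s(t) = 1$ and $\sum_{s} \mu_s(t)\, s = \beta$ for all $t$, while for the smallest $t > 0$ at which some coefficient vanishes all remaining coefficients are still nonnegative, so we again obtain $\beta$ as a convex combination over a proper subset of $S$ — contradicting minimality. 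Hence $S$ is affinely independent, the representation $\beta = \sum_{s} \mu_s s$ is unique, and because all $\mu_s > 0$ this is exactly the statement that $\beta$ lies in the relative interior of ${\cal C}(S)$.

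It remains to check that $\beta$ is still the nearest point from $0$ to the smaller polytope ${\cal C}(S)$. Here I would use a sandwiching argument: since $S \subseteq S_0$ we have ${\cal C}(S) \subseteq {\cal C}(S_0)$, so the distance from $0$ to ${\cal C}(S)$ is at least the distance from $0$ to ${\cal C}(S_0)$, which equals $\|\beta\|$; on the other hand $\beta \in {\cal C}(S)$ shows this distance is at most $\|\beta\|$. Thus the two distances coincide, and by uniqueness of the nearest point to a convex set the minimizer on ${\cal C}(S)$ must be $\beta$ itself.

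I expect the main obstacle to be packaging these reductions so that they interact correctly. The minimality used to force affine independence and strict positivity of the barycentric coordinates is a property of $\beta$ relative to ${\cal C}(S)$, whereas the nearest-point property is inherited from the larger hull ${\cal C}(S_0)$, and one must be careful that passing to $S$ does not move the foot of the perpendicular. The perturbation step is the technical heart, and uniqueness of the nearest point (a consequence of convexity) is what glues the inherited minimality back to the original defining property of ${\mathbb A}^{\cal B}$.
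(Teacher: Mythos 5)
Your proof is correct, but it takes a genuinely different route from the paper's. The paper splits the work between Lemma~\ref{Aw} and Theorem~\ref{ak}: Lemma~\ref{Aw} extracts from a witnessing set a subset whose difference vectors form a basis of the linear span, and Theorem~\ref{ak} then runs a descent using the orthogonal projection $\sigma(S)$ of the origin onto ${\rm Aff}(S)$ (computed by the matrix formula $\bigl(I-B(B^TB)^{-1}B^T\bigr)x$), dropping points whose barycentric coordinates vanish until all are strictly positive; the dichotomy driving that descent rests on the topological path-crossing argument of Lemma~\ref{ms}. You instead take a witnessing subset of \emph{minimal cardinality} and obtain affine independence and strict positivity of the barycentric coordinates in one stroke, via the classical Carath\'eodory perturbation along an affine dependence; the nearest-point property is then recovered by the same sandwich-plus-uniqueness argument that the paper itself uses inside the proof of Theorem~\ref{ak} (there, to show $\tau(S')=\tau(S)$ after dropping a point). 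Your route is more elementary and self-contained: it needs neither the projection matrices nor any topology, and it also repairs a soft spot in the paper's proof of Lemma~\ref{Aw}, where it is asserted that $x\in {\cal C}(S)$ forces the affine coordinates of $x$ with respect to the spanning subset $S'$ to be nonnegative --- a claim that does not follow as stated (a point in the hull of $S$ can have a negative coordinate relative to an affine basis chosen from $S$), but which your minimality argument makes rigorous. What the paper's heavier machinery buys in exchange is constructivity: the explicit projection $\sigma(S)$ and the sign test on its coefficients are precisely what Algorithm~\ref{Algorithm1} implements in SAGE, whereas your minimal-cardinality subset is produced by pure existence and offers no computational handle beyond enumerating subsets.
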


Based on this theorem, we construct an algorithm for computing ${\mathbb A}^{\cal B}$, and realize it in SAGE. Then we apply it to the GIT stability and moment map problem for projective hypersurfaces. This time $G={\rm SL}(n,{\mathbb C})$ and $V={\rm Sym}^d(({\mathbb C}^\vee)^n)$ is the $\binom{n+d-1}{d}$-dimensional ${\mathbb C}$-vector space of $n$ variables homogeneous polynomials of degree $d$. The moment map is explicit for this situation (see \cite{Ness}, Lemma 10.1). 

This paper is orgainzed by the following.
In Section \ref{AOMC} we introduce the concept of minimal combinations, independent with the geometric background which it comes from. We prove our main result Theorem \ref{ak}, and design Algorithm \ref{Algorithm1} to compute the minimal combinations.
In Section \ref{MMMCWH} we illustrate the geometric backgrounds of the symplectic reduction theory of hypersurfaces. In order to compute some concrete results, we prove Corollary \ref{Fbeta}.
 In Section \ref{SOCC} and Section \ref{SOCS}, we program Algorithm \ref{Algorithm1} and Corollary \ref{Fbeta} for the problem of GIT stability and the moment maps of cubic curves and cubic surfaces by SAGE. In Section \ref{EAD} we discuss an example for what may happen if the the set of monomials is affinely dependent. We affiliate the SAGE notebook in the end of the paper.
 
\section*{Acknowledgement}
The author would like to thank Dr Jingyue Chen in Capital Normal University for her help of the writing and drawing jobs of the paper, she and Dr Jianke Chen in Chinese Media University joined the discussion of this work. Thank Professor Jerome Hoffman in LSU for his helps on the writing of the paper. Also  Professor Yi Hu in University of Arizona for his nice course on this topic in Tsinghua University.

\section{Algorithms on Minimal Combinations}\label{AOMC}

Let $S$ be a finite set in the $n$-dimensional Euclidean space ${\mathbb R}^n$. Later on we will only consider the case when all the points of $S$ have integer coordinates, in this section we do not have this assumption. Let ${\cal C}(S)$ be the convex polytope generated by $S$. Let $O=(0,\ldots ,0)$ be the origin of ${\mathbb R}^n$. By \cite{GTM168}, Lemma 3.1, there exists a unique point $w_S$, such that 
\begin{equation}\label{ws}\|w_S\|= \inf_{x\in {\cal C}(S)}\|x\|.\end{equation}
\begin{definition}\label{tau} Let 
$${\mathscr S}=\{\, S\subset {\mathbb R} \,  | \, S \mbox{\rm\ is finite}, S\neq \varnothing \mbox{{\rm\ and\ }} O\notin S\},$$
by (\ref{ws}), we can define the {\bf map of the shortest point} as 
\begin{equation}\begin{array}{cccc}
\tau : & {\mathscr S} & \longrightarrow & {\mathbb R}^n\\ 
        & S & \longmapsto & w_S \end{array}
\end{equation}
where $w_S$ is defined as (\ref{ws}).
\end{definition}
Definition \ref{tau} is well defined because of the existence and uniqueness of $w_S$. The point $\tau(S)$ is the shortest point of ${\cal C}(S)$. Obviously, the function
$\|\tau \|:$ ${\mathscr S}\rightarrow {\mathbb R}$, $S\mapsto \|\tau(S)\|$ is a decreasing function with respect to the partial order ``$\subset$" on ${\mathscr S}$. That is, if $S_1\subset S_2$, then 
\begin{equation}\label{decreasing}
\|\tau(S_1)\|\geq \|\tau(S_2)\|.
\end{equation}

\begin{definition}[see \cite{Kirwan}]\label{mc} Let $A\in {\mathscr S}$ and let ${\cal P}(A)=2^A\backslash\{\varnothing \}$ be the set of non-empty subsets of $A$. The set 
\begin{equation}
A^{\cal B}:=\tau({\cal P}(A))=\{\tau(S) \ | \  S\subset A, S\neq \varnothing\}\subset {\mathbb R}^n
\end{equation}
is called the {\bf minimal combination} of $A$, elements of $A^{\cal B}$ are called the {\bf minimal combinations} of $A$.
\end{definition}
\begin{remark} The Definition \ref{mc} is well defined. In fact, since $A$ is a finite set, any non-empty subset $S\in {\cal P}(A)$ is also finite. Furthermore, we have ${\cal C}(S)\subset {\cal C}(A)$ because $S\subset A$, thus $O\notin {\cal C}(A)$ implies that $O\notin {\cal C}(S)$, so $S\in {\mathscr S}$. The set $A^{\cal B}$ is the set of the nearest points from the origin $O$ to the convex sets which are generated by the non-empty subsets of $A$.
\end{remark}

We will give an algorithm to compute $A^{\cal B}$, but before that, let us give a better description of $A^{\cal B}$.

\begin{lemma}\label{Aw} Let $A\in {\mathscr S}$. Denote
$$I(A)= \{ \, S\in {\cal P}(A) \ | \  S\ \mbox{\rm is an affinely independent set}\, \}$$
and let 
$$A^{\cal W} = \{ \, \tau(S) \ | \  S\in I(A)\, \},$$
then $A^{\cal B}=A^{\cal W}$.
\end{lemma}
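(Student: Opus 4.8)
The plan is to establish the two inclusions separately. The inclusion $A^{\cal W}\subseteq A^{\cal B}$ is immediate, since $I(A)\subseteq {\cal P}(A)$ and therefore $\tau(I(A))\subseteq \tau({\cal P}(A))$. All of the work lies in the reverse inclusion $A^{\cal B}\subseteq A^{\cal W}$.

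So I would fix an arbitrary $\beta\in A^{\cal B}$, say $\beta=\tau(S)=w_S$ for some non-empty $S\subseteq A$, and aim to produce an affinely independent subset $S'\subseteq S$ with $\tau(S')=\beta$. The natural candidate is to let $S'$ be a subset of $S$ that is minimal with respect to inclusion among all $T\subseteq S$ satisfying $\beta\in {\cal C}(T)$. Such a minimal $S'$ exists because $S$ is finite and $\beta=w_S\in {\cal C}(S)$, and it is non-empty because $\beta\in {\cal C}(S')$ forces $S'\neq\varnothing$.

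The key step, and the one I expect to be the main obstacle, is to show that this minimal $S'$ is automatically affinely independent; this is exactly the affine form of Carath\'eodory's theorem, which I would prove by the standard reduction argument. Suppose for contradiction that $S'=\{v_0,\dots,v_k\}$ is affinely dependent, so there are reals $\mu_0,\dots,\mu_k$, not all zero, with $\sum_i\mu_i=0$ and $\sum_i\mu_i v_i=0$. Writing $\beta=\sum_i\lambda_i v_i$ as a convex combination with $\lambda_i\geq 0$ and $\sum_i\lambda_i=1$, for every $t$ the coefficients $\lambda_i-t\mu_i$ still sum to $1$ and give $\beta=\sum_i(\lambda_i-t\mu_i)v_i$. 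Choosing $t$ of smallest absolute value for which some coefficient vanishes while all remain non-negative, I obtain an expression of $\beta$ as a convex combination of a proper subset of $S'$, i.e. $\beta\in {\cal C}(S'\setminus\{v_j\})$ for some $j$. This contradicts the minimality of $S'$, so $S'$ is affinely independent and hence $S'\in I(A)$.

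It then remains to verify that $\beta$ is actually the shortest point of ${\cal C}(S')$, i.e. $\tau(S')=\beta$. Here I would use the nesting ${\cal C}(S')\subseteq {\cal C}(S)$, which holds since $S'\subseteq S$. By the monotonicity (\ref{decreasing}) we have $\|\tau(S')\|\geq \|\tau(S)\|=\|\beta\|$, while $\beta\in {\cal C}(S')$ gives $\|\tau(S')\|=\inf_{x\in {\cal C}(S')}\|x\|\leq \|\beta\|$ by (\ref{ws}). Hence $\|\tau(S')\|=\|\beta\|$, and since $\beta\in {\cal C}(S')$ attains the minimal norm on ${\cal C}(S')$, the uniqueness of the shortest point in (\ref{ws}) forces $\tau(S')=\beta$. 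Therefore $\beta=\tau(S')\in A^{\cal W}$, which completes the inclusion $A^{\cal B}\subseteq A^{\cal W}$ and the proof.
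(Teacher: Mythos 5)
Your proof is correct, but at the decisive step it takes a genuinely different — and in fact more solid — route than the paper. Both arguments share the same skeleton: produce an affinely independent $S'\subseteq S$ with $\beta\in{\cal C}(S')$, then conclude $\tau(S')=\beta$ by combining the monotonicity (\ref{decreasing}) with the inequality $\|\tau(S')\|\leq\|\beta\|$ coming from $\beta\in{\cal C}(S')$, and invoking the uniqueness in (\ref{ws}); your closing step is essentially identical to the paper's. The difference is how $S'$ is found. The paper picks $S'$ so that its difference vectors form a basis of the linear span of the differences of $S$, i.e.\ a maximal affinely independent subset with ${\rm Aff}(S')={\rm Aff}(S)$; affine independence is then automatic, but the paper must argue that $\beta\in{\cal C}(S')$, and it does so by asserting that $x\in{\cal C}(S)$ forces the affine coordinates of $x$ with respect to $S'$ to be non-negative. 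That inference is not valid: for collinear points $v_1=(1,3)$, $v_2=(1,1)$, $v_3=(1,-1)$ one may take $S'=\{v_1,v_2\}$, and then $\tau(S)=(1,0)$ lies in ${\cal C}(S)$ but has affine coordinates $(-\tfrac{1}{2},\tfrac{3}{2})$ with respect to $S'$, so $\tau(S)\notin{\cal C}(S')$ for this choice. You dualize the construction: you take $S'$ minimal among subsets of $S$ whose convex hull contains $\beta$, so that $\beta\in{\cal C}(S')$ holds by construction, and you then prove affine independence by the standard Carath\'eodory exchange argument — perturbing the convex coefficients along an affine dependence until one vanishes, contradicting minimality. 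What your approach buys is exactly the step the paper's argument leaves unjustified (and which, as stated, is false for an arbitrary spanning choice of $S'$); what the paper's approach would buy, were that membership step repaired, is brevity, since affine independence comes for free from the basis choice.
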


\begin{proof}\ The ``$\supset$" part is obvious. Let us proof the ``$\subset$" part. Let $x\in A^{\cal B}$. By the Definition \ref{mc} of $A^{\cal B}$, there exists $S\in {\cal P}(A)$, such that $x=\tau (S)\in {\cal C}(S)$. Suppose $S=\{x_1,\ldots , x_s\}$. Let $L(S)$ be the linear subspace of ${\mathbb R}^n$ spanned by the vectors $\{x_1-x_0, x_2-x_0, \ldots , x_s-x_0\}$. Then there exists a subset $S'=\{x_1', \ldots , x_{s'}'\} \subset S$, such that $\{x_1'-x_0, \ldots , x_{s'}'-x_0\}$ forms a basis of $L(S)$. Thus $S'$ is affinely independent. Let 
$${\rm Aff}(S)=\left\{ \sum_{i=1}^s \lambda_ix_i \, \right| \left. \sum_{i=1}^s \lambda_i=1, \lambda_1, \ldots , \lambda_s \in {\mathbb R} \right\}$$
be the affine cone of $S$, we have ${\rm Aff}(S')={\rm Aff}(S)$.

Since $x\in {\cal C}(S)\subset {\rm Aff}(S)={\rm Aff}(S')$, we have $x=\lambda_0x_0+\sum_{i=1}^{s'} \lambda_ix_i'$ and $\lambda_0+\lambda_1+\cdots + \lambda_{s'}=1$. But $x\in {\cal C}(S)$, so $\lambda_0, \ldots , \lambda_{s'}\geq 0$, thus $x\in {\cal C}(S')$. Note that $\|\tau\|$ is decreasing with respect to ``$\subset$", we have $\|\tau(S')\|\geq \|\tau(S)\|$ because $S'\subset S$.

On the other hand, we have ${\cal C}(S')\subset {\cal C}(S)$ because $S' \subset S$. Then $\|y\|\leq \|\tau(S')\|$ for all $y\in {\cal C}(S')$, in particular, we have $\|x\|\leq \|\tau(S')\|$. But $\tau(S')$ is the unique shortest point in ${\cal C}(S')$, so we have $x=\tau(S)$. i.e. $x\in A^{\cal W}$. \quad $\blacksquare$
\end{proof}

Let $\sharp(S)$ be the cardinality of $S$. In Lemma \ref{Aw}, we have 
$$I(A)\subset \{\, S\subset {\cal P}(A) \ | \ \sharp(S)\leq n
+1\}$$ because any $n+2$ points in ${\mathbb R^n}$ are affinely dependent. Thus
\begin{equation}\label{Aw1}
A^{\cal W} = \{ \, \tau(S) \ | \  S\ \mbox{is affinely independent and \ }  \sharp(S) \leq n+1 \}.
\end{equation}

Let $S=\{ x_1, \ldots , x_s\}\in {\mathscr S}$. For $i = 1, \ldots , s$, regard the points $x_1, \ldots , x_s$ of $S$ as column vectors in ${\mathbb R}^n$. Let $B_i$ be the matrix 
$$(x_j-x_i \ | \  j=1,2,\ldots , s,\, j\neq i)$$
whose columns are $x_j-x_i$ for $i=1,\ldots s$. Let 
$$P_i=B_i(B_i^TB_i)^{-1}B_i^T,$$ where $B_i^T$ denotes the transpose matrix of $B_i$.

\begin{lemma}[See \cite{Huang}] Notations as above. For any $x\in {\rm Aff}(S)$, let $I$ be the $n \times n$ identity matrix, the point
\begin{equation}\label{x*}
x^*=(I-P_i)x \in {\rm Aff}(S)
\end{equation}
is independent to the choice of $x$ and the index $i$, and $x^*$
is the unique nearest point from the origin $O$ to the affine subspace ${\rm Aff}(S)$.
\end{lemma}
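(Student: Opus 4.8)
The plan is to recognize $P_i$ as the classical orthogonal projection of ${\mathbb R}^n$ onto the linear subspace $L(S)={\rm span}\{x_j-x_i \mid j\neq i\}$, and then to read the nearest-point property off from the geometry of orthogonal decomposition. First I would note that, since $S$ is affinely independent, the $n\times(s-1)$ matrix $B_i$ has full column rank, so $B_i^TB_i$ is invertible and $P_i=B_i(B_i^TB_i)^{-1}B_i^T$ is well defined. A short standard computation then shows that $P_i$ is symmetric ($P_i^T=P_i$, using that $B_i^TB_i$ and its inverse are symmetric), idempotent ($P_i^2=P_i$), and satisfies $P_iB_i=B_i$; hence ${\rm im}(P_i)={\rm col}(B_i)=L(S)$, so $P_i$ is exactly the orthogonal projection onto $L(S)$ and $I-P_i$ is the orthogonal projection onto the complement $L(S)^\perp$.

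The next step is to eliminate the dependence on the index $i$. Here I would use that ${\rm Aff}(S)$ has a well-defined direction space $\{u-v \mid u,v\in {\rm Aff}(S)\}$, and that this space equals $L(S)$ for every base index $i$, because $x_j-x_k=(x_j-x_i)-(x_k-x_i)$ shows all the difference vectors lie in a single subspace regardless of which point is subtracted. Since the orthogonal projection onto a fixed subspace is unique, $P_i$ does not depend on $i$; write $P=P_i$. Independence of $x$ is then immediate: for $x,x'\in {\rm Aff}(S)$ the difference $x-x'$ lies in $L(S)$, so $(I-P)(x-x')=0$, i.e. $(I-P)x=(I-P)x'$.

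To locate $x^*$, I would verify the two defining properties of the foot of the perpendicular from $O$ to ${\rm Aff}(S)$. On one hand $x^*=x-Px\in x+L(S)={\rm Aff}(S)$, since $Px\in L(S)$. On the other hand $x^*=(I-P)x\in {\rm im}(I-P)=L(S)^\perp$, so the displacement $O-x^*=-x^*$ is orthogonal to the direction space $L(S)$. The point of ${\rm Aff}(S)$ whose displacement from the origin is orthogonal to $L(S)$ is precisely the minimizer of $\|p\|$ over $p\in {\rm Aff}(S)$: writing $p=x+\ell$ with $\ell\in L(S)$ and using the Pythagorean splitting of $x$ along $L(S)$ and $L(S)^\perp$, the norm $\|p\|^2$ is minimized exactly when the $L(S)$-component vanishes, which forces $p=x^*$. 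Uniqueness is guaranteed by the Hilbert projection theorem applied to the closed convex set ${\rm Aff}(S)$ (the same principle as \cite{GTM168}, Lemma 3.1). Combining these facts identifies $x^*$ as the unique nearest point from $O$ to ${\rm Aff}(S)$.

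I expect the only genuinely substantive point to be the identification of $P_i$ with the orthogonal projection onto $L(S)$; once that is in place, the independence statements and the nearest-point characterization are routine consequences of orthogonal decomposition. The single care-point is that the formula for $P_i$ presupposes the affine independence of $S$ (so that $B_i^TB_i$ is invertible), which is exactly the setting in which this lemma is used, in view of Lemma \ref{Aw} and equation (\ref{Aw1}).
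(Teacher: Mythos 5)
Your proof is correct. It is not, however, the paper's own argument, because the paper gives essentially none: its entire proof is the single sentence ``Let $y=0$ in the proof of the lemma in \cite{Huang}'', deferring to that reference (which treats the nearest point to an affine subspace from an arbitrary point $y$). Your route --- checking that $P_i$ is symmetric and idempotent with $P_iB_i=B_i$, hence the orthogonal projector onto the direction space $L(S)$, and then reading independence of $i$ and of $x$, membership $x^*\in{\rm Aff}(S)$, and the nearest-point property off the decomposition ${\mathbb R}^n=L(S)\oplus L(S)^\perp$ --- is the standard computation that presumably constitutes the cited proof, so what your write-up buys is self-containedness rather than a new idea. Two further remarks. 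First, your closing ``care-point'' identifies a genuine imprecision in the paper's statement: the lemma is stated for arbitrary $S\in{\mathscr S}$ (``Notations as above''), yet $P_i$ exists only when $B_i$ has full column rank, i.e.\ when $S$ is affinely independent; this is indeed the only setting in which the lemma is invoked later (Lemma \ref{ms}, Theorem \ref{ak}, and Algorithm \ref{Algorithm1}, whose degenerate branch is discarded), so making the hypothesis explicit is an improvement over the paper. Second, your appeal to the Hilbert projection theorem is redundant: once you know $x^*\in{\rm Aff}(S)$ and $x^*\perp L(S)$, the Pythagorean identity $\|p\|^2=\|x^*\|^2+\|p-x^*\|^2$ for $p\in{\rm Aff}(S)$ already yields both minimality and uniqueness, with no need to quote the convex-set version of the projection theorem from \cite{GTM168}.
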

\begin{proof} Let $y=0$ in the proof of the lemma in \cite{Huang}.\quad $\blacksquare$
\end{proof}
\begin{definition} The map
\begin{equation}
\begin{array}{cccc}
\sigma: & {\mathscr S} & \longrightarrow & {\mathbb R}^n \\
        & S & \longmapsto & x^*

\end{array}
\end{equation}
where $x^*$ is defined as in (\ref{x*}) is called the {\bf minimal square map}. The point $\sigma(S)$ is called the {\bf minimal square} of $S$.
\end{definition}

Let ``$\langle \, , \, \rangle$" be the inner product in ${\mathbb R}^n$. For any $x\in {\rm Aff}(S)$, we have $\langle \sigma(S),x\rangle=0$, or we say $\sigma(S)\perp {\rm Aff}(S)$.  

\begin{lemma}\label{ms}Let $S=\{ x_1,\ldots , x_s\}\in {\mathscr S}$ and $S$ be affinely independent.  Suppose $\tau(S)=\sum_{i=1}^s \lambda_ix_i$ such that $\lambda_i>0$ for all $i=1,\ldots ,s$. Then $\sigma(S)\in {\cal C}(S)$.
\end{lemma}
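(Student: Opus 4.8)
The plan is to prove the stronger statement that $\sigma(S)=\tau(S)$; since $\tau(S)\in {\cal C}(S)$ by construction, this immediately yields $\sigma(S)\in {\cal C}(S)$. The conceptual reason is that $\sigma(S)$ is the foot of the perpendicular from $O$ to the whole affine span ${\rm Aff}(S)$, whereas $\tau(S)$ is the nearest point to the smaller set ${\cal C}(S)\subset {\rm Aff}(S)$. The positivity hypothesis $\lambda_i>0$ says precisely that $\tau(S)$ lies in the relative interior of the simplex ${\cal C}(S)$, and at an interior minimizer the optimality condition should upgrade from an inequality to an equality, forcing $\tau(S)$ to be perpendicular to the direction space of ${\rm Aff}(S)$ as well, hence to coincide with $\sigma(S)$.

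First I would record the variational characterization of the nearest point onto a convex set: writing $w=\tau(S)$, the point $w$ is the nearest point of ${\cal C}(S)$ to the origin if and only if $w\in {\cal C}(S)$ and $\langle w, y-w\rangle \geq 0$ for every $y\in {\cal C}(S)$ (the standard obtuse-angle inequality, with the sign coming from projecting the origin). Specializing $y$ to each vertex $x_i$ gives $\langle w, x_i-w\rangle \geq 0$ for all $i$.

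Next I would exploit the explicit representation $w=\sum_{i=1}^s \lambda_i x_i$ with $\lambda_i>0$ and $\sum_{i=1}^s\lambda_i=1$ (the latter holds because $w\in {\cal C}(S)$ and $S$ is affinely independent, so the barycentric coordinates are unique and sum to one). Taking the weighted combination of the vertex inequalities, $$\sum_{i=1}^s \lambda_i \langle w, x_i-w\rangle = \Big\langle w, \sum_{i=1}^s\lambda_i x_i - w\Big\rangle = \langle w, w-w\rangle = 0.$$ Since every summand is nonnegative and every weight $\lambda_i$ is strictly positive, each term must vanish, so $\langle w, x_i-w\rangle =0$ for all $i$. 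Because the vectors $x_i-w$ span the direction space $L$ of ${\rm Aff}(S)$, this shows $\langle w, v\rangle =0$ for all $v\in L$, i.e. $w\in L^\perp$. As $w$ is also an affine combination of the $x_i$, we have $w\in {\rm Aff}(S)\cap L^\perp$; but $\sigma(S)$ is characterized by exactly these two properties (it is the unique element of ${\rm Aff}(S)$ orthogonal to the direction space, which is the content of the preceding lemma on $x^*=(I-P_i)x$). By uniqueness $w=\sigma(S)$, hence $\sigma(S)=\tau(S)\in {\cal C}(S)$.

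The only step requiring genuine care is the variational inequality in the second paragraph; everything after it is formal. I expect the main subtlety to be verifying that the hypothesis is used in full: affine independence guarantees both $\sum_{i=1}^s\lambda_i=1$ and that the $x_i-w$ really span $L$, while the strict positivity $\lambda_i>0$ is exactly what turns the inequalities $\langle w, x_i-w\rangle\geq 0$ into equalities. If some $\lambda_i$ were allowed to vanish the argument would break, which matches the geometric picture that $\sigma(S)$ may fall outside ${\cal C}(S)$ when $\tau(S)$ lies on a proper face of the simplex.
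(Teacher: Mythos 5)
Your proposal is correct, and it proves the lemma by a genuinely different route than the paper does. You argue directly: starting from the obtuse-angle variational inequality $\langle \tau(S), y-\tau(S)\rangle \geq 0$ for all $y\in {\cal C}(S)$, you take the $\lambda_i$-weighted combination of the vertex inequalities, observe that the total is zero, and use strict positivity of the $\lambda_i$ (complementary slackness) to force $\langle \tau(S), x_i-\tau(S)\rangle =0$ for every $i$; since the vectors $x_i-\tau(S)$ span the direction space of ${\rm Aff}(S)$, this identifies $\tau(S)$ with $\sigma(S)$ by the uniqueness characterization of the foot of the perpendicular, and the conclusion $\sigma(S)\in {\cal C}(S)$ follows. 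The paper instead argues by contradiction and topology: assuming $\sigma(S)\notin {\cal C}(S)$ while $\tau(S)$ lies in the relative interior ${\cal C}(S)^\circ$, the segment $\overline{\sigma(S)\,\tau(S)}$ must cross the boundary $\partial {\cal C}(S)$ at some point $P$ distinct from both endpoints, and since $\sigma(S)$ is perpendicular to ${\rm Aff}(S)$, a Pythagorean estimate gives $\|\overline{OP}\|<\|\tau(S)\|$ with $P\in {\cal C}(S)$, contradicting the minimality of $\tau(S)$. Your approach buys several things: it is direct rather than by contradiction, it avoids the point-set-topology and figure-dependent steps (the boundary-crossing claim is stated somewhat informally in the paper), and it delivers the stronger equality $\sigma(S)=\tau(S)$ explicitly, which is exactly the form in which the lemma is invoked in the proof of Theorem \ref{ak}. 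The paper's argument, in exchange, is more visual and makes the geometric mechanism (why $\sigma(S)$ escaping the simplex forces a coordinate to vanish) transparent. Your only unproved ingredient, the variational inequality, is standard and can be verified in two lines by expanding $\|\tau(S)+t(y-\tau(S))\|^2\geq \|\tau(S)\|^2$ for small $t>0$, so there is no gap.
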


\begin{proof} We prove this lemma by contradiction. Assume $\sigma(S)\notin {\cal C}(S)$, we will show that there exists $i\in \{1,2,\ldots ,s\}$, such that $\lambda_i=0$.

Since ${\cal C}(S) \subset {\rm Aff}(S)$, we have $\|\sigma(S)\|\leq \|\tau(S)\|$. If $\|\sigma(S)\|=\|\tau(S)\|$, we have $\tau(S)\in {\rm Aff}(S)$ because $\tau(S)\in {\cal C}(S)$ and ${\cal C}(S)\subset {\rm Aff}(S)$. By the uniqueness of $\sigma(S)$ in ${\rm Aff}(S)$, we have $\sigma(S)=\tau(S)$. Thus if $\sigma(S)\notin {\cal C}(S)$, we must have $\|\sigma(S)\|<\|\tau(S)\|$.

Let $\sigma(s)=\sum_{i=1}^s \nu_ix_i$ and $\sum_{i=1}^s\nu_i=1$.  We assumed that $\sigma(S)\notin {\cal C}(S)$, thus there exists $i\in \{1, \ldots , s\}$, such that $\nu_i<0$.

Let $${\cal C}(S)^\circ = \left\{ \, \sum_{i=1}^s \lambda_ix_i \, \right| \left. \, 
\sum_{i=1}^s \lambda_i=1, \lambda_1, \ldots , \lambda_s>0\, \right \}$$ be the relative interior of the convex polytope ${\cal C}(S)$. By assumption we have $\tau(S)\in {\cal C}(S)^\circ$ and $\sigma(S)\notin {\cal C}(S)$. The set $S$ is affinely independent, so 
$$\dim {\rm Aff}(S)= \dim {\cal C}(S) = s-1.$$
The polytope ${\cal C}(S)$ is homeomorphic to the $s-1$ dimensional unit ball $B^{s-1}$. Topologically, the affine space ${\rm Aff}(S)$ is homeomorphic to the $s-1$ dimensional Euclidean space ${\mathbb R}^{s-1}$. The polytope ${\cal C}(S)$ is embedded in ${\rm Aff}(S)$ in the unit ball. By point set topology, any path connecting $\tau(S)$ and $\sigma(S)$ will have a non-empty intersection with the boundary $$\partial {\cal C}(S)=\left\{ \, \sum_{i=1}^s \lambda_ix_i \, \right| \left. \, 
\sum_{i=1}^s \lambda_i=1, \exists i \in \{1,\ldots s\} \mbox{\ such that \ } \lambda_i=0  \right \}.$$ In particular, there exists a point $P$ on the boundary $\partial {\cal C}(S)$ that lies on the segment $\overline{\sigma(S)\, \tau(S)}$  connecting $\sigma(S)$ and $\tau(S)$. Since $\overline{\sigma(S)\, \tau(S)}\subset {\rm Aff}(S)$ and $\sigma(S)\perp {\rm Aff}(S)$, we have $\sigma(S)\perp \overline{\sigma(S)\, \tau(S)}$. The point $P$ is on the boundary $\partial {\cal C}(S)$, and $\tau(S)\in {\cal C}(S)^\circ$, $\sigma(S)\notin {\cal C}(S)$, so $P\neq \sigma(S)$ and $P\neq \tau(S)$ (See {\bf Fig}. \ref{fig3}). 

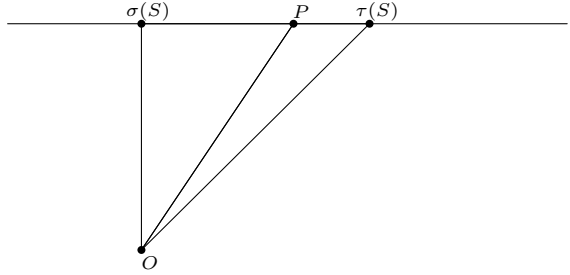
\begin{figure}[H]
\begin{center}
\setlength{\unitlength}{1mm}

\begin{tikzpicture}[line cap=round,line join=round,>=triangle 45,x=1.0cm,y=1.0cm]
\clip(-1.76,-1.39) rectangle (5.6,4.24);
\draw [domain=-1.76:5.6] plot(\x,{(--9-0*\x)/3});
\draw  (0,0)-- (0,3);
\draw  (0,3)-- (2,3);
\draw  (2,3)-- (0,0);
\draw  (0,0)-- (2,3);
\draw (2,3)-- (3,3);
\draw (3,3)-- (0,0);
\begin{scriptsize}
\fill  (0,0) circle (1.5pt);
\draw (0.11,-0.17) node {$O$};
\fill  (0,3) circle (1.5pt);
\draw (0.09,3.16) node {$\sigma(S)$};
\fill (2,3) circle (1.5pt);
\draw (2.1,3.16) node {$P$};
\fill  (3,3) circle (1.5pt);
\draw (3.1,3.16) node {$\tau(S)$};
\end{scriptsize}
\end{tikzpicture}
\end{center}
\caption{The relative positions of $\sigma(S)$, $\tau(S)$ and $P$\label{fig3}}

\end{figure}

Obviously $\|\overline{OP}\|<\|\tau(S)\|$, but this contradicts with the definition of $\tau(S)$ that it will be the shortest point in ${\cal C}(S)$. Finally, there exists $i\in \{1,\ldots , s\}$, such that $\lambda_i=0$. \quad $\blacksquare$

\end{proof}

Let $k$ be any positive integer. Define
$$A_k^{\cal W}= \{\, \tau (S)\ | \ S \mbox{\ is affinely independent and \ }\sharp(S)=k\, \}.$$
From (\ref{Aw1}) we have 
\begin{equation}\label{Aw2}
A^{\cal W}=\bigcup_{k=1}^{n+1} A_k^{\cal W}.
\end{equation}

\begin{theorem}\label{ak} Let $A\in {\mathscr S}$ and let $S=\{ x_1, \ldots ,x_s\}$ be a non-empty affinely independent subset of $A$. Suppose $\sigma(S)=\sum_{i=1}^s\nu_ix_i$. Then
\begin{enumerate}
\item if $\nu_i>0$ for all $i\in \{1,2,\ldots ,s\}$, then $\tau(S)=\sigma(S)$.
\item otherwise, if there exists $i\in \{1,2,\ldots ,s\}$ such that $\nu_i\leq 0$, then there exists $1\leq k \leq \sharp(S)$, such that $\tau(S)\in A_k^{\cal W}$.
\end{enumerate}
\end{theorem}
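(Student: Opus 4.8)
The plan is to handle the two statements separately, relying on the defining properties of $\sigma$ and $\tau$ and on the contrapositive of Lemma \ref{ms}. Throughout I use that, since $\sigma(S)\in \mathrm{Aff}(S)$ and $S$ is affinely independent, the coefficients in $\sigma(S)=\sum_{i=1}^s \nu_i x_i$ are the unique barycentric coordinates of $\sigma(S)$, so in particular $\sum_{i=1}^s \nu_i = 1$.

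For statement 1 I would argue directly. If every $\nu_i>0$, then $\sigma(S)$ is a convex combination of the $x_i$ with strictly positive weights, whence $\sigma(S)\in \mathcal{C}(S)^\circ \subset \mathcal{C}(S)$. But $\sigma(S)$ is the point of $\mathrm{Aff}(S)$ nearest to the origin and $\mathcal{C}(S)\subset \mathrm{Aff}(S)$, so $\|\sigma(S)\|\leq \|y\|$ for all $y\in \mathcal{C}(S)$. Since $\sigma(S)$ itself lies in $\mathcal{C}(S)$, it is the shortest point there, and the uniqueness that defines $\tau$ forces $\tau(S)=\sigma(S)$.

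For statement 2 I would split according to whether $\sigma(S)\in \mathcal{C}(S)$. If $\sigma(S)\in \mathcal{C}(S)$, the uniqueness argument above again gives $\tau(S)=\sigma(S)$; moreover a coordinate $\nu_j\leq 0$ together with $\nu_j\geq 0$ (forced by membership in $\mathcal{C}(S)$) yields $\nu_j=0$, so $\sigma(S)\in \mathcal{C}(S')$ for the proper subset $S'=\{x_i\ |\ \nu_i>0\}$. If instead $\sigma(S)\notin \mathcal{C}(S)$, I write $\tau(S)=\sum_i \lambda_i x_i$ with $\lambda_i\geq 0$ and $\sum_i \lambda_i=1$; the contrapositive of Lemma \ref{ms} shows the $\lambda_i$ cannot all be positive, so some $\lambda_j=0$ and $\tau(S)\in \mathcal{C}(S')$ for $S'=S\setminus\{x_j\}$. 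In both cases I obtain a proper affinely independent subset $S'\subsetneq S$ with $\tau(S)\in \mathcal{C}(S')$. Since $\mathcal{C}(S')\subset \mathcal{C}(S)$ and $\tau(S)$ is already the point of $\mathcal{C}(S)$ nearest to $O$, it is then also the nearest point of $\mathcal{C}(S')$, so $\tau(S)=\tau(S')$ by uniqueness. As $S'\subset A$ is affinely independent, $O\notin \mathcal{C}(S')\subset \mathcal{C}(S)$ (so $S'\in \mathscr{S}$), and $k:=\sharp(S')<\sharp(S)$, I conclude $\tau(S)=\tau(S')\in A_k^{\mathcal{W}}$ with $1\leq k\leq \sharp(S)$ (in fact with $k<\sharp(S)$).

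The bulk of the work is the two uniqueness arguments and the barycentric bookkeeping, both routine. The only nontrivial input is the case $\sigma(S)\notin \mathcal{C}(S)$, where the implication ``$\sigma(S)\notin \mathcal{C}(S)$ forces some barycentric coordinate of $\tau(S)$ to vanish'' is exactly the contrapositive of Lemma \ref{ms}; this is where I expect the only real difficulty to sit. The remaining care is to verify that $S'$ is nonempty and lies in $\mathscr{S}$ so that $\tau(S')$ is genuinely defined — note that $s=1$ cannot occur under the hypothesis of statement 2, since then $\nu_1=1>0$ and we are in statement 1.
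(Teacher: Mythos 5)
Your proposal is correct and follows essentially the same route as the paper's own proof: the same dichotomy on whether $\sigma(S)\in{\cal C}(S)$, the same use of Lemma \ref{ms} (and its contrapositive) to force a vanishing barycentric coordinate of $\tau(S)$, and the same passage to a proper affinely independent subset $S'$ with $\tau(S')=\tau(S)$ via uniqueness of the nearest point. The only differences are cosmetic — you delete a single $x_j$ in the case $\sigma(S)\notin{\cal C}(S)$ where the paper deletes all zero-coefficient points, and you handle the $s=1$ and nonemptiness edge cases more explicitly than the paper does.
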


\begin{proof} \quad {\it Proof of 1.}: If $\nu_i>0$ for all $i=1,2,\ldots ,s$, then $\sigma(S)\in {\cal C}(S)$. By Lemma \ref{ms}, we have $\sigma(S)=\tau(S)$.

{\it Proof of 2.}: If for some $i\in \{1,2,\ldots ,\}$ we have $\nu_i\leq 0$, then there are two possibilities.

Either $\nu_i\geq 0$ for all $i=1,2,\ldots , s$. Then we still have $\sigma(S)\in {\cal C}(S)$. By Lemma \ref{ms},  $\tau(S)=\sigma(S)=\sum_{i=1}^s\nu_ix_i$, and this is the unique convex combination of $\tau(S)$ with respect to $S$, so $\nu_i=0$ for some $i\in \{1,2,\ldots ,s\}$.

Otherwise $\nu_i<0$ for some $i\in \{1,2,\ldots ,s\}$. This time $\sigma(S)\notin {\cal C}(S)$. By the contradiction of Lemma \ref{ms}, let $\tau(S)=\sum_{i=1}^s \lambda_ix_i$, there exists $i\in \{1,2,\ldots ,s\}$, such that $\lambda_i=0$.

No matter which case happens, if  $\tau(S)=\sum_{i=1}^s \lambda_ix_i$,  there exists $i\in \{1,2,\ldots ,s\}$, such that $\lambda_i=0$. So if we take $S'=\{x_i \ | \ \lambda_i\neq 0\}$, then $S'\subsetneq S$ and $\sharp(S')<\sharp(S)$.

As a subset of an affinely independent set $S$, the set $S'$ is also affinely independent. But $\tau(S)\neq 0$ implies that $S'\neq \varnothing$. Thus $S'\in {\mathscr S}$. Since $S'\subset S$ we have $\|\tau(S')\|\geq \|\tau(S)\|$. By the definition of $\tau(S')$, in the polytope ${\cal C}(S')$, since $\tau(S)\in {\cal C}(S')$ we have $\|\tau(S')\|\leq \|\tau(S)\|$. Thus $\tau(S')=\tau(S)$. Let $k=\sharp(S')$, we have $1\leq k < \sharp(S)$ and $\tau(S)\in A_k^{\cal W}$. \quad $\blacksquare$
\end{proof}

Next, let 
$$A_k^{\cal B}=\{\, \tau(S) \ | \ S\in A_k^{\cal W}, \sigma(S) \mbox{\ satisfies (1) in Theorem \ref{ak}}\}.$$
By Theorem \ref{ak}, we have
\begin{equation}\label{akb}
A^{\cal B}=A^{\cal W}=\bigcup_{k=1}^{n+1}A_k^{\cal W}=\bigcup_{k=1}^{n+1}A_k^{\cal B}.
\end{equation}

Let $S=\{x_1,\ldots , x_{n+1}\}\in {\mathscr S}$ be an affinely independent set such that $\sharp(S)=n+1$. Then ${\cal C}(S)$ is homeomorphic to the unit ball in ${\mathbb R}^n$. By the same argument of the proof of Lemma \ref{ms}, we have $\|\tau(S)\|\in \partial {\cal C}(S)$. That is, if $\tau(S)=\sum_{i=1}^{n+1}\lambda_ix_i$, then at least one of the $\lambda_i$'s is zero for $i=1,\ldots n+1$. This implies that $A_{n+1}^{\cal B}=\varnothing$. Finally we have
\begin{equation}\label{akb1}
A^{\cal B}=\bigcup_{k=1}^{n}A_k^{\cal B}
\end{equation} 

In sum, we design an algorithm to compute $A_k^{\cal B}$ for $k=1,\ldots n$ according to (\ref{akb1}).

\begin{algorithm}\label{Algorithm1}
\begin{flushleft}
{\sf \

Input:

\quad \quad     \quad  A finite set $A$ in ${\mathbb R}^n$, the points of $A$ are represented as column vectors $x=(x_1,\ldots ,x_n)^T$. 

$k=1$
$$A_1^{\cal B}=A$$
For $k=2,\ldots ,n$\, :
\begin{align*}
& D_k=\{\, S\in {\cal P}(A) \ | \ \sharp(S)=k\, \}; \\
& A_k^{\cal B}=\varnothing
\end{align*}
Define a total order ``$\prec$" on $D_k$, and write $D_k$ as 
$$D_k=\{S_1,\ldots , S_q\}$$ where $q=\sharp(D_k)=\begin{pmatrix} \sharp(A) \\ k
\end{pmatrix}
$.

\qquad \qquad    For $i=1,\ldots ,q$,let $S_i=\{x_i^{(1)}, \ldots , x_i^{(s)}\}$, take the matrix 
$$B_i=(x_i^{(j)}-x_i^{(1)}\ | \ j=2,\ldots ,s)$$
\qquad \qquad \qquad If\quad  $\det (B_i)=0$

\qquad \qquad \qquad \qquad then \quad $A_k^{\cal B}=A_k^{\cal B},$

\qquad \qquad $i=i+1;$

\qquad \qquad \qquad Else 
$$
x_i^*=\big(\, I-B_i(B_i^TB_i)^{-1}B_i^T\, \big)\, x_i^{(1)}
$$
\qquad \qquad \qquad \quad expand $x^*_i$ with respect to $S_i$ as 
$$ x_i^*=\sum_{j=1}^s \lambda_jx_i^{(j)}$$

\qquad \qquad \qquad \qquad \qquad  If $\forall j=1,\ldots ,s$, $\lambda_j>0$

\qquad \qquad \qquad \qquad \qquad \qquad then\qquad  $A_k^{\cal B}=A_k^{\cal B}\cup \{x_i^*\},$

\qquad \qquad $i=i+1;$

\qquad \qquad \qquad \qquad \qquad  Else \qquad $A_k^{\cal B}=A_k^{\cal B}$

\qquad \qquad $i=i+1;$

Return $A_k^{\cal B}$

For $k=1,\ldots ,n$, 

\qquad Output $A_k^{\cal B}$
}
\end{flushleft}
\end{algorithm}

We will give examples of this algorithm for the geometric invariant theory stability of projective hypersurfaces later in this work.

\section{Moment Maps and Minimal Combinations of   Weights of Hypersurfaces}\label{MMMCWH}

This section is according to \cite{MFK}, \cite{Kirwan} and \cite{Ness}.

First we introduce some notations. Let $R_n={\mathbb C}[\,x_1,\ldots ,x_n\,]$ be the ring of complex polynomials of $n$ variables $x_1,\ldots ,x_n$. Let ${\mathbb Z}^n_+=\{(i_1,\ldots ,i_n)\in {\mathbb Z}^n \, | \, i_l\geq 0,\, l=1,\ldots ,n\}$. For any $\alpha = (i_1,\ldots ,i_n)\in {\mathbb Z}^n_+$, let ${\bf x}^\alpha=x_1^{i_1}\ldots x_n^{i^n}$ be the monomial of ${\bf x}=(x_1,\ldots ,x_n)$. Then any element in $R_n$ could be written as
$$f=\sum_{(i_1,\ldots ,i_n)\in {\mathbb Z}^n_+}c_{i_1,\ldots ,i_n}x_1^{i_1}\ldots x_n^{i^n}=\sum_{\alpha \in {\mathbb Z}^n_+}c_{\alpha}{\bf x}^{\alpha}$$
where $c_{i_1,\ldots ,i_n}=c_{\alpha}\in {\mathbb C}$ for $\alpha \in {\mathbb Z}^n_+$ and $c_{\alpha}=0$ for all but finitely many $\alpha \in {\mathbb Z}^n_+$. Let $|\alpha|=i_0+\cdots +i_n$ for $\alpha = (i_1,\ldots ,i_n)$. Let $R_n^d$ be the set of homogeneous polynomials of degree $d$ in $R_n$. Let ${\mathscr W}_d=\{\alpha\in {\mathbb Z}^n_+ \, | \, |\alpha|=d\}$, then an element in $R_n^d$ is of the form
\begin{equation}\label{rnd}
f=f(x_1,\ldots ,x_n)=\sum_{i_1+\cdots +i_n=d}c_{i_1,\ldots ,i_n}x_1^{i_1}\ldots x_n^{i_n}=f({\bf x})=\sum_{\alpha \in {\mathscr W}_d}c_{\alpha}{\bf x}^{\alpha}
\end{equation}
where $c_{i_1,\ldots ,i_n}=c_{\alpha}\in {\mathbb C}$ for $\alpha = (i_1,\ldots ,i_n)\in {\mathscr W}_d$. 

Let ${\mathscr M}_d=\{\,{\bf x}^{\alpha} \, | \, \alpha\in {\mathscr W}_d \, \}=\{\, x_1^{i_1}\ldots x_n^{i_n} \, | \, i_1+\cdots i_n=d\, \}$ be the set of monomials of degree $d$, then $R_n^d$ is the complex vector space of dimension $\binom{n+d-1}{d}$ which is spanned by the basis ${\mathscr M}_d$.
Let ${\bf P}_n^d={\mathbb P}(R_n^d)$ be the projective space of dimension $\binom{n+d-1}{d}-1$ whose affine cone is $R_n^d\backslash \{0\}$, for $f\in R_n^d\backslash \{0\}$, we also denote the image of $f\in {\bf P}_n^d $ by $f$, and say $f\in {\bf P}_n^d$.

Let $M_n({\mathbb C})$ be the ring of $n\times n$ complex square matrices. Denote $A=(a_{ij})_{n\times n}\in M_n(\mathbb C)$ as the matrix whose $(i,j)$-th entry is $a_{ij}\in {\mathbb C}$ for $i,j=1,\ldots ,n$. Consider the {\bf right} action of the general linear group ${\rm GL}(n,{\mathbb C})=\{\, A\in M_{n\times n}({\mathbb C})\, | \, \det A\neq 0 \, \}$ on ${\bf P}_n^d$. That is, if we substitute the linear transformation ${\bf x}\mapsto {\bf x}\, A$, or equivalently
$$x_i\longmapsto \sum_{i=1}^n x_ia_{ij} \quad \mbox{for} \quad i=1,\ldots ,n$$
into (\ref{rnd}), and let 
\begin{equation}\label{Af}\begin{split}
&A.f=f(\,\sum_{i=1}^n x_1a_{1j},\ldots  \sum_{i=1}^n x_na_{nj}\,)
\\ &=\sum_{i_1+\cdots +i_n=d}c_{i_1\ldots i_n}\left(\sum_{i=1}^n x_1a_{1j}\right)^{i_1}\cdots \left(\sum_{i=1}^n x_na_{nj}\right)^{i_n},\end{split}
\end{equation}
then the parenthesizes on the right hand side of (\ref{Af}) are linear transformations. If we expand them, the total degree will not change. Thus we have $A.f\in {\bf P}_n^d$, and $f\mapsto A.f, \forall A\in {\rm GL}(n,{\mathbb C})$ defines a ${\rm GL}(n,{\mathbb C})$-action on ${\bf P}_n^d$, we call it the right action. 

With this action, we can define the right action of any subgroup of ${\rm GL}(n, {\mathbb C})$ on ${\bf P}_n^d$. In particular, we consider right actions of the special linear group ${\rm SL}(n,{\mathbb C})=\{A\in {\rm GL}(n, {\mathbb C}) \, | \, \det A=1\}$ and the special unitary group ${\rm SU}(n)=\{A\in {\rm SL}(n, {\mathbb C}) \ | \ A^\dagger A=I\}$ on ${\bf P}_n^d$. Here $I$ is the $n\times n$ identity matrix, and if $A=(a_{ij})_{n\times n}\in M_n({\mathbb C})$, then $A^\dagger:=(\overline{a_{ji}})_{n\times n}\in {\mathbb C}$ is the conjugate transpose matrix of $A$, so ${\rm SU}(n)$ is the group of unitary matrices of determinant 1. For these linear groups, the corresponding Lie algebras are ${\frak g}{\frak l}(n,{\mathbb C})=M_{n\times n}({\mathbb C})$, ${\frak s}{\frak l}(n,{\mathbb C})=\{A\in{\frak g}{\frak l}(n,{\mathbb C}) \, | \, {\rm tr}(A)=0 \}$ where ${\rm tr}(A)$ is the trace of the matrix $A$, and ${\frak s}{\frak u}(n)=\{A\in{\frak s}{\frak l}(n,{\mathbb C}) \, | \, A^\dagger + A=0 \}$ , the set of skew-hermitian matrices of trace 0.
\begin{lemma}[See \cite{Ness}]\label{innerproduct} Define an inner product ``$\langle \, , \, \rangle$" on $R_n^d$ as:
\begin{enumerate}
\item The set of monomials ${\mathscr M}_d$ forms an orthogonal basis of $R_n^d$, so $\langle {\bf x}^\alpha, {\bf x}^\beta \rangle=0$ if $\alpha \neq \beta, \alpha,\beta \in {\mathscr W}_d$.
\item Let $\alpha=(i_1,\ldots i_n)\in {\mathscr W}_d$, then
$$\langle {\bf x}^\alpha, {\bf x}^\alpha \rangle= \|x_1^{i_1}\cdots x_n^{i_n}\|^2=\frac{i_1!\cdots i_n!}{d!}.$$ 
\end{enumerate}
Then this inner product is ${\rm SL}(n,{\mathbb C})$(also ${\rm SU}(n)$)-invariant. It induces an ${\rm SL}(n,{\mathbb C})$(also ${\rm SU}(n,{\mathbb C})$)-invariant inner product on ${\bf P}_n^d$.
\end{lemma}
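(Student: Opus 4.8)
The plan is to recognize the stated form as the canonical Hermitian inner product that $\mathrm{Sym}^d$ of the standard representation carries, and to deduce invariance from functoriality rather than from the messy expansion (\ref{Af}). Write $W=R_n^1$ for the space of linear forms, spanned by $x_1,\ldots,x_n$, so that $R_n^d=\mathrm{Sym}^d W$. On $W$ the right action restricts to the linear representation $A\mapsto(\text{the map }{\bf x}\mapsto{\bf x}A)$, and for $A\in\mathrm{SU}(n)$ this is a unitary transformation of $W$. First I would equip $W$ with the Hermitian inner product declaring $x_1,\ldots,x_n$ orthonormal; since this basis is orthonormal and $A\in\mathrm{SU}(n)$ acts by a unitary matrix, the inner product on $W$ is manifestly $\mathrm{SU}(n)$-invariant.

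Next I would transport this to $R_n^d=\mathrm{Sym}^d W$ by the \emph{functorial} induced inner product: put on $W^{\otimes d}$ the tensor-product Hermitian form (for which the pure tensors $x_{j_1}\otimes\cdots\otimes x_{j_d}$ are orthonormal), identify $\mathrm{Sym}^d W$ with the symmetric tensors via the symmetric product $v_1\cdots v_d:=\pi(v_1\otimes\cdots\otimes v_d)$ with $\pi=\frac{1}{d!}\sum_{\sigma\in S_d}\sigma$, and restrict. Invariance is then automatic and needs no computation: a unitary $A$ on $W$ induces a unitary $A^{\otimes d}$ on $W^{\otimes d}$, which commutes with $\pi$ and hence preserves $\mathrm{Sym}^d W$ with the restricted form; this is exactly $\langle A.f,A.g\rangle=\langle f,g\rangle$ for $A\in\mathrm{SU}(n)$. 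The only genuine calculation left is to check that this canonical form is the one written in the lemma. For ${\bf x}^\alpha$ with $\alpha=(i_1,\ldots,i_n)$, the symmetric tensor $\pi(T)$, where $T$ has $i_k$ copies of $x_k$, is a scalar multiple of the sum over the distinct rearrangements of $T$; counting that the $S_d$-stabilizer of $T$ has order $i_1!\cdots i_n!$ and its orbit has $\frac{d!}{i_1!\cdots i_n!}$ elements gives $\langle{\bf x}^\alpha,{\bf x}^\alpha\rangle=\frac{i_1!\cdots i_n!}{d!}$, while distinct monomials use disjoint orthonormal tensors and are therefore orthogonal. This reproduces the two defining conditions exactly.

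As an independent check that also makes the invariance transparent, I would note that the same form (up to the overall constant $1/d!$, which does not affect invariance) is the restriction to degree $d$ of the Bargmann--Fock inner product $\frac{1}{\pi^n}\int_{{\mathbb C}^n} f(z)\overline{g(z)}\,e^{-|z|^2}\,dV(z)$, since the Gaussian moment identity $\frac{1}{\pi^n}\int z^\alpha\overline{z^\beta}e^{-|z|^2}dV=i_1!\cdots i_n!\,\delta_{\alpha\beta}$ holds. For this model, $\mathrm{SU}(n)$-invariance (indeed $\mathrm{U}(n)$-invariance) is immediate from the change of variables $w={\bf z}A$: one has $|{\bf z}A|^2=|{\bf z}|^2$ because $AA^\dagger=I$, and the real Jacobian is $|\det_{\mathbb C}A|^2=1$, so the weighted measure $e^{-|z|^2}dV$ is preserved. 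Finally, since the form is positive-definite Hermitian, it induces the Fubini--Study metric on ${\bf P}_n^d={\mathbb P}(R_n^d)$, and every transformation unitary for $\langle\,,\,\rangle$ acts on ${\bf P}_n^d$ by an isometry, so $\mathrm{SU}(n)$ preserves the induced metric.

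I expect the main obstacle to be bookkeeping rather than conceptual: pinning down the normalization $1/d!$ under a single consistent convention for the symmetric product and for the right action on row vectors, and placing the complex conjugation so that ${\bf x}\mapsto{\bf x}A$ intertwines the two inner products without an extra factor. One further point deserving care is the literal reading of the $\mathrm{SL}(n,{\mathbb C})$-invariance clause: a positive-definite Hermitian form cannot be preserved by the non-compact group $\mathrm{SL}(n,{\mathbb C})$, so I would state the invariance as being under the maximal compact subgroup $\mathrm{SU}(n)=\mathrm{SL}(n,{\mathbb C})\cap\mathrm{U}(R_n^d)$, which is precisely the invariance that the Kempf--Ness and Ness framework of the surrounding sections requires.
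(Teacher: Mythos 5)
Your proposal is correct, but there is no paper proof to compare it against: the paper states Lemma \ref{innerproduct} purely as a quotation from \cite{Ness} and supplies no argument of its own, so your write-up fills a genuine gap rather than paralleling or diverging from an existing one. The route you take is sound: realizing $R_n^d=\mathrm{Sym}^d W$ inside $W^{\otimes d}$ with the tensor-power Hermitian form, computing by the orbit--stabilizer count that the symmetrization of the pure tensor underlying ${\bf x}^\alpha$ has norm squared $\left(\frac{i_1!\cdots i_n!}{d!}\right)^2\cdot\frac{d!}{i_1!\cdots i_n!}=\frac{i_1!\cdots i_n!}{d!}$ while distinct monomials involve disjoint orthonormal pure tensors, and then getting invariance for free because a unitary $A$ on $W$ induces a unitary $A^{\otimes d}$ on $W^{\otimes d}$ commuting with the symmetrizer $\pi$. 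The Bargmann--Fock integral is a legitimate independent verification, since the Gaussian moment identity reproduces exactly the same pairing up to the harmless factor $1/d!$. Your final caveat is not merely a convention issue but an actual correction to the statement as printed: a positive-definite Hermitian form cannot be invariant under the noncompact group $\mathrm{SL}(n,{\mathbb C})$ --- for $A=\mathrm{diag}(t,1/t,1,\ldots,1)$ with real $t>1$ one has $A.x_1^d=t^dx_1^d$, hence $\|A.x_1^d\|=t^d\|x_1^d\|\neq\|x_1^d\|$ --- so the invariance asserted in the lemma holds only for the maximal compact subgroup $\mathrm{SU}(n)$, which is what \cite{Ness} actually proves and what the moment-map formalism in Section \ref{MMMCWH} requires; the induced statement on ${\bf P}_n^d$ should likewise be read as $\mathrm{SU}(n)$-invariance of the induced Fubini--Study structure.
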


\begin{theorem}[See \cite{Ness}]\label{moment} Let $f=\sum_{\alpha \in {\mathscr W}_d} c_\alpha {\bf x}^\alpha \in {\bf P}_n^d$, define the {\bf Hessian}  $H(f)$ of $f$ as the hermitian matrix whose $(i,j)$-th entry is 
\begin{equation}\label{Hesse}
H(f)_{ij}=\frac{1}{d \|f\|^2} \left\langle \frac{\partial f}{\partial x_i}, \frac{\partial f}{\partial x_i} \right\rangle.
\end{equation}
Then the map\begin{equation}
\begin{array}{cccc}
{\frak m}: & {\bf P}_n^d & \longrightarrow & \sqrt{-1}{\frak s}{\frak u}(n ,\mathbb C) \\
& f & \longmapsto & \displaystyle{2\left(H(f)-\frac{d}{n}I\right)}
\end{array}
\end{equation}
is a {\bf moment} map of ${\bf P}_n^d$ under the ${\rm SU}(n,{\mathbb C})$-action.
\end{theorem}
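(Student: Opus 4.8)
The plan is to verify the two defining properties of a moment map for the Fubini--Study symplectic form $\omega$ on ${\bf P}_n^d={\mathbb P}(R_n^d)$: the Hamiltonian condition $d\langle {\frak m},\xi\rangle=\iota_{X_\xi}\omega$ for every $\xi\in{\frak s}{\frak u}(n)$, and ${\rm SU}(n)$-equivariance. First I would recall the intrinsic formula for the moment map of a unitary representation on a projective space: writing $\xi.f$ for the infinitesimal action of $\xi\in{\frak s}{\frak u}(n)$ on $f\in R_n^d$, the Fubini--Study moment map satisfies $\langle{\frak m}(f),\xi\rangle=\frac{1}{2i}\cdot\frac{\langle\xi.f,f\rangle}{\|f\|^2}$ up to the conventional normalization (see \cite{Kirwan}, \cite{Ness}). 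Differentiating the right action ${\bf x}\mapsto{\bf x}A$ at the identity presents the infinitesimal action as the first-order differential operator $\xi.f=\sum_{i,j}\xi_{ij}\,x_i\,\frac{\partial f}{\partial x_j}$, so the whole problem reduces to evaluating $\langle\xi.f,f\rangle$ against the inner product of Lemma \ref{innerproduct} and identifying it with ${\rm tr}(\xi H(f))$.

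The key computational step is an integration-by-parts identity for the weighted inner product of Lemma \ref{innerproduct}, namely that multiplication by $x_i$ and differentiation $\partial/\partial x_i$ are adjoint up to the factor $d$:
\begin{equation}\label{ibp}\left\langle x_i\,g,\,h\right\rangle=\frac{1}{d}\left\langle g,\,\frac{\partial h}{\partial x_i}\right\rangle\end{equation}
for $g$ homogeneous of degree $d-1$ and $h$ of degree $d$. I would prove (\ref{ibp}) by testing on monomials: recognizing $\langle{\bf x}^\alpha,{\bf x}^\alpha\rangle=\alpha!/d!$ as the apolar (Fischer) inner product rescaled on each graded piece, the identity $(\alpha+e_i)!=(\alpha_i+1)\,\alpha!$ produces exactly the constant $1/d$ once the degree shift between $g$ and $h$ is tracked. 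Applying (\ref{ibp}) with $g=\partial f/\partial x_j$ and $h=f$, and using the Hermitian symmetry $H(f)_{ij}=\overline{H(f)_{ji}}$ (here the second derivative in (\ref{Hesse}) must be read as $\partial f/\partial x_j$, as Hermiticity requires), I get $\langle x_i\,\partial_{x_j}f,f\rangle=\frac{1}{d}\langle\partial_{x_j}f,\partial_{x_i}f\rangle=\|f\|^2 H(f)_{ji}$, whence $\langle\xi.f,f\rangle=\sum_{i,j}\xi_{ij}\langle x_i\,\partial_{x_j}f,f\rangle=\|f\|^2\,{\rm tr}(\xi H(f))$. This is precisely the Fubini--Study formula up to the scalar that produces the factor $2$ in the statement.

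It remains to confirm that ${\frak m}(f)$ genuinely lands in $\sqrt{-1}{\frak s}{\frak u}(n)$ and to establish equivariance. Hermiticity of ${\frak m}(f)$ is immediate from that of $H(f)$; for the trace, Euler's relation $\sum_i x_i\,\partial f/\partial x_i=d\,f$ combined with (\ref{ibp}) gives $\sum_i\|\partial f/\partial x_i\|^2=d^2\|f\|^2$, so ${\rm tr}\,H(f)=d$ and hence ${\rm tr}(H(f)-\frac{d}{n}I)=0$. Thus the subtraction of $\frac{d}{n}I$ is exactly the projection onto the traceless part, and since every $\xi\in{\frak s}{\frak u}(n)$ is traceless it leaves the pairing ${\rm tr}(\xi\,{\frak m}(f))$ unchanged. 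For equivariance, the chain rule gives $\partial_{x_i}(A.f)=\sum_k a_{ik}\,A.(\partial_{x_k}f)$, and combining this with the ${\rm SU}(n)$-invariance of $\langle\,,\,\rangle$ and of $\|\cdot\|$ (Lemma \ref{innerproduct}, applied also in degree $d-1$) yields $H(A.f)=A\,H(f)\,A^\dagger=A\,H(f)\,A^{-1}$, i.e. ${\frak m}(A.f)={\rm Ad}_A\,{\frak m}(f)$, which under the trace identification ${\frak s}{\frak u}(n)^\vee\cong\sqrt{-1}{\frak s}{\frak u}(n)$ is the equivariance condition.

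I expect the main obstacle to be twofold. First, pinning down the adjointness identity (\ref{ibp}) with the correct power of $d$: the inner product of Lemma \ref{innerproduct} is normalized only within a single graded piece, so the degree shift between $f$ and $\partial f/\partial x_i$ must be handled with care. Second, reconciling all the normalization conventions --- the $\frac{1}{2i}$ in the Fubini--Study moment map, the factor $2$ in the definition of ${\frak m}$, and the choice of pairing identifying ${\frak s}{\frak u}(n)^\vee$ with $\sqrt{-1}{\frak s}{\frak u}(n)$ --- so that the Hamiltonian condition holds exactly rather than merely up to a universal constant. The purely symplectic content (that the Fubini--Study formula defines a moment map) is standard and can be quoted; the work specific to this paper is the explicit translation of that formula into the Hessian $H(f)$.
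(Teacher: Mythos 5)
The paper itself contains no proof of Theorem \ref{moment}: it is quoted from \cite{Ness}, and the remark immediately following it states that the theorem ``should be regarded as the definition of the moment map here.'' So there is no internal argument for your proposal to match or diverge from; what you have done is reconstruct the proof that the paper delegates to the cited source, and as a reconstruction it is essentially sound. Your adjointness identity is the right key lemma: on monomials, $\langle x_i\,{\bf x}^\beta,\,{\bf x}^{\beta+e_i}\rangle=(\beta_i+1)\beta!/d!$ while $\langle {\bf x}^\beta,\,\partial{\bf x}^{\beta+e_i}/\partial x_i\rangle=(\beta_i+1)\beta!/(d-1)!$, so multiplication by $x_i$ and $\frac{1}{d}\,\partial/\partial x_i$ are indeed adjoint for the inner product of Lemma \ref{innerproduct}, and from this your three computations --- $\langle\xi.f,f\rangle=\|f\|^2\,{\rm tr}(\xi H(f))$, then ${\rm tr}\,H(f)=d$ via Euler's relation, and $H(A.f)=AH(f)A^\dagger$ via the chain rule and unitary invariance --- all check out. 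You were also right to read the second entry of (\ref{Hesse}) as $\partial f/\partial x_j$: as printed the formula is a typo (it would make every row of $H(f)$ constant, hence $H(f)$ non-hermitian), and the paper's own computation in its Example, where the two slots carry the distinct indices $l$ and $k$, confirms your reading. The only substantive incompleteness is the one you flag yourself: the Hamiltonian condition is discharged by citing the standard Fubini--Study moment-map formula, and the overall normalization --- the $\frac{1}{2i}$, the factor $2$ in ${\frak m}$, and the pairing used to identify ${\frak s}{\frak u}(n)^\vee$ with $\sqrt{-1}\,{\frak s}{\frak u}(n)$ --- is never reconciled. Since ${\rm tr}(\xi H(f))={\rm tr}\bigl(\xi(H(f)-\frac{d}{n}I)\bigr)$ for traceless $\xi$, your argument as written only shows that the moment map is a positive scalar multiple of $H(f)-\frac{d}{n}I$; exhibiting the symplectic form for which that scalar is exactly $2$ is what would separate your sketch from a complete proof. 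The paper, treating the statement as a definition, never needs that constant to be verified, but downstream statements such as Lemma \ref{diagfbeta} (where $M(f)=\|\beta\|$) do depend on the normalization being the one fixed here.
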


\begin{remark} The norm and the inner product on the right hand side of (\ref{Hesse}) are defined as in Lemma \ref{innerproduct}. We will not introduce the general theory of symplectic reduction and its relation to geometric invariant theory, so Theorem \ref{moment} should be regard as the definition of the moment map here. For more details on symplectic geometry, see \cite{LOSG}. But we should notice that the original definition of the moment map is a map ${\frak m}:{\bf P}_n^d\rightarrow {\frak s}{\frak u}(n)^\vee$ where ${\frak s}{\frak u}(n)^\vee = {\rm Hom}_{\mathbb C}({\frak s}{\frak u}(n), {\mathbb C})$ is the dual vector space of ${\frak s}{\frak u}(n)$. Since ${\frak s}{\frak l}(n,{\mathbb C})$ is semi-simple, the Killing form $B(\alpha, \beta)={\rm Re}({\rm tr}(\alpha\beta)) \ \forall \alpha, \beta \in {\frak s}{\frak l}(n,{\mathbb C}) $ is non-degenerate and gives an identification ${\frak s}{\frak l}(n,{\mathbb C})\rightarrow {\frak s}{\frak l}(n,{\mathbb C})^\vee$, $\alpha \mapsto B(\alpha , \cdot )/B(\alpha ,\alpha)$. Under this isomorphism, a skew-hermitian matrix will be identified to a hermitian matrix, so ${\frak s}{\frak u}(n)^\vee \simeq \sqrt{-1}{\frak s}{\frak u}(n)$ where $\sqrt{-1}{\frak s}{\frak u}(n)$ is the set of hermitian matrices of trace 0.
\end{remark}

\begin{example}[see \cite{Ness}, Section 10]\label{Example} Let $\alpha = (i_1,\ldots ,i_n)\in {\mathscr W}_d$, and ${\bf x}^\alpha=x_1^{i_1}\cdots x_n^{i_n}\in {\mathscr M}_d$, then
$H({\bf x}^\alpha)={\rm diag}(i_1,\ldots i_n)$ where ${\rm diag}(i_1,\ldots i_n)$ is the diagonal matrix $$\begin{pmatrix}
i_1 & & & \\ & i_2 & & \\ & & \ddots & \\ &&& i_n
\end{pmatrix}.
$$
Thus, let \begin{equation}\label{A} {\mathbb A}=\{\, {\frak m}({\bf x}^\alpha) \, | \, x^\alpha \in {\mathscr M}_d\,\}\end{equation}
then ${\mathbb A}=\{\, {\rm diag}(i_1-d/n, \ldots ,i_n-d/n)\ | \ (i_1,\ldots ,i_n)\in {\mathscr W}_d\, \}$.
\end{example}

\begin{proof} Let $\delta_{lk}$ be the Kronecker notation of $l$ and $k$. 
\begin{align*}
H({\bf x}^\alpha)_{lk} =& H(x_1^{i_1}\cdots x_n^{i_n})_{lk} \\
=& \frac{1}{d\|{\bf x}\|^\alpha} \left\langle \frac{\partial x_1^{i_1}\cdots x_n^{i_n}}{x_l}, \frac{\partial x_1^{i_1}\cdots x_n^{i_n}}{x_k}\right\rangle \\
=& \frac{1}{d} \cdot \frac{1}{\frac{i_1!\cdots i_n!}{d!}} \left\langle i_l x_1^{i_1}\cdots x_l^{i_l-1}\ldots x_n^{i_n} , i_k x_1^{i_1}\cdots x_l^{i_k-1}\ldots x_n^{i_n}\right\rangle \\
=& \frac{(d-1)!}{i_1!\cdots i_n!} \delta_{lk}i_li_k \left\langle  x_1^{i_1}\cdots x_l^{i_l-1}\ldots x_n^{i_n} , x_1^{i_1}\cdots x_l^{i_k-1}\ldots x_n^{i_n}\right\rangle \\
=& \frac{(d-1)!}{i_1!\cdots i_n!} i_l\cdot i_l\delta_{lk} \frac{i_1!\cdots (i_k-1)! \cdots i_n!}{(d-1)!} \\
=& \frac{(d-1)!}{i_1!\cdots i_n!} i_l\cdot \delta_{lk} \frac{i_1!\cdots i_k\cdot (i_k-1)! \cdots i_n!}{(d-1)!} \\
=& i_l\delta_{lk} \qquad \blacksquare
\end{align*}
\end{proof}

In general, let $(V,\|\cdot \|)$ be a metric vector space. For any $x,y\in V$, let $d(x,y)=\|x-y\|$ be the distance from $x$ to $y$. Let $U$ be a subset of $V$, then for any $x\in V$, the distance from $x$ to $U$ is defined as ${\sf d}(x,U)= \inf_{y\in U}d(x,y)$. Now, define the {\bf signed distance} $\overrightarrow{\sf d}(x,U)$ from $x$ to $U$ as
\begin{equation}\label{signed}\overrightarrow{\sf d}(x,U)=\begin{cases}
{\sf d}(x,U)\quad \mbox{ if } x\notin U \\
-{\sf d}(x,U^c)\quad  \mbox{ if } x\in U
\end{cases}\end{equation}
where $U^c=V\backslash U$. For our case, the space ${\frak s}{\frak u}(n, {\mathbb C})$ is equipped with a metric by the Killing form, and let ${\sf d}$ be the distance according to this metric, and $\overrightarrow{\sf d}$ be the signed distance of $\sf d$. For $f\in {\bf P}_n^d$, let ${\cal O}(f)$ be the ${\rm SL}(n,{\mathbb C})$-orbit of $f$ in ${\bf P}_n^d$, and let $\overline{{\cal O}(f)}$ be the closure of ${\cal O}(f)$ in ${\bf P}_n^d$.
\begin{theorem}[See \cite{Dolgachev-Hu}] Let $O$ be the $0$ element of ${\frak s}{\frak u}(n)$. Define the real-valued function
$$\begin{array}{cccc}
M: & {\bf P}_n^d & \longrightarrow & {\mathbb R} \\
   & f & \longmapsto & {\overrightarrow{\sf d}}(O,\overline{{\cal O}(f)}\, ).
\end{array}$$
Then $M$ equals to the Hilbert-Mumford numerical criterion of geometric invariant theory stability function of the ${\rm SL}(n,{\mathbb C})$-action on ${\bf P}_n^d$. 
\end{theorem}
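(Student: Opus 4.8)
The plan is to deduce this theorem as the specialization, to the concrete data $G={\rm SL}(n,{\mathbb C})$, $V=R_n^d$ and $K={\rm SU}(n)$, of the general Kempf--Ness--Kirwan correspondence already recalled in the introduction. There the numerical criterion function was stated to satisfy $M(x)=\overrightarrow{\sf d}(0,{\frak m}(\overline{K.x}))$ (see \cite{Ness}, Lemma 3.1 and Lemma 3.2, or \cite{Dolgachev-Hu}, Theorem 2.1.9), where $K$ is the maximal compact subgroup of the acting complex reductive group and ${\frak m}$ is the associated moment map. Since $O\in {\frak s}{\frak u}(n)$ while $\overline{{\cal O}(f)}\subset {\bf P}_n^d$, the right-hand side of the theorem must be read as $\overrightarrow{\sf d}(O,{\frak m}(\overline{{\cal O}(f)}))$, and the essential task is to check that every ingredient of the abstract formula is realized by the objects constructed in this section.

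First I would record the group-theoretic input: ${\rm SU}(n)$ is a maximal compact subgroup of ${\rm SL}(n,{\mathbb C})$, and the complexification of ${\frak s}{\frak u}(n)$ is ${\frak s}{\frak l}(n,{\mathbb C})$, so the hypotheses of the cited results hold verbatim. Next I would identify the target: by the remark following Theorem \ref{moment}, the Killing form $B(\alpha,\beta)={\rm Re}({\rm tr}(\alpha\beta))$ is nondegenerate on ${\frak s}{\frak l}(n,{\mathbb C})$ and yields ${\frak s}{\frak u}(n)^\vee\simeq \sqrt{-1}{\frak s}{\frak u}(n)$. Under this identification the abstract moment map becomes exactly the map ${\frak m}$ of Theorem \ref{moment}, and the distance ${\sf d}$ from which $\overrightarrow{\sf d}$ is built (as defined just before (\ref{signed})) is precisely the one induced by $B$. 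This pins down both the domain and the metric in which the signed distance is measured.

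The step that needs genuine care is matching the orbit closures. The abstract formula uses ${\frak m}(\overline{K.x})$, the moment-map image of the (Zariski) closure of the compact orbit, whereas the statement is phrased through the closure $\overline{{\cal O}(f)}$ of the full ${\rm SL}(n,{\mathbb C})$-orbit. I would argue that the Zariski closure of ${\rm SU}(n).f$ coincides with $\overline{{\cal O}(f)}$, because ${\rm SL}(n,{\mathbb C})$ is the complexification of ${\rm SU}(n)$ and the orbit maps are algebraic, so the two moment-map images, and hence the two signed distances, agree. With this identification the right-hand side of the theorem becomes literally $\overrightarrow{\sf d}(O,{\frak m}(\overline{K.f}))=M(f)$, which is the cited formula.

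The main obstacle I anticipate is bookkeeping rather than conceptual: one must verify that the sign convention of (\ref{signed}) --- positive when $O\notin {\frak m}(\overline{{\cal O}(f)})$, the unstable case, and negative when $O$ lies in the interior, the stable case --- agrees with the convention that the Hilbert--Mumford function is positive on unstable points and nonpositive on semistable ones, and that the normalization of the Killing-form metric on $\sqrt{-1}{\frak s}{\frak u}(n)$ is the one for which the cited inequalities become equalities. Once these conventions are aligned, the theorem follows directly from the cited general results, with no further computation required.
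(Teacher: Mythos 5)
Your proposal is correct and takes essentially the same route as the paper: the paper states this theorem purely as a citation of \cite{Dolgachev-Hu}, relying on the general formula $M(x)=\overrightarrow{\sf d}(0,{\frak m}(\overline{K.x}))$ already quoted in the introduction, and offers no further argument. Your write-up simply makes the specialization explicit --- ${\rm SU}(n)$ as maximal compact of ${\rm SL}(n,{\mathbb C})$, the Killing-form identification ${\frak s}{\frak u}(n)^\vee\simeq \sqrt{-1}\,{\frak s}{\frak u}(n)$, the coincidence of the Zariski closures of the ${\rm SU}(n)$- and ${\rm SL}(n,{\mathbb C})$-orbits (valid since the compact real form is Zariski dense in its complexification), and the sign conventions --- and all of these steps are sound.
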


\begin{remark} According to geometric invariant theory \cite{MFK}, the projective variety ${\bf P}_n^d$ will be decomposed as a disjoint union ${\bf P}_n^d=\left({\bf P}_n^d\right)^{\rm ss} \sqcup \left({\bf P}_n^d\right)^{\rm us} $ with respect to the right action\footnote{In fact, once this action is fixed, we have chosen the linearization, and the geometric invariant theory stability is determined by the choice of linearization \cite{MFK}.} by ${\rm SL}(n,{\mathbb C})$, where $\left({\bf P}_n^d\right)^{\rm ss}$ is the set of so-called semi-stable points, and $\left({\bf P}_n^d\right)^{\rm us}$ is the set of so-called unstable points. The function $M$ is the Hilbert-Mumford numerical criterion of geometric invariant theory stability function means that 
\begin{align*}
&\left({\bf P}_n^d\right)^{\rm ss}=\{\, f\in {\bf P}_n^d \, | \, M(f)\leq 0\,\}, \\
&\left({\bf P}_n^d\right)^{\rm us}=\{\, f\in {\bf P}_n^d \, | \, M(f)> 0\,\}.
\end{align*}
\end{remark}

Let $T$ be the maximal subtorus of ${\rm SU}(n)$ defined by the diagonal matrices, and let $\frak t$ be the Lie algebra of $T$. Then the inclusion map of Lie algebras ${\frak t}\subset {\frak s}{\frak u}(n)$ induces the restriction map on the dual spaces ${\rm Res}: {\frak s}{\frak u}(n)^\vee \rightarrow {\frak t}^\vee$. We have shown that ${\frak s}{\frak u}(n)^\vee \simeq \sqrt{-1}\, {\frak s}{\frak u}(n)$, thus ${\frak t}^\vee \simeq \sqrt{-1}\, {\frak t}$, and these identifications give a restriction map ${\rm Res}: \sqrt{-1}\, {\frak s}{\frak u}(n) \rightarrow \sqrt{-1}\, {\frak t}$. Let $${\frak m}_T:= {\rm Res}\circ {\frak m}: {\bf P}_n^d\ \longrightarrow\ \sqrt{-1}\, {\frak t}.$$
Since ${\sqrt{-1}}\, {\frak t}$ consists diagonal hermitian matrices, all the matrices in ${\sqrt{-1}}\, {\frak t}$ will have real entries on their diagonals. If we regard ${\rm diag}(\lambda_1,\ldots ,\lambda_n)\in \sqrt{-1}\, {\frak t}$ as the vector $(\lambda_1,\ldots ,\lambda_n)\in {\mathbb R}^n$, then $\sqrt{-1}\, {\frak t}$ is isomorphic to the hyperplane in ${\mathbb R}^n$ defined by $\lambda_1+\cdots +\lambda_n=0$. The restriction of the metric on ${\frak s}{\frak u}(n)$ induced by the Killing form on ${\sqrt{-1}}\, {\frak t}$ is just the Euclidean norm $\|(\lambda_1,\ldots ,\lambda_n)\|^2=\lambda_1^2+\cdots \lambda_n^2$. We say $\alpha \perp \beta $ in ${\sqrt{-1}}\, {\frak t}$ if $\langle \alpha ,\beta \rangle=0$ under this norm.

\begin{theorem}[See \cite{Atiyah}] Let ${\mathbb A}=\{{\frak m}({\bf x}^\alpha)\, | \, {\bf x}^\alpha \in {\mathscr M}_d\}$ as in Example \ref{Example}. Let ${\cal C}({\mathbb A})$ be the convex polytope generated by ${\mathbb A}$ in $\sqrt{-1}\, {\frak t}$, then the image ${\frak m}_T({\bf P}_n^d)={\cal C}({\mathbb A})$.
\end{theorem}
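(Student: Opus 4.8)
The plan is to recognize this statement as the Atiyah--Guillemin--Sternberg convexity theorem applied to the Hamiltonian action of the torus $T$ on the compact connected symplectic manifold ${\bf P}_n^d$, equipped with the Fubini--Study form coming from the ${\rm SU}(n)$-invariant inner product of Lemma \ref{innerproduct}. That theorem asserts that for a Hamiltonian action of a torus on a compact connected symplectic manifold, the image of the moment map is a convex polytope equal to the convex hull of the images of the torus-fixed points. Accordingly, the whole argument reduces to three checks: that ${\frak m}_T$ is a moment map for the $T$-action, that the $T$-fixed locus in ${\bf P}_n^d$ is exactly the set of monomials ${\mathscr M}_d$, and that ${\frak m}_T$ carries that locus onto ${\mathbb A}$.

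First I would confirm that ${\frak m}_T={\rm Res}\circ {\frak m}$ is a moment map for $T$. Since ${\frak m}$ is a moment map for the ${\rm SU}(n)$-action and $T\subset {\rm SU}(n)$ is a subtorus, composing ${\frak m}$ with the dual restriction ${\rm Res}:{\frak s}{\frak u}(n)^\vee\to {\frak t}^\vee$ induced by the inclusion ${\frak t}\hookrightarrow {\frak s}{\frak u}(n)$ is the standard way to produce a moment map for the restricted action; under the identifications ${\frak s}{\frak u}(n)^\vee\simeq \sqrt{-1}\,{\frak s}{\frak u}(n)$ and ${\frak t}^\vee\simeq \sqrt{-1}\,{\frak t}$ already set up above, this is exactly the map ${\frak m}_T$. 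Together with the compactness and connectedness of ${\bf P}_n^d$, this installs every hypothesis of the convexity theorem.

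Next I would identify the fixed locus. Because $T$ consists of diagonal matrices, a diagonal $t={\rm diag}(t_1,\ldots ,t_n)\in T$ scales each monomial, sending ${\bf x}^\alpha$ to $t_1^{i_1}\cdots t_n^{i_n}\,{\bf x}^\alpha$; hence every monomial represents a $T$-fixed point of ${\bf P}_n^d$. Conversely, a class $[f]$ is $T$-fixed precisely when all monomials appearing in $f$ share a single $T$-character, so I would check that the characters $t\mapsto t_1^{i_1}\cdots t_n^{i_n}$ of distinct monomials in ${\mathscr M}_d$ are pairwise distinct on $T$. Two weights $\alpha,\alpha'\in {\mathscr W}_d$ restrict to the same character of $T$ iff $\alpha-\alpha'$ is an integer multiple of $(1,\ldots ,1)$; but both have coordinate sum $d$, forcing that multiple to be zero and hence $\alpha=\alpha'$. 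Thus the $T$-fixed points are exactly the finitely many, isolated monomials ${\mathscr M}_d$, and by Example \ref{Example} the map ${\frak m}$, and therefore its restriction ${\frak m}_T$, sends ${\bf x}^\alpha$ to ${\rm diag}(i_1-d/n,\ldots ,i_n-d/n)$; consequently ${\frak m}_T({\mathscr M}_d)={\mathbb A}$.

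Applying the convexity theorem then yields ${\frak m}_T({\bf P}_n^d)={\cal C}({\frak m}_T({\mathscr M}_d))={\cal C}({\mathbb A})$, which is the assertion. The only substantive input is the convexity theorem itself, which I would simply cite as \cite{Atiyah}; the remaining steps are bookkeeping. The step most prone to a subtle slip is the distinctness of the monomial characters on $T$: it is precisely the determinant-one constraint $\prod_j t_j=1$, combined with the common degree $d$, that rules out the accidental coincidences one would otherwise see for the full diagonal torus of ${\rm GL}(n,{\mathbb C})$, and that guarantees the fixed points are exactly the monomials.
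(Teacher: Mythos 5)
Your proposal is correct and takes exactly the route the paper intends: the paper gives no proof beyond the citation to Atiyah, and your argument simply verifies the hypotheses of the Atiyah convexity theorem (Hamiltonian $T$-action on the compact connected manifold ${\bf P}_n^d$, with the fixed locus being precisely the monomials ${\mathscr M}_d$, mapped by ${\frak m}_T$ onto ${\mathbb A}$) and then applies it. One quibble with your closing remark, which inverts the logic: for the full diagonal torus of ${\rm GL}(n,{\mathbb C})$ distinct exponents always give distinct characters, and it is the constraint $\prod_j t_j=1$ that \emph{creates} the potential coincidences (characters agree exactly when the exponents differ by a multiple of $(1,\ldots ,1)$), which the common degree $d$ then rules out --- though the check you actually carry out in the body of the proof is the correct one.
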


It is very important of study on the critical points of the function $\|{\frak m}_T\|^2:f \mapsto \|{\frak m}_T(f)\|^2$ (See \cite{Kirwan} and Chapter 8 of \cite{MFK}). Choose a Weyl chamber $$\sqrt{-1}\, {\frak t}^+=\{\, {\rm diag}(\lambda_1,\ldots ,\lambda_n)\in {\sqrt{-1}\, {\frak t}}\, | \, \lambda_1\geq \lambda_2 \geq \ldots \geq \lambda_n\, \}$$ in $\sqrt{-1}\, {\frak t}$. Let ${\mathbb A}^{\cal B}$ be the set of {\bf minimal combinations} of ${\mathbb A}$ as Definition \ref{mc} and let ${\mathbb A}^{\cal B}_+={\mathbb A}^{\cal B}\cap \sqrt{-1}\, {\frak t}^+$. For any $\beta \in {\mathbb A}^{\cal B}_+$, define
\begin{equation}\label{Zbeta}
Z_\beta = \left\{\ \sum_{\alpha\in {\mathscr W}_d}c_\alpha {\bf x}^\alpha \  {\big|} \  \mbox{if}\quad H({\bf x}^\alpha)\perp \beta \quad \mbox{then}\quad c_\alpha=0\ \right\}
\end{equation}
and let \begin{align*}C_\beta = &{\rm SU}(n,{\mathbb C}).(Z_\beta \cap {\frak m}^{-1}(\beta))\\  =
& \{\, A.f\ | \ \mbox{for } A \in {\rm SU}(n,{\mathbb C})\mbox{ and } f \in Z_\beta \cap {\frak m}^{-1}(\beta) \}, \end{align*} summarizing the Chapter 3 in \cite{Kirwan}, we have
\begin{lemma}[See \cite{Kirwan}]\label{diagfbeta} The set of critical points of $\|{\frak m}_T\|^2$ equals to the disjoint union $\bigcup_{\beta \in {\mathbb A}^{\cal B}_+} C_\beta$. For any $\beta \in {\mathbb A}^{\cal B}_+$ and for any $f\in C_\beta$, we have $\|{\frak m}_T(f)\|^2=\|\beta \|^2$ and $M(f)=\|\beta\|$. 
\end{lemma}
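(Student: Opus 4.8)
The plan is to specialize the general critical-point analysis of Chapter~3 of \cite{Kirwan} to the hypersurface situation, where the torus weights are exactly the points of ${\mathbb A}$. First I would fix the identification $\sqrt{-1}\,{\frak t}\simeq\sqrt{-1}\,{\frak t}^\vee$ coming from the Killing form and record the moment map in coordinates. For $f=\sum_{\alpha\in{\mathscr W}_d}c_\alpha{\bf x}^\alpha$, write $\Lambda(f)=\{\alpha\in{\mathscr W}_d\mid c_\alpha\neq 0\}$; then Theorem \ref{moment} and Example \ref{Example}, together with the orthogonal basis of Lemma \ref{innerproduct}, give
\[
{\frak m}_T(f)=\frac{\sum_{\alpha\in\Lambda(f)}|c_\alpha|^2\,\|{\bf x}^\alpha\|^2\,{\frak m}({\bf x}^\alpha)}{\sum_{\alpha\in\Lambda(f)}|c_\alpha|^2\,\|{\bf x}^\alpha\|^2},
\]
so that ${\frak m}_T(f)$ is always a convex combination of the weights ${\frak m}({\bf x}^\alpha)\in{\mathbb A}$ indexed by $\Lambda(f)$.

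Next I would compute the gradient of $\|{\frak m}_T\|^2$ with respect to the Fubini--Study K\"ahler metric. Writing $\beta:={\frak m}_T(f)$ and letting $\beta^{\#}$ be the fundamental vector field generated by $\beta$ (under the identification above), the standard K\"ahler identity used in \cite{Kirwan} gives that $\mathrm{grad}\,\|{\frak m}_T\|^2(f)$ is a nonzero multiple of $J\,\beta^{\#}(f)$, where $J$ is the complex structure. Hence $f$ is a critical point of $\|{\frak m}_T\|^2$ if and only if $\beta^{\#}(f)=0$, i.e. $f$ is fixed by the one-parameter subgroup generated by $\beta$. Reading this off the monomial expansion, a projective point is fixed precisely when all the monomials appearing in it have the same pairing $\langle{\frak m}({\bf x}^\alpha),\beta\rangle$; pairing the displayed convex-combination formula with $\beta$ shows this common value equals $\|\beta\|^2$. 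Thus at a critical point every $\alpha\in\Lambda(f)$ satisfies $\langle{\frak m}({\bf x}^\alpha),\beta\rangle=\|\beta\|^2$ while $\beta\in{\cal C}(\{{\frak m}({\bf x}^\alpha):\alpha\in\Lambda(f)\})$.

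The key step is to recognize this as the defining property of a minimal combination. The equalities $\langle{\frak m}({\bf x}^\alpha),\beta\rangle=\|\beta\|^2$ say that all weights occurring in $f$ lie on the affine hyperplane through $\beta$ orthogonal to $\beta$; combined with $\beta$ lying in their convex hull, this is exactly the statement that $\beta$ is the nearest point from the origin to ${\cal C}(\{{\frak m}({\bf x}^\alpha):\alpha\in\Lambda(f)\})$, i.e. $\beta\in{\mathbb A}^{\cal B}$ in the sense of Definition \ref{mc}. Conversely, given $\beta\in{\mathbb A}^{\cal B}_+$ I would produce critical points by allowing coefficients only on the monomials whose weights sit on that supporting hyperplane and imposing ${\frak m}_T=\beta$; by (\ref{Zbeta}) the resulting locus is precisely $Z_\beta\cap{\frak m}^{-1}(\beta)$. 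Using the Weyl group of ${\rm SU}(n)$ (the coordinate permutations) to move every critical $\beta$ into the chamber $\sqrt{-1}\,{\frak t}^+$, and then saturating under the ${\rm SU}(n)$-action, assembles the whole critical set as $\bigcup_{\beta\in{\mathbb A}^{\cal B}_+}C_\beta$. The union is disjoint because $\|{\frak m}_T\|$ is constant with value $\|\beta\|$ on each piece, and distinct $\beta\in{\mathbb A}^{\cal B}_+$ give distinct norms $\|\beta\|$.

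Finally, for the numerical values: on $Z_\beta\cap{\frak m}^{-1}(\beta)$ one has ${\frak m}_T=\beta$ by construction, whence $\|{\frak m}_T(f)\|^2=\|\beta\|^2$ there. For the identity $M(f)=\|\beta\|$ I would invoke the description of $M$ as the signed distance $\overrightarrow{\sf d}(O,{\frak m}(\overline{{\cal O}(f)}))$ recalled above: on $C_\beta$ the nearest point from the origin to the moment image of the orbit closure is exactly $\beta$ (Lemma~3.1 and Lemma~3.2 of \cite{Ness}), so that $M(f)=\|\beta\|$ for $\beta\neq 0$. I expect the main obstacle to be not the gradient computation but the precise matching of Kirwan's abstract index set with the concrete ${\mathbb A}^{\cal B}_+$, and the identification of the nearest point of the orbit-closure image with $\beta$; this is where the ${\rm SU}(n)$-equivariance of ${\frak m}$ and the convexity input of Atiyah's theorem must be combined to pin down the value $M(f)=\|\beta\|$.
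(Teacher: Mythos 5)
First, a point of comparison: the paper itself gives \emph{no} proof of this lemma --- it is stated as a summary of Chapter 3 of \cite{Kirwan} --- so your proposal has to be measured against Kirwan's own argument. Your main ingredients are indeed hers: the expression of ${\frak m}_T(f)$ as a convex combination of the weights ${\frak m}({\bf x}^\alpha)$, $\alpha\in\Lambda(f)$; the K\"ahler gradient identity ${\rm grad}\,\|{\frak m}_T\|^2(f)\sim J\beta^{\#}(f)$ with $\beta={\frak m}_T(f)$; the resulting criterion that $f$ is critical iff $[f]$ is fixed by the one-parameter subgroup generated by $\beta$, i.e. iff $\langle{\frak m}({\bf x}^\alpha),\beta\rangle=\|\beta\|^2$ for all $\alpha\in\Lambda(f)$; and the observation that this hyperplane condition together with $\beta\in{\cal C}(\{{\frak m}({\bf x}^\alpha)\})$ is exactly the nearest-point property defining a minimal combination. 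Up to that point the argument is sound and is the correct specialization of \cite{Kirwan}.

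Two later steps, however, are genuinely broken. (i) The ${\rm SU}(n)$-saturation step cannot work: $\|{\frak m}_T\|^2$ is only invariant under $T$ and its normalizer, not under ${\rm SU}(n)$, so its critical set cannot equal an ${\rm SU}(n)$-saturated set. What your torus computation actually proves is that the critical set of $\|{\frak m}_T\|^2$ is $\bigcup_{\beta\in{\mathbb A}^{\cal B}}\bigl(Z_\beta\cap{\frak m}_T^{-1}(\beta)\bigr)$, over \emph{all} minimal combinations, with no saturation and with ${\frak m}_T^{-1}(\beta)$ rather than ${\frak m}^{-1}(\beta)$. The discrepancy is not cosmetic: the paper's own example $f=x^2y+xy^2$ of Section 4 has weights $(1,0,-1)$ and $(0,1,-1)$, both pairing to $3/2=\|\beta\|^2$ with $\beta=(\tfrac12,\tfrac12,-1)$, and ${\frak m}_T(f)=\beta$, so $f$ is critical for $\|{\frak m}_T\|^2$ by your (correct) criterion; yet ${\frak m}(f)$ is the non-diagonal matrix the paper records, with eigenvalues $(\tfrac32,-\tfrac12,-1)\notin{\mathbb A}^{\cal B}_+$, so by equivariance of ${\frak m}$ we get $f\notin\bigcup_{\beta'}C_{\beta'}$. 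Thus the identity you set out to prove fails as literally stated; it is Kirwan's theorem for the \emph{full} norm-square $\|{\frak m}\|^2$ (critical iff $f$ is fixed by the one-parameter subgroup of the possibly non-diagonal matrix ${\frak m}(f)$, which one then conjugates into the chamber), and any proof must carry out that non-abelian step, which is absent from your proposal. The lemma as printed conflates ${\frak m}_T$ with ${\frak m}$; a correct write-up should either replace $\|{\frak m}_T\|^2$ by $\|{\frak m}\|^2$, or replace $C_\beta$ by $Z_\beta\cap{\frak m}_T^{-1}(\beta)$ indexed by all of ${\mathbb A}^{\cal B}$. (ii) Your disjointness argument --- that distinct $\beta\in{\mathbb A}^{\cal B}_+$ have distinct norms --- is false in general: for $n=3$, $d=9$ both $(2,1,-3)$ and $(3,1,-2)$ are weights (hence singleton minimal combinations) lying in the chamber, with equal norm $\sqrt{14}$. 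The correct argument is equivariance again: ${\frak m}(C_\beta)$ is the adjoint orbit of $\beta$, and an adjoint orbit of Hermitian matrices meets the positive Weyl chamber in exactly one point, so $\beta$ is determined by any $f\in C_\beta$.
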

We need the following formula.
\begin{lemma}[see \cite{Ness}, Corollary 10.1.1]\label{calpha}Let $$f=\sum_{\alpha \in {\mathscr W}_d}c_\alpha \frac{{\bf x}^\alpha}{\|{\bf x}^\alpha\|}.$$ If ${\frak m}(f)$ is diagonal, then $${\frak m}_T(f)={\frak m}(f)=\sum_{\alpha \in {\mathscr W}_d}\frac{|c_\alpha|^2}{\|f\|^2}\, {\frak m}({\bf x}^\alpha).$$
\end{lemma}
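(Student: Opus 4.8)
The plan is to prove the two asserted equalities in turn, in both cases reducing the matrix identity to a computation of the diagonal entries of the Hessian $H(f)$. First I would dispose of the equality ${\frak m}_T(f)={\frak m}(f)$. Recall that ${\frak m}_T={\rm Res}\circ {\frak m}$, where ${\rm Res}:\sqrt{-1}\,{\frak s}{\frak u}(n)\rightarrow \sqrt{-1}\,{\frak t}$ is, under the Killing-form identification, dual to the inclusion ${\frak t}\subset {\frak s}{\frak u}(n)$. For a hermitian traceless matrix $M$ the associated functional is $X\mapsto {\rm Re}({\rm tr}(MX))$, and when $X$ ranges over the diagonal matrices of $\sqrt{-1}\,{\frak t}$ the quantity ${\rm tr}(MX)=\sum_l M_{ll}X_{ll}$ depends only on the diagonal of $M$; hence ${\rm Res}(M)$ is the diagonal part of $M$. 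Since ${\frak m}(f)$ is assumed diagonal, ${\rm Res}$ fixes it and ${\frak m}_T(f)={\frak m}(f)$.

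For the second equality, since ${\frak m}(f)$ is diagonal by hypothesis and each ${\frak m}({\bf x}^\alpha)$ is diagonal by Example \ref{Example}, the off-diagonal entries vanish on both sides for free, so it suffices to match entries on the diagonal. Writing $\alpha=(i_1,\ldots ,i_n)$, the $(l,l)$ entry is ${\frak m}(f)_{ll}=2\left(H(f)_{ll}-\tfrac{d}{n}\right)$, with $H(f)_{ll}=\frac{1}{d\|f\|^2}\left\|\frac{\partial f}{\partial x_l}\right\|^2$ by (\ref{Hesse}). The heart of the proof is to compute this norm. I would expand $\frac{\partial f}{\partial x_l}=\sum_\alpha \frac{c_\alpha i_l}{\|{\bf x}^\alpha\|}\,x_1^{i_1}\cdots x_l^{i_l-1}\cdots x_n^{i_n}$, a homogeneous polynomial of degree $d-1$. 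The distinct monomials appearing here are pairwise orthogonal in $R_n^{d-1}$ by Lemma \ref{innerproduct} applied in degree $d-1$, because $\frac{\partial {\bf x}^\alpha}{\partial x_l}$ and $\frac{\partial {\bf x}^\beta}{\partial x_l}$ are proportional to the same monomial only when $\alpha=\beta$. Thus all cross terms die, leaving $\left\|\frac{\partial f}{\partial x_l}\right\|^2=\sum_\alpha \frac{|c_\alpha|^2 i_l^2}{\|{\bf x}^\alpha\|^2}\,\|x_1^{i_1}\cdots x_l^{i_l-1}\cdots x_n^{i_n}\|^2$.

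The remaining step is the factorial bookkeeping, exactly as in the proof of Example \ref{Example}: using $\|{\bf x}^\alpha\|^2=\frac{i_1!\cdots i_n!}{d!}$ in degree $d$ and the analogous degree $d-1$ formula for the lowered monomial, the ratio collapses to $\frac{i_l^2\,\|x_1^{i_1}\cdots x_l^{i_l-1}\cdots x_n^{i_n}\|^2}{\|{\bf x}^\alpha\|^2}=d\,i_l$, whence $H(f)_{ll}=\frac{1}{\|f\|^2}\sum_\alpha |c_\alpha|^2 i_l$. Since $\{{\bf x}^\alpha/\|{\bf x}^\alpha\|\}$ is an orthonormal basis, $\|f\|^2=\sum_\alpha |c_\alpha|^2$, so $\sum_\alpha \frac{|c_\alpha|^2}{\|f\|^2}=1$; this lets me absorb the constant shift and conclude ${\frak m}(f)_{ll}=\sum_\alpha \frac{|c_\alpha|^2}{\|f\|^2}\,2\!\left(i_l-\tfrac{d}{n}\right)=\sum_\alpha \frac{|c_\alpha|^2}{\|f\|^2}\,{\frak m}({\bf x}^\alpha)_{ll}$, which is the claim.

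I expect the main obstacle to be organizational rather than conceptual: keeping straight the two different inner products (degree $d$ for $f$ and degree $d-1$ for its partial derivatives) together with the normalizing factors $\|{\bf x}^\alpha\|$, and verifying that the factorial ratio is exactly $d\,i_l$. The one genuinely load-bearing observation is that $\sum_\alpha |c_\alpha|^2/\|f\|^2=1$, which is what converts the affine shift $-d/n$ into the asserted convex combination of the ${\frak m}({\bf x}^\alpha)$.
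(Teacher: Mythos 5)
Your proof is correct. One point of comparison is moot here: the paper never proves this lemma at all---it is quoted verbatim from Ness (Corollary 10.1.1)---so your write-up supplies a verification the paper omits. Your argument is the natural one and stays entirely inside the paper's own toolkit: identifying ${\rm Res}$ with the diagonal-part projection (which makes ${\frak m}_T(f)={\frak m}(f)$ immediate from the diagonality hypothesis), reducing the matrix identity to diagonal entries because the right-hand side is automatically diagonal, killing the cross terms in $\|\partial f/\partial x_l\|^2$ via orthogonality of distinct degree-$(d-1)$ monomials, and then running the same factorial bookkeeping that the paper itself carries out in the proof of Example \ref{Example} (your ratio $d\,i_l$ is exactly the computation that yields $H({\bf x}^\alpha)_{lk}=i_l\delta_{lk}$ there). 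Your closing observation that $\sum_\alpha|c_\alpha|^2=\|f\|^2$, hence $\sum_\alpha|c_\alpha|^2/\|f\|^2=1$, is indeed the step that converts the shift by $-d/n$ into the asserted combination of the ${\frak m}({\bf x}^\alpha)$; the paper records this identity immediately after the lemma, consistent with your use of it. You also deploy the diagonality hypothesis in precisely the two places where it is genuinely needed (triviality of ${\rm Res}$, and agreement of off-diagonal entries), so nothing is missing. The only cosmetic remark: the paper's equation (\ref{Hesse}) has a typo (both partials carry the index $i$) and Example \ref{Example} drops the factor $2$ from ${\frak m}$; your proof silently reads both consistently, which is the right thing to do.
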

Note that $\sum_{\alpha \in {\mathscr W}_d}|c_\alpha|^2=\|f\|^2$, so we have ${\frak m}(f)\in {\cal C}({\mathbb A})$. We prove the following corollary to find $f\in Z_\beta \cap {\frak m}^{-1}(\beta)$ for $\beta \in {\mathbb A}^{\cal B}_+$.
\begin{corollary}\label{Fbeta} Let ${\mathscr U}$ be a non-empty subset of ${\mathscr W}_d$ which satisfies the following conditions:
\begin{enumerate}
\item $S=\{\, {\frak m}({\bf x}^\alpha) \ | \ \alpha \in {\mathscr U}\, \}$ is an affinely independent subset of ${\mathbb A}$,
\item $\beta=\tau(S)\in {\cal C}(S)^\circ$ as the first condition in Theorem \ref{ak}.
\end{enumerate}
Let $\beta=\sum_{\alpha \in {\mathscr U}}q_\alpha \, {\frak m}({\bf x}^\alpha) $, $\sum_{\alpha \in {\mathscr U}}q_\alpha=1$, then
\begin{equation}\label{fbeta}
f_\beta=\sum_{\alpha \in {\mathscr U}}\frac{\sqrt{q_\alpha}}{ \|{\bf x}^\alpha\| }\cdot {\bf x}^\alpha
\end{equation}
is contained in $Z_\beta \cap {\frak m}^{-1}(\beta)$ if ${\frak m}(f_\beta)=\beta$.
\end{corollary}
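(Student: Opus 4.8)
The plan is to verify the two membership conditions separately: that $f_\beta \in {\frak m}^{-1}(\beta)$, which the hypothesis ${\frak m}(f_\beta)=\beta$ supplies directly, and that $f_\beta \in Z_\beta$, where the actual work lies. First I would record two elementary facts about $f_\beta$. Writing (\ref{fbeta}) in the normalized form $f_\beta = \sum_{\alpha \in {\mathscr U}} c_\alpha\, {\bf x}^\alpha/\|{\bf x}^\alpha\|$ with $c_\alpha = \sqrt{q_\alpha}$, the orthogonality of the monomial basis from Lemma \ref{innerproduct} gives $\|f_\beta\|^2 = \sum_{\alpha \in {\mathscr U}} q_\alpha = 1$ and $|c_\alpha|^2 = q_\alpha$. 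This also explains the conditional phrasing: by Lemma \ref{calpha}, whenever ${\frak m}(f_\beta)$ happens to be diagonal its value is forced to be $\sum_{\alpha \in {\mathscr U}} (|c_\alpha|^2/\|f_\beta\|^2)\,{\frak m}({\bf x}^\alpha) = \sum_{\alpha \in {\mathscr U}} q_\alpha\, {\frak m}({\bf x}^\alpha) = \beta$, so the hypothesis ${\frak m}(f_\beta)=\beta$ is exactly the diagonalizability condition that must be tested (and is tested computationally in the algorithm). This step simultaneously yields $f_\beta \in {\frak m}^{-1}(\beta)$.

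For the inclusion $f_\beta \in Z_\beta$, reading off the definition (\ref{Zbeta}) I must check that every monomial ${\bf x}^\alpha$ actually occurring in $f_\beta$ — that is, every $\alpha \in {\mathscr U}$, since all $q_\alpha>0$ because $\beta$ lies in the relative interior ${\cal C}(S)^\circ$ — satisfies $H({\bf x}^\alpha) \not\perp \beta$; for $\alpha \notin {\mathscr U}$ the coefficient already vanishes, so the defining implication of $Z_\beta$ holds vacuously. The first move is to pass from $H$ to ${\frak m}$: since $H({\bf x}^\alpha) = \tfrac{1}{2}{\frak m}({\bf x}^\alpha) + \tfrac{d}{n}I$ by Theorem \ref{moment}, and $\beta \in \sqrt{-1}\,{\frak t}$ is trace-free, one has $\langle H({\bf x}^\alpha), \beta\rangle = \tfrac{1}{2}\langle {\frak m}({\bf x}^\alpha), \beta\rangle + \tfrac{d}{n}\operatorname{tr}(\beta) = \tfrac{1}{2}\langle {\frak m}({\bf x}^\alpha), \beta\rangle$, so $H({\bf x}^\alpha)\perp\beta$ is equivalent to ${\frak m}({\bf x}^\alpha)\perp\beta$.

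The core of the argument is then the observation that condition (2) of the hypothesis, invoking part (1) of Theorem \ref{ak}, forces $\beta = \tau(S) = \sigma(S)$, and $\sigma(S)$ is by construction the foot of the perpendicular from $O$ to ${\rm Aff}(S)$. Consequently $\langle {\frak m}({\bf x}^\alpha), \beta\rangle = \langle {\frak m}({\bf x}^\alpha), \sigma(S)\rangle = \|\sigma(S)\|^2 = \|\beta\|^2$ for every $\alpha \in {\mathscr U}$, because ${\frak m}({\bf x}^\alpha) \in S \subset {\rm Aff}(S)$ and $\sigma(S)$ is orthogonal to the direction space of ${\rm Aff}(S)$. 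Since $\beta \neq 0$ (the relevant case, $\beta$ being an unstable weight), this inner product is nonzero, so ${\frak m}({\bf x}^\alpha) \not\perp \beta$ and hence $H({\bf x}^\alpha) \not\perp \beta$ for all $\alpha \in {\mathscr U}$. This is precisely what the $Z_\beta$ condition demands, giving $f_\beta \in Z_\beta$ and therefore $f_\beta \in Z_\beta \cap {\frak m}^{-1}(\beta)$.

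I expect the main obstacle to be handling the precise orthogonality property of $\sigma(S)$ correctly: one must use that $\sigma(S)$ is perpendicular to the difference vectors spanning ${\rm Aff}(S)$, which yields $\langle {\frak m}({\bf x}^\alpha), \sigma(S)\rangle = \|\sigma(S)\|^2$ — a nonzero constant over the whole support of $f_\beta$ — rather than $0$, and to isolate the harmless degenerate case $\beta = 0$, where $Z_\beta$ collapses and the statement is not applied.
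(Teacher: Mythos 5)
Your argument is correct in its core, and that core coincides with the paper's own proof: both rest on the fact that condition (2), via Theorem \ref{ak}(1), forces $\beta=\tau(S)=\sigma(S)$ to be the foot of the perpendicular from the origin to ${\rm Aff}(S)$, whence $\langle {\frak m}({\bf x}^\alpha),\beta\rangle=\|\beta\|^2$ for every $\alpha\in{\mathscr U}$; and both obtain $f_\beta\in{\frak m}^{-1}(\beta)$ from Lemma \ref{calpha} together with the hypothesis ${\frak m}(f_\beta)=\beta$, which, as you observe, is exactly the diagonality check (your normalization $\|f_\beta\|^2=\sum_\alpha q_\alpha=1$ is also precisely how the coefficients $\sqrt{q_\alpha}$ in (\ref{fbeta}) arise).

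The one divergence is interpretive, and it matters at $\beta=0$. You read ``$H({\bf x}^\alpha)\perp\beta$'' in (\ref{Zbeta}) literally as a vanishing inner product, so your membership argument needs $\|\beta\|^2\neq 0$, and you set aside $\beta=0$ as a case ``where the statement is not applied.'' But the paper does apply Corollary \ref{Fbeta} at $\beta=(0,0,0)$: in Section \ref{SOCC} it produces $x^2z+y^2z$, $x^2y+z^2y$, $xz^2+xy^2$ and $x^3+y^3+z^3$ exactly this way, and it reuses those solutions for cubic surfaces. The reading the paper actually works with --- consistent with its introduction, with its figures (``the hyperplane which contains $\beta$ and perpendicular to $\beta$''), and with Kirwan and Ness --- is the affine one: $Z_\beta$ is spanned by the monomials whose weight satisfies ${\frak m}({\bf x}^\alpha)-\beta\perp\beta$, i.e. $\langle{\frak m}({\bf x}^\alpha),\beta\rangle=\|\beta\|^2$. (The implication as printed in (\ref{Zbeta}) is garbled; taken literally it would give $Z_0=\{0\}$, contradicting the paper's own statement that $Z_0={\bf P}_3^3$.) Under that reading, the identity you prove is verbatim the membership criterion for $Z_\beta$, no non-vanishing is needed, and $\beta=0$ becomes a trivial case rather than an excluded one. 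Your trace argument translating $H$-perpendicularity into ${\frak m}$-perpendicularity is a careful touch the paper skips, though it too becomes unnecessary once the affine reading is adopted; so to cover the full statement as the paper uses it, keep your computation and simply replace the last interpretive step.
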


\begin{proof} Since $\beta =\tau(S)$, we have $\beta \in {\mathbb A}^{\cal B}$, thus $\beta \in {\cal C}({\mathbb A})$, so $\beta$ is diagonal. Also since $\beta = \tau(S)$, we have $\beta \perp {\rm Aff}(S)$, so $\forall \alpha \in {\mathscr U}$, we have $\beta \perp {\frak m}({\bf x}^\alpha)$. Thus 

$$\left\{\, \sum_{\alpha \in {\mathscr U}}c_\alpha {\bf x}^\alpha \ | \ c_\alpha \in {\mathbb C} \right\}\subset Z_\beta.$$
Let $f=\sum_{\alpha \in {\mathscr U}}c'_\alpha {\bf x}^\alpha$, according to Lemma \ref{calpha}, we solve the equation ${\frak m}(f)=\beta$, that is,
$$\sum_{\alpha \in {\mathscr U}}|c'_\alpha|^2\|{\bf x}^\alpha\|^2\, {\bf x}^\alpha = \sum_{\alpha \in {\mathscr U}}q_\alpha \, {\frak m}({\bf x}^\alpha) $$ $\blacksquare$
\end{proof}

The ``$\beta$" in Corollary \ref{Fbeta} satisfies that $\beta \in {\mathbb A}^{\cal B}_k$, where ${\mathbb A}^{\cal B}_k$ is defined as (\ref{akb}) and $k=\sharp(S)$. Let ${\mathbb A}^{\cal B}_{+,k}={\mathbb A}^{\cal B}_+\cap {\mathbb A}^{\cal B}_k$, our goal for studying on ${\bf P}_n^d$ is the following:
\begin{enumerate}
\item For ${\bf P}_n^d$, we compute ${\mathbb A}^{\cal B}_+$ and all positive values of the function $M$;
\item For all $\beta \in {\mathbb A}^{\cal B}_{+,k}$, $k=1,2,\ldots \sharp({\mathbb A})$, we find $f_\beta \in \left({\bf P}_n^d\right)^{\rm us}$ such that ${\frak m}(f_\beta)\in Z_\beta \cap {\frak m}^{-1}(\beta)$ by (\ref{fbeta}).
\end{enumerate}

\section{Stability of Cubic Curves}\label{SOCC}

The situation $n=d=3$ has been studied by \cite{Ness}, in this section we repeat this example by our algorithm.  The computation is complicated, but still could be finished by hand.

In this case, let $R_3={\mathbb C}[x,y,z]$ be the polynomial ring of three variables $x,y,z$, let ${\bf x}=(x,y,z)$.
Let $R_3^3$ be the homogeneous degree 3 part of $R_3$. It is a 10 dimensional vector space spanned by the set of monomials
$${\mathscr M}_3=\{\, x^3,y^3,z^3,x^2y,xy^2,y^2z,yz^2,x^2z,xz^2,xyz\,\}.$$
A general element in $R_3^3$ or ${\bf P}^3_3$ is of the form
\begin{align*}
\sum_{\alpha \in {\mathscr W}_3}c_\alpha {\bf x}^\alpha = & {{c}_{0,0,3}} {{z}^{3}}+{{c}_{0,1,2}} y {{z}^{2}}+{{c}_{1,0,2}} x {{z}^{2}}+{{c}_{0,2,1}} {{y}^{2}} z+{{c}_{1,1,1}} x y z+{{c}_{2,0,1}} {{x}^{2}} z+ \\ & {{c}_{0,3,0}} {{y}^{3}}+{{c}_{1,2,0}} x {{y}^{2}}+{{c}_{2,1,0}} {{x}^{2}} y+{{c}_{3,0,0}} {{x}^{3}}\end{align*}
where ${\mathscr W}_3$ is the set $$\{ \, (3,0,0),(0,3,0),(0,0,3),(2,1,0),(1,2,0),(0,2,1),(0,1,2),(2,0,1),(1,0,2),(1,1,1)\, \}.$$

The Lie algebra $\sqrt{-1}\, {\frak s}{\frak u}(3)$ is visualizable. The set ${\mathscr W}_3$ is regarded as the set of Hessians of monomials $H({\mathscr M}_3)$ in $\sqrt{-1}\, {\frak t}$ that lies in the hyperplane of $${\mathbb R}^3\simeq \{{\rm diag}(\lambda_1,\lambda_2,\lambda_3)|\lambda_i\in {\mathbb R} \}$$ defined by $\lambda_1+\lambda_2+\lambda_3=3$ (see {\bf Fig.} \ref{M33}).
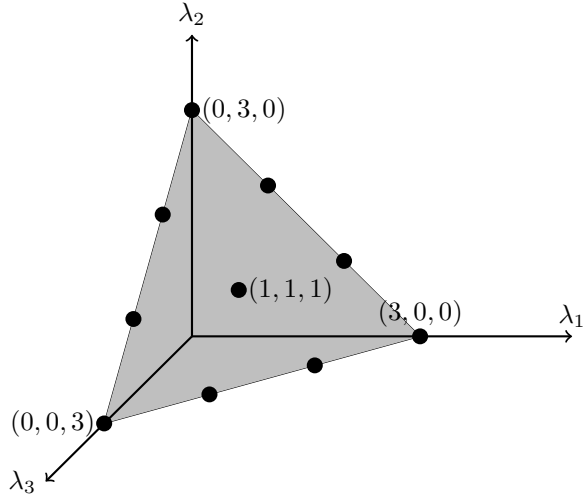
\begin{figure}[h]
\centering
\begin{tikzpicture}

\coordinate (A) at (0,0,0);
\coordinate (B) at (3,0,0);
\coordinate (C) at (0,3,0);
\coordinate (D) at (0,0,3);
\coordinate (E) at (2,1,0);
\coordinate (F) at (1,2,0);
\coordinate (G) at (0,2,1);
\coordinate (H) at (0,1,2);
\coordinate (I) at (2,0,1);
\coordinate (J) at (1,0,2);
\coordinate [label=right:${(1,1,1)}$](K) at (1,1,1);

\draw (B) -- (C) -- (D) -- cycle;
\fill[gray!50](B) -- (C) -- (D) -- cycle;

\draw [thick,->](0,0) -- (xyz cs:x=5);
\draw [thick,->](0,0) -- (xyz cs:y=4);
\draw [thick,->](0,0) -- (xyz cs:z=5);

\coordinate[label=above:$\lambda_1$](L) at (5,0,0);
\coordinate[label=above:$\lambda_2$](L) at (0,4,0);
\coordinate[label=left:$\lambda_3$](L) at (0,0,5);

\foreach \point in {B,C,D,E,F,G,H,I,J,K}
\fill [black] (\point) circle (3pt);

\coordinate [label=right:${(1,1,1)}$](K) at (1,1,1);
\coordinate [label=above:${(3,0,0)}$](B) at (3,0,0);
\coordinate [label=right:${(0,3,0)}$](C) at (0,3,0);
\coordinate [label=left:${(0,0,3)}$](D) at (0,0,3);

\end{tikzpicture}

\caption{The bullets are points in the set $H({\mathscr M}_3)$, and the shaded hyperplane is defined by $\lambda_1+\lambda_2+\lambda_3=3$ \label{M33} }
\end{figure}

The set ${\mathbb A}={\frak m}({\mathscr M}_d)$ of weights is on the hyperplane $\lambda_1+\lambda_2+\lambda_3=0$ (see {\bf Fig.} ). This set also equals to ${\mathbb A}^{\cal B}_1$.
Project the picture of ${\mathbb A}$ to the plane $\lambda_1+\lambda_2+\lambda_3=0$, we have {\bf Fig.} \ref{FIG4}.
\begin{figure}[H]

\centering
\begin{tikzpicture}
\coordinate [label=left:${}$] (A) at (-1,0);
\coordinate [label=right:${}$] (B) at (5,0);
\draw (A) -- (B);

\node [label=above:${}$] (X) at
($ (A) !  .5!(B) $) {};
\node [fill=red,inner sep=1pt,label=left:${(-1,-1,2)}$] (H) at ($(A)$){};
\node [fill=red,inner sep=1pt,label=right:${(-1,2,-1)}$] (H) at ($(B)$){};

\coordinate [label=above:${}$] (D) at
($ (A) ! .5 ! (B) ! {sin(60)*2} ! 90:(B) $) {};

\node [fill=red,inner sep=1pt,label=above:${(2,-1,-1)}$] (H) at ($(D)$){};

\draw (A) -- (D) -- (B);

\coordinate (F) at
($ (D) ! 2/3!(X)   $) {};

\coordinate [label=below:${(-1,0,1)}$] (E) at ($ (A)! 1/3!(B) $) {};
\coordinate [label=below:${(-1,1,0)}$] (C) at ($ (A)! 2/3!(B) $) {};
\coordinate [label=left:${(0,-1,1)}$] (G) at ($ (A)! 1/3!(D) $) {};
\coordinate [label=left:${(1,-1,0)}$] (H) at ($ (A)! 2/3!(D) $) {};
\coordinate  [label=right:${(1,0,-1)}$] (I) at ($ (D)! 1/3!(B) $) {};
\coordinate  [label=right:${(0,1,-1)}$] (J) at ($ (D)! 2/3!(B) $) {};
\coordinate   (M) at ($ (D)! 1/2!(B) $) {};

\draw[line width=2pt](M) -- (F)  -- (D) -- cycle;

\fill[gray!50](M) -- (F) -- (D) -- cycle;

\coordinate [label=below:${(0,0,0)}$] (F) at (F);
\foreach \point in {A,B,C,D,E,F,G,H,I,J}
\fill [white] (\point) circle (3pt);
\foreach \point in {A,B,C,D,E,F,G,H,I,J}
\draw  (\point) circle (3pt);

\fill [black] (M) circle (2pt);
\end{tikzpicture}

\caption{The set ${\mathbb A}$ and ${\mathbb A}^{\cal B}_{+,1}$\label{FIG4}}
\end{figure}
The shaded part (including the boundary) of {\bf Fig.} \ref{FIG4} is the Weyl chamber defined by $\lambda_1\geq \lambda_2 \geq \lambda_3$. So ${\mathbb A}^{\cal B}_{+,1}$ contains three points $(2,-1,-1)$,$(1,0,-1)$ and $(0,0,0)$. For $d/n=1$, we add the vector $(1,1,1)$ to these three points and get three points $(3,0,0)$, $(2,1,0)$ and $(1,1,1)$ in ${\mathscr W}_d$. They correspond to the monomials $x^3$, $x^2y$ and $xyz$ if we define the lexicographic order $x\succ y \succ z$.

The result for ${\mathbb A}^{\cal B}_{+,1}$ is summarized as Table \ref{k=1}.
The first column lists the elements $\beta\in {\mathbb A}^{\cal B}_{+,1}$. For each $\beta$, the second column lists the affinely independent sets $S$ such that $\tau(S)=\beta$ and $\beta\in {\cal C}(S)^\circ$. It is possible that there exist more than one such $S$ corresponding to the same $\beta$. The polynomial $f\in {\bf P}_3^3$ in the third column are computed according to Corollary \ref{Fbeta}, and $M(f)$ is the length of $\beta$ which also equals to the value of the numerical criterion function $M$.

\begin{table}[h]

\label{tab:1}       
\begin{tabular}{llll}
\hline\noalign{\smallskip}
$\beta$ & $S$ & $f$ & $M(f)$ \\
\noalign{\smallskip}\hline\noalign{\smallskip}
$(2,-1,-1)$ & $(2,-1,-1)$ & $x^3$ & $\sqrt{6}$ \\
$(1,0,-1)$ & $(1,0,-1)$ & $x^2y$ & $\sqrt{2}$ \\
$(0,0,0)$ & $(0,0,0)$ & $xyz$ & 0\\
\noalign{\smallskip}\hline
\end{tabular}

\caption{Result for ${\mathbb A}^{\cal B}_{+,1}$\label{k=1}}
\end{table}

For $k=2$, there are five elements ${\displaystyle{\left(\frac{1}{2},\frac{1}{2},-1\right)}}$, ${\displaystyle{\left(\frac{1}{2},\frac{1}{2},-1\right)}}$, $ {\displaystyle{\left(\frac{2}{7},\frac{1}{14},-\frac{5}{14}\right)}}$, ${\displaystyle{\left(\frac{1}{2},0,-\frac{1}{2}\right)}}$ and $(0,0,0)$ in ${\mathbb A}^{\cal B}_{+,2}$  as the white circles in Figure \ref{akb+2}. 

\begin{figure}[H]\label{akb+2}

\centering
\begin{tikzpicture}

\coordinate [label=left:${(-1,-1,2)}$] (A) at (-5,0);
\coordinate [label=right:${(-1,2,-1)}$] (B) at (7,0);
\draw (A) -- (B);

\node [label=above:${}$] (X) at
($ (A) !  .5!(B) $) {};

\coordinate [label=above:${(2,-1,-1)}$] (D) at
($ (A) ! .5 ! (B) ! {sin(60)*2} ! 90:(B) $) {};

\coordinate [label=below:${(0,0,0)}$] (F) at
($ (D) ! 2/3!(X)   $) {};

\coordinate [label=right:${\displaystyle{\left(\frac{1}{2},\frac{1}{2},-1\right)}}$] (E) at ($ (B)! 1/2!(D) $) {};

\coordinate   (M) at ($ (D)! 1/2!(B) $) {};
\fill[gray!50](M) -- (F) -- (D) -- cycle;
\draw (A) -- (D) -- (B);
\coordinate [label=below:${}$] (M) at ($ (A)! 1/3!(B) $) {};
\coordinate [label=below:${}$] (N) at ($ (A)! 2/3!(B) $) {};
\coordinate [label=left:${}$] (P) at ($ (A)! 1/3!(D) $) {};
\coordinate [label=left:${}$] (H) at ($ (A)! 2/3!(D) $) {};
\coordinate [label=right:${}$] (X) at ($ (D)! 1/3!(B) $) {};
\coordinate [label=right:${}$] (Y) at ($ (D)! 2/3!(B) $) {};

\draw [dashed] (H)--(B);
\coordinate [label=left:$ {\displaystyle{\left(\frac{2}{7},\frac{1}{14},-\frac{5}{14}\right)}}\quad $] (Z) at ($ (B)! 9/14!(H) $) {};

\draw [dashed](H)--(X);
\coordinate [label=above:${\displaystyle{\left(1,-\frac{1}{2},-\frac{1}{2}\right)}}$] (Q) at ($ (X)! 1/2!(H) $) {};

\draw [dashed](H)--(Y);
\coordinate [label=above:${\displaystyle{\left(\frac{1}{2},0,-\frac{1}{2}\right)}}$] (R) at ($ (Y)! 1/2!(H) $) {};

\draw [dashed](P)--(Y);

\foreach \point in {A,B,D,M,N,P,H,X,Y}
\fill [black] (\point) circle (2pt);

\foreach \point in {E,Z,Q,F,R}
\fill [white] (\point) circle (4pt);
\foreach \point in {E,Z,Q,F,R}
\draw  (\point) circle (4pt);

\end{tikzpicture}

\caption{Elements in ${\mathbb A}^{\cal B}_{+,2}$}
\end{figure}

For each $\beta \in {\mathbb A}^{\cal B}_{+,2}$ as circles in the picture, the dashed lines is the hyperplane which contains $\beta$ and perpendicular to $\beta$.

For $\beta= {\displaystyle{\left(\frac{2}{7},\frac{1}{14},-\frac{5}{14}\right)}}$, we have the picture

\begin{figure}[H]
\caption{$\beta= {\displaystyle{\left(\frac{2}{7},\frac{1}{14},-\frac{5}{14}\right)}}$}
\centering
\begin{tikzpicture}

\coordinate [label=left:${(-1,-1,2)}$] (A) at (0,0);
\coordinate [label=right:${(-1,2,-1)}$] (B) at (5,0);
\draw (A) -- (B);

\node [label=above:${}$] (X) at
($ (A) !  .5!(B) $) {};

\coordinate [label=above:${(2,-1,-1)}$] (D) at
($ (A) ! .5 ! (B) ! {sin(60)*2} ! 90:(B) $) {};

\draw (A) -- (D) -- (B);

\coordinate [label=below:${(0,0,0)}$] (F) at
($ (D) ! 2/3!(X)   $) {};

\coordinate  (E) at ($ (B)! 1/2!(D) $) {};

\coordinate [label=below:${}$] (M) at ($ (A)! 1/3!(B) $) {};
\coordinate [label=below:${}$] (N) at ($ (A)! 2/3!(B) $) {};
\coordinate [label=left:${}$] (P) at ($ (A)! 1/3!(D) $) {};
\coordinate [label=left:${(1,-1,0)}$] (H) at ($ (A)! 2/3!(D) $) {};
\coordinate [label=right:${}$] (X) at ($ (D)! 1/3!(B) $) {};
\coordinate [label=right:${}$] (Y) at ($ (D)! 2/3!(B) $) {};

\draw [line width=3pt,gray] (H)--(B);
\coordinate [label=above:$\beta$] (Z) at ($ (B)! 9/14!(H) $) {};

\draw (Z)--(F);

\foreach \point in {B,H,F}
\fill [black] (\point) circle (3pt);

\foreach \point in {Z}
\fill [white] (\point) circle (4pt);
\foreach \point in {Z}
\draw  (\point) circle (4pt);

\end{tikzpicture}
\end{figure}

The grey line is the hyperplane which contains $\beta$ and perpendicular to the vector $\beta$ (the direction from $(0,0,0)$ to $\beta$). The line contains 2 points in $(1,-1,0)$ and $(-1,2,-1)$ in ${\mathbb A}$.
They are affinely independent, so $S=\{(1,-1,0),(-1,2,-1)\}$ is the set of cardinality 2 such that $\beta \in {\cal C}(S)^\circ$, the corresponding elements in ${\mathscr W}_d$  are $(2,1,0)=(1,-1,0)+(1,1,1)$ and $(0,3,0)=(-1,2,-1)+(1,1,1)$. Write $\beta$ as a convex combination of these two points as
$${\displaystyle{\beta=\left(\frac{2}{7},\frac{1}{14},-\frac{5}{14}\right)=\frac{5}{14}}\cdot (-1,2,-1) +\frac{9}{14}\cdot (1,-1,0).}$$
The elements of ${\mathscr M}_d$ corresponding to the set $S$ are $x^2z$ and $y^3$. Thus
$$Z_\beta=\{ax^2z+by^3\in {\bf P}_3^3 \ | \ a,b\in {\mathbb C}\}$$ By (\ref{fbeta}), we get that up to $\pm 1$ and $\pm\sqrt{-1}$ multiplication on the coefficients and multiply the maximal common demonstrators,
$$f_\beta= 3\sqrt{3} x^2z + \sqrt{5}y^3.$$
{\bf Warning!} This formula only works under the assumption that ${\frak m}(f_\beta)$ is diagonal, so we have to double check that ${\frak m}(f_\beta)=\beta$. If not, then there is no solution in this convex polytope. But we can check that this is true for $f_\beta=3\sqrt{3} x^2z + \sqrt{5}y^3$

After running the same process for $\beta= {\displaystyle{\left(1,-\frac{1}{2},-\frac{1}{2}\right)}}$, we have
$$f_\beta = x^2y+x^2z $$ in the polytope generated by $(1,0,-1)$ and $(1,-1,0)$. This time, the moment matrix of $f_\beta$ is 
\[\begin{pmatrix}1 & 0 & 0\cr 0 & -\frac{1}{2} & \frac{1}{2}\cr 0 & \frac{1}{2} & -\frac{1}{2}\end{pmatrix}\]
which is NOT diagonal, so we also skip this answer.

\begin{figure}[H]

\centering
\begin{tikzpicture}
{\coordinate [label=left:${}$] (A) at (0,0);
\coordinate [label=right:${(-1,2,-1)}$] (B) at (5,0);
\draw (A) -- (B);

\node [label=above:${}$] (X) at
($ (A) !  .5!(B) $) {};

\coordinate [label=above:${(2,-1,-1)}$] (D) at
($ (A) ! .5 ! (B) ! {sin(60)*2} ! 90:(B) $) {};

\draw (A) -- (D) -- (B);

\coordinate [label=below:${(0,0,0)}$] (F) at
($ (D) ! 2/3!(X)   $) {};

\coordinate  (E) at ($ (B)! 1/2!(D) $) {};

\coordinate [label=below:${}$] (M) at ($ (A)! 1/3!(B) $) {};
\coordinate [label=below:${}$] (N) at ($ (A)! 2/3!(B) $) {};
\coordinate [label=left:${}$] (P) at ($ (A)! 1/3!(D) $) {};
\coordinate [label=left:${}$] (H) at ($ (A)! 2/3!(D) $) {};
\coordinate [label=right:${(1,0,-1)}$] (X) at ($ (D)! 1/3!(B) $) {};
\coordinate [label=right:${(0,1,-1)}$] (Y) at ($ (D)! 2/3!(B) $) {};

\draw [line width=3pt,gray] (D)--(B);
\coordinate [label=left:$\beta\ $] (Z) at ($ (B)! 1/2!(D) $) {};

\draw (Z)--(F);

\foreach \point in {F,B,D,X,Y}
\fill [black] (\point) circle (3pt);

\foreach \point in {Z}
\fill [white] (\point) circle (4pt);
\foreach \point in {Z}
\draw  (\point) circle (4pt);}
\end{tikzpicture}

\caption{$\beta= {\displaystyle{\left(\frac{1}{2},\frac{1}{2},-1\right)}}$}\label{1/2,1/2,-1-1}
\end{figure}
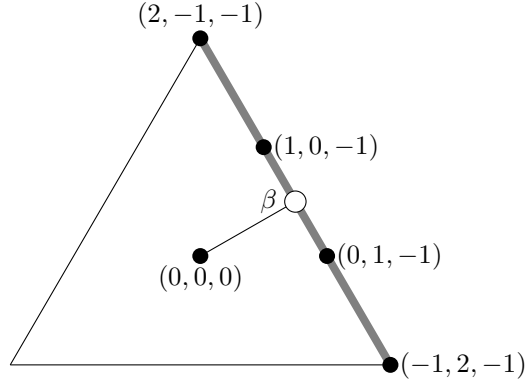
Coming to $\beta= {\displaystyle{\left(\frac{1}{2},\frac{1}{2},-1\right)}}$. This situation is different from the previous two. First, the corresponding elements are $x^3,x^2y,xy^2,y^3$. Thus linear combinations of them do not contain the variable $y$. According to \cite{Ness}, these points come from the moment map of lower dimension. Here they come from ${\rm SL}(2,{\mathbb C})$-representations. They are binary forms of degree 3. Second, there are four points on the hyperplane which is perpendicular to $\beta$ (see Figure\ref{1/2,1/2,-1-1}), so 
$$Z_\beta = \{ax^3+bx^2y+cxy^2+dy^3\in \subset {\bf P}_3^3 \ | \ a,b,c,d\in {\mathbb C}\}$$
is isomorphic to ${\bf P}_2^2\simeq {\mathbb P}^3$ lies in ${\bf P}_3^3$ defined by the equation system $c_{\alpha}=0$ for $\alpha \perp \beta$. 

But consider $\beta$ as an element in ${\mathbb A}^{\cal B}_{+,2}$, so we only consider polytopes which are generated by 2 elements in ${\mathbb A}$ containing $\beta$. There are four such sets $S\subset {\mathbb A} $ of cardinality 2 satisfies this condition, the sets $\{(2,-1,-1),(0,1,-1)\}$, $\{(2,-1,-1),(-1,2,-1)\}$, $\{(1,0,1),(0,1,-1)\}$ and $\{(2,-1,-1),(-1,2,-1)\}$ as Figure \ref{1/2,1/2.-1-2}.

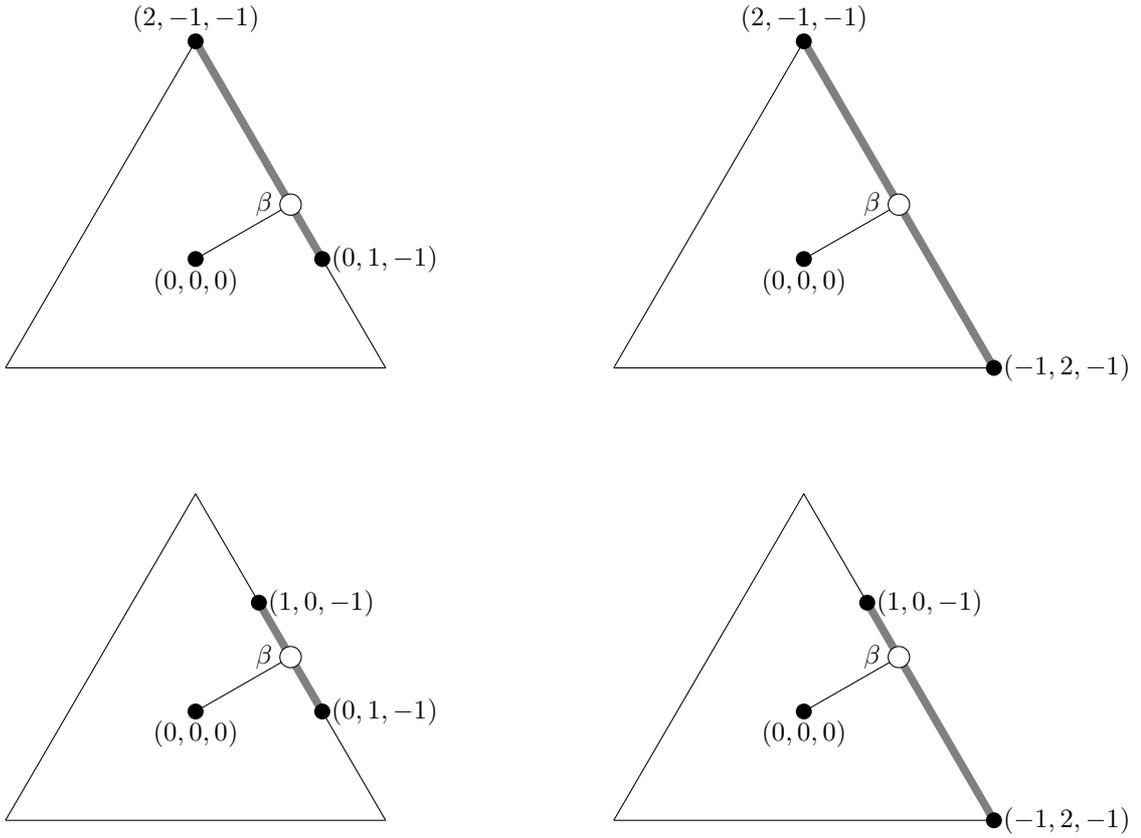
\begin{figure}[h]

\begin{tikzpicture}
{\coordinate [label=left:${}$] (A) at (-10,0);
\coordinate [label=right:${}$] (B) at (-5,0);
\draw (A) -- (B);

\node [label=above:${}$] (X) at
($ (A) !  .5!(B) $) {};

\coordinate [label=above:${(2,-1,-1)}$] (D) at
($ (A) ! .5 ! (B) ! {sin(60)*2} ! 90:(B) $) {};

\draw (A) -- (D) -- (B);

\coordinate [label=below:${(0,0,0)}$] (F) at
($ (D) ! 2/3!(X)   $) {};

\coordinate  (E) at ($ (B)! 1/2!(D) $) {};

\coordinate [label=below:${}$] (M) at ($ (A)! 1/3!(B) $) {};
\coordinate [label=below:${}$] (N) at ($ (A)! 2/3!(B) $) {};
\coordinate [label=left:${}$] (P) at ($ (A)! 1/3!(D) $) {};
\coordinate [label=left:${}$] (H) at ($ (A)! 2/3!(D) $) {};
\coordinate [label=right:${}$] (X) at ($ (D)! 1/3!(B) $) {};
\coordinate [label=right:${(0,1,-1)}$] (Y) at ($ (D)! 2/3!(B) $) {};

\draw [line width=3pt,gray] (D)--(Y);
\coordinate [label=left:$\beta\ $] (Z) at ($ (B)! 1/2!(D) $) {};

\draw (Z)--(F);

\foreach \point in {F,D,Y}
\fill [black] (\point) circle (3pt);

\foreach \point in {Z}
\fill [white] (\point) circle (4pt);
\foreach \point in {Z}
\draw  (\point) circle (4pt);}

{\coordinate [label=left:${}$] (A) at (-2,0);
\coordinate [label=right:${(-1,2,-1)}$] (B) at (3,0);
\draw (A) -- (B);

\node [label=above:${}$] (X) at
($ (A) !  .5!(B) $) {};

\coordinate [label=above:${(2,-1,-1)}$] (D) at
($ (A) ! .5 ! (B) ! {sin(60)*2} ! 90:(B) $) {};

\draw (A) -- (D) -- (B);

\coordinate [label=below:${(0,0,0)}$] (F) at
($ (D) ! 2/3!(X)   $) {};

\coordinate  (E) at ($ (B)! 1/2!(D) $) {};

\coordinate [label=below:${}$] (M) at ($ (A)! 1/3!(B) $) {};
\coordinate [label=below:${}$] (N) at ($ (A)! 2/3!(B) $) {};
\coordinate [label=left:${}$] (P) at ($ (A)! 1/3!(D) $) {};
\coordinate [label=left:${}$] (H) at ($ (A)! 2/3!(D) $) {};
\coordinate [label=right:${}$] (X) at ($ (D)! 1/3!(B) $) {};
\coordinate [label=right:${}$] (Y) at ($ (D)! 2/3!(B) $) {};

\draw [line width=3pt,gray] (D)--(B);
\coordinate [label=left:$\beta\ $] (Z) at ($ (B)! 1/2!(D) $) {};

\draw (Z)--(F);

\foreach \point in {F,B,D}
\fill [black] (\point) circle (3pt);

\foreach \point in {Z}
\fill [white] (\point) circle (4pt);
\foreach \point in {Z}
\draw  (\point) circle (4pt);}

{\coordinate [label=left:${}$] (A) at (-10,-6);
\coordinate [label=right:${}$] (B) at (-5,-6);
\draw (A) -- (B);

\node [label=above:${}$] (X) at
($ (A) !  .5!(B) $) {};

\coordinate [label=above:${}$] (D) at
($ (A) ! .5 ! (B) ! {sin(60)*2} ! 90:(B) $) {};

\draw (A) -- (D) -- (B);

\coordinate [label=below:${(0,0,0)}$] (F) at
($ (D) ! 2/3!(X)   $) {};

\coordinate  (E) at ($ (B)! 1/2!(D) $) {};

\coordinate [label=below:${}$] (M) at ($ (A)! 1/3!(B) $) {};
\coordinate [label=below:${}$] (N) at ($ (A)! 2/3!(B) $) {};
\coordinate [label=left:${}$] (P) at ($ (A)! 1/3!(D) $) {};
\coordinate [label=left:${}$] (H) at ($ (A)! 2/3!(D) $) {};
\coordinate [label=right:${(1,0,-1)}$] (X) at ($ (D)! 1/3!(B) $) {};
\coordinate [label=right:${(0,1,-1)}$] (Y) at ($ (D)! 2/3!(B) $) {};

\draw [line width=3pt,gray] (X)--(Y);
\coordinate [label=left:$\beta\ $] (Z) at ($ (B)! 1/2!(D) $) {};

\draw (Z)--(F);

\foreach \point in {F,X,Y}
\fill [black] (\point) circle (3pt);

\foreach \point in {Z}
\fill [white] (\point) circle (4pt);
\foreach \point in {Z}
\draw  (\point) circle (4pt);}

{\coordinate [label=left:${}$] (A) at (-2,-6);
\coordinate [label=right:${(-1,2,-1)}$] (B) at (3,-6);
\draw (A) -- (B);

\node [label=above:${}$] (X) at
($ (A) !  .5!(B) $) {};

\coordinate [label=above:${}$] (D) at
($ (A) ! .5 ! (B) ! {sin(60)*2} ! 90:(B) $) {};

\draw (A) -- (D) -- (B);

\coordinate [label=below:${(0,0,0)}$] (F) at
($ (D) ! 2/3!(X)   $) {};

\coordinate  (E) at ($ (B)! 1/2!(D) $) {};

\coordinate [label=below:${}$] (M) at ($ (A)! 1/3!(B) $) {};
\coordinate [label=below:${}$] (N) at ($ (A)! 2/3!(B) $) {};
\coordinate [label=left:${}$] (P) at ($ (A)! 1/3!(D) $) {};
\coordinate [label=left:${}$] (H) at ($ (A)! 2/3!(D) $) {};
\coordinate [label=right:${(1,0,-1)}$] (X) at ($ (D)! 1/3!(B) $) {};
\coordinate [label=right:${}$] (Y) at ($ (D)! 2/3!(B) $) {};

\draw [line width=3pt,gray] (X)--(B);
\coordinate [label=left:$\beta\ $] (Z) at ($ (B)! 1/2!(D) $) {};

\draw (Z)--(F);

\foreach \point in {F,B,X}
\fill [black] (\point) circle (3pt);

\foreach \point in {Z}
\fill [white] (\point) circle (4pt);
\foreach \point in {Z}
\draw  (\point) circle (4pt);}
\end{tikzpicture}

\caption{Polytopes containing $\beta= {\displaystyle{\left(\frac{1}{2},\frac{1}{2},-1\right)}}$ in the relative interior\label{1/2,1/2.-1-2}}
\end{figure}

For each of set $S$ in Figure \ref{1/2,1/2.-1-2}, using (\ref{fbeta}), we have the following results.
\begin{itemize}
\item For $S=\{(2,-1,-1),(0,1,-1)\}$, we  have $f_\beta=x^3+3xy^2$
\item For $S=\{(2,-1,-1),(-1,2,-1)\}$, we have $f_\beta=x^3+y^3$
\item For $S=\{(1,0,-1),(0,1,-1)\}$, we have $f_\beta=x^2y+xy^2$
\item For $S=\{(1,0,-1),(-1,2,-1)\}$, we have $f_\beta=3x^2y+y^3$
\end{itemize}
{\bf Warning!} Here we have to double check the moment matrices of these polynomials, and we have
$${\frak m}(x^2y+xy^2)=\begin{pmatrix}\frac{1}{2} & 1 & 0\cr 1 & \frac{1}{2} & 0\cr 0 & 0 & -1\end{pmatrix}$$
is NOT diagonal, so this is not a solution in $Z_\beta\cap {\frak m}^{-1}(\beta)$. For all the others, we can check that they have moment matrix $\beta$.

For $\beta={\displaystyle{\left(\frac{1}{2},0,-\frac{1}{2}\right)}}$ we have $f_{\beta}=x^2z+xy^2$ whose moment matrix is $\beta$.

For the $\beta=(0,0,0)$, since the inner product of $(0,0,0)$ with any other point is $0$, so $Z_\beta={\bf P}_3^3$, we have that any line passes through $\beta$ is perpendicular to $\beta$. So there are 3 sets of cardinality $2$ satisfies that $\tau(S)=\beta$ and $\beta\in {\cal C}(S)^\circ$, as the three gray segments in Figure \ref{0,0,0}.

\begin{figure}[H]

\centering
\begin{tikzpicture}
\coordinate [label=left:${}$] (A) at (0,0);
\coordinate [label=right:${}$] (B) at (5,0);
\draw (A) -- (B);

\node [label=above:${}$] (X) at
($ (A) !  .5!(B) $) {};

\coordinate [label=above:${}$] (D) at
($ (A) ! .5 ! (B) ! {sin(60)*2} ! 90:(B) $) {};

\draw (A) -- (D) -- (B);

\coordinate (F) at
($ (D) ! 2/3!(X)   $) {};

\coordinate [label=below:${(-1,0,1)}$] (E) at ($ (A)! 1/3!(B) $) {};
\coordinate [label=below:${(-1,1,0)}$] (C) at ($ (A)! 2/3!(B) $) {};
\coordinate [label=left:${(0,-1,1)}$] (G) at ($ (A)! 1/3!(D) $) {};
\coordinate [label=left:${(1,-1,0)}$] (H) at ($ (A)! 2/3!(D) $) {};
\coordinate  [label=right:${(1,0,-1)}$] (I) at ($ (D)! 1/3!(B) $) {};
\coordinate  [label=right:${(0,1,-1)}$] (J) at ($ (D)! 2/3!(B) $) {};
\coordinate   (M) at ($ (D)! 1/2!(B) $) {};

\draw [line width=3pt,gray] (H)--(C);
\draw [line width=3pt,gray](I)--(E);
\draw [line width=3pt,gray] (J)--(G);

\foreach \point in {C,E,G,H,I,J}
\fill [black] (\point) circle (3pt);

\foreach \point in {F}
\fill [white] (\point) circle (4pt);
\foreach \point in {F}
\draw  (\point) circle (4pt);

\end{tikzpicture}

\caption{$\beta= {\displaystyle{\left(0,0,0\right)}}$\label{0,0,0}}
\end{figure}

Using (\ref{fbeta}), we have
\begin{itemize}
\item For $S=\{(1,-1,0),(-1,1,0)\}$, we  have $f_\beta=x^2z+y^2z$
\item For $S=\{(1,0,-1),(-1,0,1)\}$, we have $f_\beta=x^2y+z^2y$
\item For $S=\{(0,-1,1),(0,1,-1)\}$, we have $f_\beta=xz^2+xy^2$
\end{itemize}

All three cubics have moment matrix ${\rm diag}(0,0,0)$, they are all solutions, and they are ${\frak S}_3$-symmetric.

Last we consider ${\mathbb A}^{\cal B}_{+,3}$. The affine cone of any three points that are affinely independent is a 2-plane, so it must be the hyperplane $\lambda_1+\lambda_2+\lambda_3=0$ itself. Thus if $\sharp(S)=3$ such that $S$ is affinely independent, $\tau(S)=\beta$ and $\beta\in {\cal C}(S)^\circ$, then $\beta=(0,0,0)$. After running the same process as above using (\ref{fbeta}), and check if the moment matrix of the $f_\beta$ equals to ${\rm diag}(0,0,0)$, we have only one solution
$$f_\beta=x^3+y^3+z^3.$$

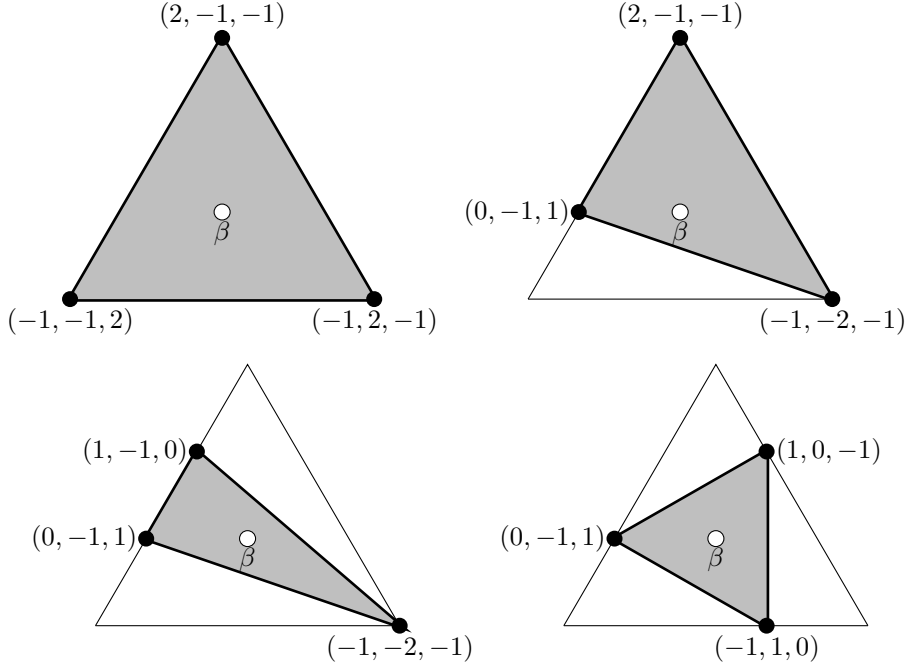
\begin{figure}[H]

\centering
\begin{tikzpicture}
\coordinate [label=below:${(-1,-1,2)}$] (A) at (1,0);
\coordinate [label=below:${(-1,2,-1)}$] (B) at (5,0);
\draw (A) -- (B);

\node [label=above:${}$] (X) at
($ (A) !  .5!(B) $) {};

\coordinate [label=above:${(2,-1,-1)}$] (D) at
($ (A) ! .5 ! (B) ! {sin(60)*2} ! 90:(B) $) {};

\draw (A) -- (D) -- (B);

\coordinate (F) at
($ (D) ! 2/3!(X)   $) {};

\coordinate [label=below:${}$] (E) at ($ (A)! 1/3!(B) $) {};
\coordinate [label=below:${}$] (C) at ($ (A)! 2/3!(B) $) {};
\coordinate [label=left:${}$] (G) at ($ (A)! 1/3!(D) $) {};
\coordinate [label=left:${}$] (H) at ($ (A)! 2/3!(D) $) {};
\coordinate  [label=right:${}$] (I) at ($ (D)! 1/3!(B) $) {};
\coordinate  [label=right:${}$] (J) at ($ (D)! 2/3!(B) $) {};

\draw[line width=2pt](A) -- (B)  -- (D) -- cycle;

\fill[gray!50](A) -- (B) -- (D) -- cycle;

\coordinate [label=below:${\beta}$] (F) at (F);
\foreach \point in {A,B,D}
\fill [black] (\point) circle (3pt);

\fill [white] (F) circle (3pt);
\draw  (F) circle (3pt);

\end{tikzpicture}
\begin{tikzpicture}
\coordinate [label=left:${}$] (A) at (1,0);
\coordinate [label=below:${(-1,-2,-1)}$] (B) at (5,0);
\draw (A) -- (B);

\coordinate [label=above:${(2,-1,-1)}$] (D) at
($ (A) ! .5 ! (B) ! {sin(60)*2} ! 90:(B) $) {};

\draw (A) -- (D) -- (B);

\coordinate (F) at
($ (D) ! 2/3!(X)   $) {};

\coordinate [label=below:${}$] (E) at ($ (A)! 1/3!(B) $) {};
\coordinate [label=below:${}$] (C) at ($ (A)! 2/3!(B) $) {};
\coordinate [label=left:${(0,-1,1)}$] (G) at ($ (A)! 1/3!(D) $) {};
\coordinate [label=left:${}$] (H) at ($ (A)! 2/3!(D) $) {};
\coordinate  [label=right:${}$] (I) at ($ (D)! 1/3!(B) $) {};
\coordinate  [label=right:${}$] (J) at ($ (D)! 2/3!(B) $) {};

\draw[line width=2pt](B) -- (G)  -- (D) -- cycle;

\fill[gray!50](B) -- (G) -- (D) -- cycle;

\coordinate [label=below:${\beta}$] (F) at (F);
\foreach \point in {B,G,D}
\fill [black] (\point) circle (3pt);

\fill [white] (F) circle (3pt);
\draw  (F) circle (3pt);

\end{tikzpicture}
\begin{tikzpicture}
\coordinate [label=left:${}$] (A) at (1,0);
\coordinate [label=below:${(-1,-2,-1)}$] (B) at (5,0);
\draw (A) -- (B);

\node [label=above:${}$] (X) at
($ (A) !  .5!(B) $) {};

\coordinate [label=above:${}$] (D) at
($ (A) ! .5 ! (B) ! {sin(60)*2} ! 90:(B) $) {};

\draw (A) -- (D) -- (B);

\coordinate (F) at
($ (D) ! 2/3!(X)   $) {};

\coordinate [label=below:${}$] (E) at ($ (A)! 1/3!(B) $) {};
\coordinate [label=below:${}$] (C) at ($ (A)! 2/3!(B) $) {};
\coordinate [label=left:${(0,-1,1)}$] (G) at ($ (A)! 1/3!(D) $) {};
\coordinate [label=left:${(1,-1,0)}$] (H) at ($ (A)! 2/3!(D) $) {};
\coordinate  [label=right:${}$] (I) at ($ (D)! 1/3!(B) $) {};
\coordinate  [label=right:${}$] (J) at ($ (D)! 2/3!(B) $) {};
\coordinate   (M) at ($ (D)! 1/2!(B) $) {};

\draw[line width=2pt](B) -- (G)  -- (H) -- cycle;

\fill[gray!50](B) -- (G) -- (H) -- cycle;

\coordinate [label=below:${\beta}$] (F) at (F);

\foreach \point in {B,G,H}
\fill [black] (\point) circle (3pt);

\fill [white] (F) circle (3pt);
\draw  (F) circle (3pt);

\end{tikzpicture}
\begin{tikzpicture}
\coordinate [label=left:${}$] (A) at (1,0);
\coordinate [label=right:${}$] (B) at (5,0);
\draw (A) -- (B);

\node [label=above:${}$] (X) at
($ (A) !  .5!(B) $) {};

\coordinate [label=above:${}$] (D) at
($ (A) ! .5 ! (B) ! {sin(60)*2} ! 90:(B) $) {};

\draw (A) -- (D) -- (B);

\coordinate (F) at
($ (D) ! 2/3!(X)   $) {};

\coordinate [label=below:${}$] (E) at ($ (A)! 1/3!(B) $) {};
\coordinate [label=below:${(-1,1,0)}$] (C) at ($ (A)! 2/3!(B) $) {};
\coordinate [label=left:${(0,-1,1)}$] (G) at ($ (A)! 1/3!(D) $) {};
\coordinate [label=left:${}$] (H) at ($ (A)! 2/3!(D) $) {};
\coordinate  [label=right:${(1,0,-1)}$] (I) at ($ (D)! 1/3!(B) $) {};
\coordinate  [label=right:${}$] (J) at ($ (D)! 2/3!(B) $) {};
\coordinate   (M) at ($ (D)! 1/2!(B) $) {};

\draw[line width=2pt](G) -- (I)  -- (C) -- cycle;

\fill[gray!50](G) -- (I) -- (C) -- cycle;

\coordinate [label=below:${\beta}$] (F) at (F);

\foreach \point in {G,I,C}
\fill [black] (\point) circle (3pt);

\fill [white] (F) circle (3pt);
\draw  (F) circle (3pt);

\end{tikzpicture}

\caption{The set $\beta=(0,0,0)$ as element in  ${\mathbb A}^{\cal B}_{+,3}$}
\end{figure}

\section{Stability of Cubic Surfaces}
\label{SOCS}

This time we consider the situation $n=4$ and $d=3$. Let $R_4={\mathbb C}[x,y,z,w]$ be the polynomial ring of four variables $x,y,z,w$, let ${\bf x}=(x,y,z,w)$. The homogeneous degree 3 part $R_4^3$ is a vector space generated by the set of monomials 
\begin{align*}
{\mathscr M}_4= & \{{{w}^{3}},{{w}^{2}} x,w {{x}^{2}},{{x}^{3}},{{w}^{2}} y,w x y,{{x}^{2}} y,w {{y}^{2}},x {{y}^{2}},{{y}^{3}},{{w}^{2}} z,w x z,{{x}^{2}} z,w y z,x y z,{{y}^{2}} z, \\ & w {{z}^{2}},x {{z}^{2}},y {{z}^{2}},{{z}^{3}}\}. \end{align*}

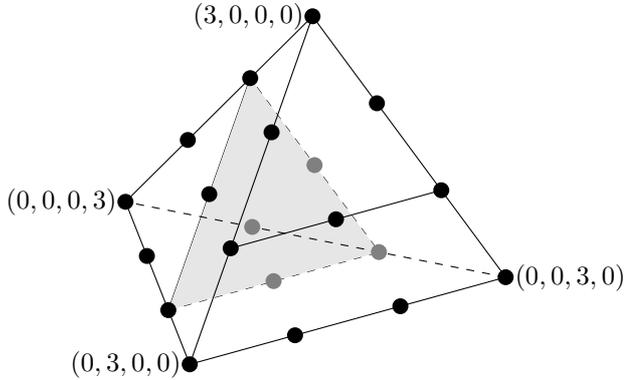
\begin{figure}[H]

\begin{tikzpicture}

\coordinate (A) at (0,0,0);
\coordinate [label=right:${(0,0,3,0)}$] (B) at (3,0,0);
\coordinate [label=left:${(0,0,0,3)}$](C) at (-2,1,0);
\coordinate [label=left:${(0,3,0,0)}$](D) at (0,0,3);
\coordinate (E) at (3,3,0);
\coordinate (F) at (0,3,3);
\coordinate (G) at (3,0,3);
\coordinate [label=left:${(3,0,0,0)}$](H) at (2,5,4);

\coordinate (Y) at (barycentric cs:B=1,C=1,D=1,H=1);
\coordinate (A1) at (barycentric cs:C=1,H=2);
\coordinate (A2) at (barycentric cs:C=2,H=1);
\coordinate (A3) at (barycentric cs:B=1,H=2);
\coordinate (A4) at (barycentric cs:B=2,H=1);
\coordinate (A5) at (barycentric cs:D=2,H=1);
\coordinate (A6) at (barycentric cs:D=1,H=2);
\coordinate (A7) at (barycentric cs:D=1,B=2);
\coordinate (A8) at (barycentric cs:D=2,B=1);
\coordinate (A9) at (barycentric cs:D=1,C=2);
\coordinate (B1) at (barycentric cs:D=2,C=1);
\coordinate (B2) at (barycentric cs:B=2,C=1);
\coordinate (B3) at (barycentric cs:B=1,C=2);
\coordinate (B4) at (barycentric cs:C=1,D=1,H=1);
\coordinate (B5) at (barycentric cs:B=1,D=1,H=1);
\coordinate (B6) at (barycentric cs:B=1,C=1,H=1);
\coordinate (B7) at (barycentric cs:B=1,C=1,D=1);
\draw (A1)--(B1);
\draw [dashed] (B1)--(B2)--(A1);
\fill [gray!20] (B1)--(B2)--(A1)--cycle;
\draw (A4)--(A5);

\draw [dashed] (B)--(C);

\draw (B)--(D);

\draw  (H)--(D);

\draw  (H)--(C);

\draw  (C)--(D);

\draw  (H)--(B);

\foreach \point in {B,C,D,H,A1,A2,A3,A4,A5,A6,A7,A8,A9,B1,B4,B5}
\fill [black] (\point) circle (3pt);

\foreach \point in {B2,B3,B7,B6}
\fill [gray] (\point) circle (3pt);

\end{tikzpicture}
\caption{The set $H({\mathscr M}_4)$}

\end{figure}

We consider the ${\rm SL}(4,{\mathbb C})$(and ${\rm SU}(4)$)-action on the projective space ${\bf P}_4^3$. The elements in the set ${\mathscr W}_4=\{(i_1,i_2,i_3,i_4)\in {\mathbb Z}^4_+ \ | \ i_1+i_2+i_3+i_4 =3 \}$ have four coordinates. How ever they all lie in a hyperplane defined by $i_1+i_2+i_3+i_4 =3$, so we can draw them in three space.

The set of ${\mathbb A}=\{{\frak m}({\bf x}^\alpha)\ | \ \alpha \in {\mathscr W}_4\}$ is as Figure \ref{WETSS}. The shaded part is the Weyl chamber.
\begin{figure}[h]

\begin{tikzpicture}

\coordinate (A) at (0,0,0);
\coordinate [label=right:${(-\frac{3}{4},-\frac{3}{4},\frac{9}{4},-\frac{3}{4})}$] (B) at (6,0,0);
\coordinate [label=left:${(-\frac{3}{4},-\frac{3}{4},-\frac{3}{4},\frac{9}{4})}$](C) at (-4,2,0);
\coordinate [label=left:${(-\frac{3}{4},\frac{9}{4},-\frac{3}{4},-\frac{3}{4})}$](D) at (0,0,6);
\coordinate (E) at (6,6,0);
\coordinate (F) at (0,6,6);
\coordinate (G) at (6,0,6);
\coordinate [label=left:${(\frac{9}{4},-\frac{3}{4},-\frac{3}{4},-\frac{3}{4})}$](H) at (4,10,8);

\coordinate [label=below:${(0,0,0,0)}$](Y) at (barycentric cs:B=1,C=1,D=1,H=1);

\coordinate (A1) at (barycentric cs:C=1,H=2);
\coordinate (A2) at (barycentric cs:C=2,H=1);
\coordinate (A3) at (barycentric cs:B=1,H=2);
\coordinate (A4) at (barycentric cs:B=2,H=1);
\coordinate (A5) at (barycentric cs:D=2,H=1);
\coordinate (A6) at (barycentric cs:D=1,H=2);
\coordinate (A7) at (barycentric cs:D=1,B=2);
\coordinate (A8) at (barycentric cs:D=2,B=1);
\coordinate (A9) at (barycentric cs:D=1,C=2);
\coordinate (B1) at (barycentric cs:D=2,C=1);
\coordinate (B2) at (barycentric cs:B=2,C=1);
\coordinate (B3) at (barycentric cs:B=1,C=2);
\coordinate (B4) at (barycentric cs:C=1,D=1,H=1);
\coordinate (B5) at (barycentric cs:B=1,D=1,H=1);
\coordinate (B6) at (barycentric cs:B=1,C=1,H=1);
\coordinate (B7) at (barycentric cs:B=1,C=1,D=1);
\coordinate (B8) at (barycentric cs:C=1,D=1);
\coordinate (B9) at (barycentric cs:H=1,D=1);
\draw (B8)--(H);
\draw [dashed] (B8)--(B);
\fill [gray!20] (B8)--(B)--(H)--cycle;

\draw [dashed] (B)--(C);

\draw (B)--(D);

\draw  (H)--(D);

\draw  (H)--(C);

\draw  (C)--(D);

\draw  (H)--(B);

\fill [gray!60] (B9)--(H)--(Y)--cycle;
\fill [gray!60] (B5)--(H)--(Y)--cycle;
\draw [dashed] (H)--(B7);
\foreach \point in {B,C,D,H,A1,A2,A3,A4,A5,A6,A7,A8,A9,B1,B4,B5}
\fill [black] (\point) circle (3pt);

\foreach \point in {B2,B3,B7,B6}
\fill [gray] (\point) circle (3pt);

\draw  (H)--(B5)--(B9);
\draw [dashed] (Y)--(B5);
\draw [dashed] (Y)--(B9);
\foreach \point in {B9,Y}
\fill [white] (\point) circle(3pt);
\foreach \point in {B9,Y}
\draw (\point) circle (3pt);

\coordinate [label=below:${(0,0,0,0)}$] (Y) at (Y);

\end{tikzpicture}

\caption{The set ${\mathbb A}$ for cubic surfaces\label{WETSS}}
\end{figure}
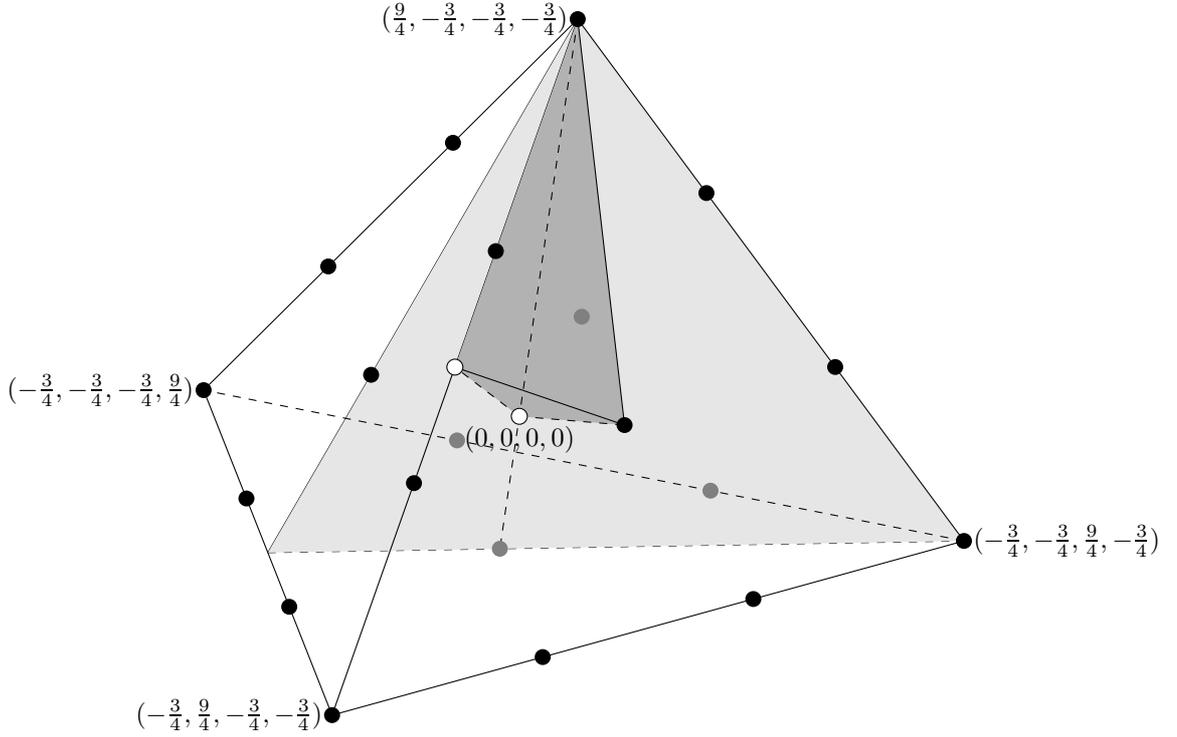

Let us first discuss ${\displaystyle\beta ={\left(\frac{1}{4},\frac{1}{4},\frac{1}{4},-\frac{3}{4}\right)}}$. First $\beta\in {\mathbb A}$ lies on the boundary of the weight polytope ${\cal C}({\mathbb A})$.
 In fact it also lies in the triangle generated by ${\displaystyle{\left(\frac{1}{4},\frac{1}{4},\frac{1}{4},-\frac{3}{4}\right)}}$, ${\displaystyle{\left(-\frac{3}{4},-\frac{3}{4},\frac{9}{4},-\frac{3}{4}\right)}}$ and $\displaystyle{\left(-\frac{3}{4},\frac{9}{4},-\frac{3}{4},-\frac{3}{4}\right)}$.
 They correspond to the monomials $x^3$, $y^3$ and $z^3$.
  Thus if $f\in {\frak m}^{-1}(\beta)$, then $f$ is a convex combination of the monomials that lie in the triangle generated by $x^3$, $y^3$ and $z^3$.
   This means the expression of $f$ only contains three variables $x,y,z$. According to \cite{Ness}, $f$ comes from cubic curves. In precise, the vector space $R_3^3$ is a subspace of $R_4^3$, and this gives an embedding ${\bf P}_3^3\hookrightarrow {\bf P}_4^3$.
    Let $H'$ and ${\frak m}'$ be the Hessian and moment map of ${\bf P}_3^3$ respectively, let $H$ and ${\frak m}$ be the Hessian and moment map of ${\bf P}_4^4$. If $f\in {\bf P}_3^3\hookrightarrow {\bf P}_4^3$, then 
\begin{equation}\label{m3m4}
H = \begin{pmatrix}
H' &  \\
 & 0
\end{pmatrix} \quad \mbox{and} \quad 
{\frak m}(f)=\begin{pmatrix}
{\frak m}'(f) & \\
 & -1
\end{pmatrix} + \frac{1}{4}\, I_{4\times 4}.
\end{equation}
The element ${\displaystyle\beta ={\left(\frac{1}{4},\frac{1}{4},\frac{1}{4},-\frac{3}{4}\right)}}$ can be regarded as a element in ${\mathbb A}^{\cal B}_{1,+}$, ${\mathbb A}^{\cal B}_{2,+}$ and ${\mathbb A}^{\cal B}_{3,+}$ for both ${\frak m}$ and ${\frak m}'$.

\begin{figure}[H]

\centering
\begin{tikzpicture}

\coordinate (A) at (0,0,0);
\coordinate [label=right:${(-\frac{3}{4},-\frac{3}{4},\frac{9}{4},-\frac{3}{4})}$] (B) at (3,0,0);
\coordinate [label=left:${(-\frac{3}{4},-\frac{3}{4},-\frac{3}{4},\frac{9}{4})}$](C) at (-2,1,0);
\coordinate [label=left:${(-\frac{3}{4},\frac{9}{4},-\frac{3}{4},-\frac{3}{4})}$](D) at (0,0,3);
\coordinate (E) at (3,3,0);
\coordinate (F) at (0,3,3);
\coordinate (G) at (3,0,3);
\coordinate [label=left:${(\frac{9}{4},-\frac{3}{4},-\frac{3}{4},-\frac{3}{4})}$](H) at (2,5,4);

\coordinate (A3) at (barycentric cs:B=1,H=2);
\coordinate (A8) at (barycentric cs:D=2,B=1);

\fill [gray!10] (A3)--(A8)--(C)--cycle;

\fill [gray!40] (H)--(B)--(D)--cycle;

\coordinate [label=below:${(0,0,0,0)}$](Y) at (barycentric cs:B=1,C=1,D=1,H=1);
\coordinate [label=left:${}$](A1) at (barycentric cs:C=1,H=2);
\coordinate [label=above:${}$](A2) at (barycentric cs:C=2,H=1);
\coordinate [label=above:${(\frac{5}{4},-\frac{3}{4},\frac{1}{4},-\frac{3}{4})}$](A3) at (barycentric cs:B=1,H=2);
\coordinate [label=above:${}$](A4) at (barycentric cs:B=2,H=1);
\coordinate [label=below:${}$](A5) at (barycentric cs:D=2,H=1);
\coordinate [label=above:${}$](A6) at (barycentric cs:D=1,H=2);
\coordinate [label=below:${}$](A7) at (barycentric cs:D=1,B=2);
\coordinate [label=below:${(-\frac{3}{4},\frac{5}{4},\frac{1}{4},-\frac{3}{4})}$](A8) at (barycentric cs:D=2,B=1);
\coordinate [label=above:${}$](A9) at (barycentric cs:D=1,C=2);
\coordinate [label=left:${}$](B1) at (barycentric cs:D=2,C=1);
\coordinate [label=above:${}$](B2) at (barycentric cs:B=2,C=1);
\coordinate [label=above:${}$](B3) at (barycentric cs:B=1,C=2);
\coordinate [label=above:${}$](B4) at (barycentric cs:C=1,D=1,H=1);
\coordinate [label=right:${}$](B5) at (barycentric cs:B=1,D=1,H=1);
\coordinate [label=above:${}$](B6) at (barycentric cs:B=1,C=1,H=1);
\coordinate [label=above:${}$](B7) at (barycentric cs:B=1,C=1,D=1);
\coordinate [label=above:$\beta $] (Z)  at (barycentric cs:A6=1,A7=1);

\draw [dashed] (B)--(C);

\draw (B)--(D);

\draw  (H)--(D);

\draw  (H)--(C);

\draw [dashed] (A3)--(C);

\draw [dashed] (A8)--(C);

\draw [dashed] (Y)--(C);

\draw  (C)--(D);

\draw  (H)--(B);

\draw [dashed] (Z)--(Y);

\draw [line width=3pt,gray] (A3)--(A8);

\foreach \point in {A3,A8,H,B,D}
\fill [black] (\point) circle (3pt);

\foreach \point in {Y}
\fill [gray] (\point) circle (3pt);

\foreach \point in {Z}
\fill [white] (\point) circle (4pt);
\foreach \point in {Z}
\draw  (\point) circle (4pt);

\end{tikzpicture}
\caption{$\beta= {\displaystyle{\left(\frac{1}{4},\frac{1}{4},\frac{1}{4},-\frac{3}{4}\right)}}$}
\end{figure}

For ${\frak m}'$, we have computed the solutions $f_{\beta'}$ for $\beta'=(0,0,0)$ in the previous section. For $\beta' \in {\mathbb A}^{\cal B}_{1,+}$ we have $f_{\beta'}=xyz$, and ${\frak m}'(f_{\beta'})={\rm diag}(0,0,0)$. By (\ref{m3m4}) we have ${\frak m}(f_{\beta'})=\beta$. The same results are ture for ${\beta'} \in {\mathbb A}^{\cal B}_{2,+}$ and ${\beta'} \in {\mathbb A}^{\cal B}_{3,+}$. As the examples in Figure 11, we have that for $f_{\beta'}=f_\beta = x^2y+z^2y$ and $f_\beta'=f_\beta = x^3+y^3+z^3$, we have ${\frak m}'(f_\beta')=\beta'$ and ${\frak m}(f)=\beta={\displaystyle{\left(\frac{1}{4},\frac{1}{4},\frac{1}{4},-\frac{3}{4}\right)}}$. 

Since $\beta \neq {\rm diag}(0,0,0,0)$, we have if $f_{\beta} \in {\bf P}_4^3$, then $f_\beta \in ({\bf P}_4^3)^{\rm us}$, but for the same $f_{\beta'}\in {\bf P}_3^3$, we have $f_{\beta'}\in ({\bf P}_3^3)^{\rm ss}$ because $M(f_{\beta'})=\|\beta'\|=0$. Thus the stability changes for these critical points of the corresponding moment maps.

Another example we see $\beta$ lies in the boundary of ${\cal C}(\mathbb A)$ is when $\beta = {\displaystyle{\left(\frac{5}{4},-\frac{1}{4},-\frac{1}{4},-\frac{3}{4}\right)}}$.

\begin{figure}[H]

\centering
\begin{tikzpicture}

\coordinate (A) at (0,0,0);
\coordinate [label=right:${(-\frac{3}{4},-\frac{3}{4},\frac{9}{4},-\frac{3}{4})}$] (B) at (3,0,0);
\coordinate [label=left:${(-\frac{3}{4},-\frac{3}{4},-\frac{3}{4},\frac{9}{4})}$](C) at (-2,1,0);
\coordinate [label=left:${(-\frac{3}{4},\frac{9}{4},-\frac{3}{4},-\frac{3}{4})}$](D) at (0,0,3);
\coordinate (E) at (3,3,0);
\coordinate (F) at (0,3,3);
\coordinate (G) at (3,0,3);
\coordinate [label=left:${(\frac{9}{4},-\frac{3}{4},-\frac{3}{4},-\frac{3}{4})}$](H) at (2,5,4);

\coordinate [label=below:${(0,0,0,0)}$](Y) at (barycentric cs:B=1,C=1,D=1,H=1);
\coordinate [label=left:${}$](A1) at (barycentric cs:C=1,H=2);
\coordinate [label=above:${}$](A2) at (barycentric cs:C=2,H=1);
\coordinate [label=right:${(\frac{5}{4},-\frac{3}{4},\frac{1}{4},-\frac{3}{4})}$](A3) at (barycentric cs:B=1,H=2);
\coordinate [label=above:${}$](A4) at (barycentric cs:B=2,H=1);
\coordinate [label=below:${}$](A5) at (barycentric cs:D=2,H=1);
\coordinate [label=left:${(\frac{5}{4},\frac{1}{4},-\frac{3}{4},-\frac{3}{4})}$](A6) at (barycentric cs:D=1,H=2);
\coordinate [label=above:${}$](A7) at (barycentric cs:D=1,B=2);
\coordinate [label=below:${}$](A8) at (barycentric cs:D=2,B=1);
\coordinate [label=above:${}$](A9) at (barycentric cs:D=1,C=2);
\coordinate [label=left:${}$](B1) at (barycentric cs:D=2,C=1);
\coordinate [label=above:${}$](B2) at (barycentric cs:B=2,C=1);
\coordinate [label=above:${}$](B3) at (barycentric cs:B=1,C=2);
\coordinate [label=above:${}$](B4) at (barycentric cs:C=1,D=1,H=1);
\coordinate [label=right:${\left(\frac{1}{4},\frac{1}{4},\frac{1}{4},-\frac{3}{4}\right)}$](B5) at (barycentric cs:B=1,D=1,H=1);
\coordinate [label=above:${}$](B6) at (barycentric cs:B=1,C=1,H=1);
\coordinate [label=above:${}$](B7) at (barycentric cs:B=1,C=1,D=1);
\coordinate [label=above:$\beta $] (Z)  at (barycentric cs:A6=1,A3=1);

\fill [gray!20] (B5)--(C)--(Z)--cycle;

\draw [dashed] (B)--(C);

\draw (B)--(D);

\draw  (H)--(D);

\draw  (H)--(C);

\draw  (B5)--(Z);

\draw [dashed] (B5)--(C)--(Z);

\draw  (C)--(D);

\draw  (H)--(B);

\draw (Z)--(Y);

\draw [line width=3pt,gray] (A6)--(A3);

\foreach \point in {A6,A3,B5}
\fill [black] (\point) circle (3pt);

\foreach \point in {Y}
\fill [gray] (\point) circle (3pt);

\foreach \point in {Z}
\fill [white] (\point) circle (4pt);
\foreach \point in {Z}
\draw  (\point) circle (4pt);

\end{tikzpicture}
\caption{$\beta= {\displaystyle{\left(\frac{5}{4},-\frac{1}{4},-\frac{1}{4},-\frac{3}{4}\right)}}$}
\end{figure}

Let $O=(0,0,0,0)$ and $O'={{\displaystyle{\left(\frac{1}{4},\frac{1}{4},\frac{1}{4},-\frac{3}{4}\right)}}}$. Then $O'$ is the origin of ${\frak s \frak u}(3)$, the target of the moment map ${\frak m}'$. The line $OO'$ is perpendicular to the shaded plane, let us call the segment from $\displaystyle{\left(\frac{5}{4},-\frac{3}{4},\frac{1}{4},-\frac{3}{4}\right)}$ to  $\displaystyle{\left(\frac{5}{4},\frac{1}{4},-\frac{3}{4},-\frac{3}{4}\right)}$ be $l$. Thus $O\beta\perp l$ if and only if $O'\beta \perp l$. This implies that $\beta$ is a minimal combination that we have ``already computed" for ${\frak m}'$. 

From now on, we only consider those minimal combinations that lies in the relative interior of ${\cal C}(\mathbb A)$. The critical points which are solved from these minimal combinations will have all variables $x,y,z,w$ in their expressions.

We repeat the same process as before. For 
$\beta= {\displaystyle{\left(\frac{1}{4},-\frac{1}{12},-\frac{1}{12},-\frac{1}{12}\right)}}$, as in Figure 13, we have 
$$f_\beta = xw^2+2xyz.$$
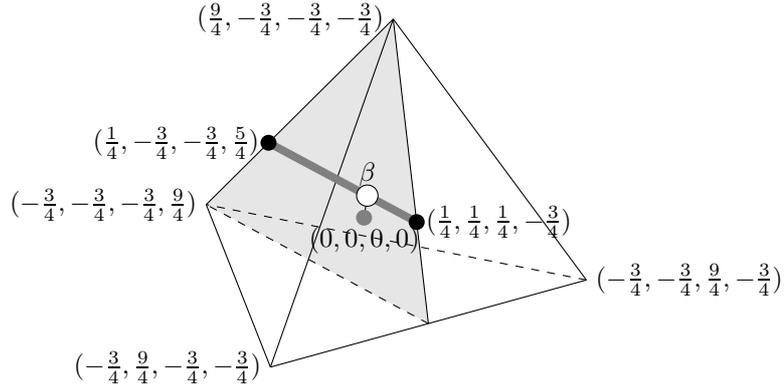
\begin{figure}[H]

\centering

\begin{tikzpicture}

\coordinate (A) at (0,0,0);
\coordinate [label=right:${(-\frac{3}{4},-\frac{3}{4},\frac{9}{4},-\frac{3}{4})}$] (B) at (3,0,0);
\coordinate [label=left:${(-\frac{3}{4},-\frac{3}{4},-\frac{3}{4},\frac{9}{4})}$](C) at (-2,1,0);
\coordinate [label=left:${(-\frac{3}{4},\frac{9}{4},-\frac{3}{4},-\frac{3}{4})}$](D) at (0,0,3);
\coordinate (E) at (3,3,0);
\coordinate (F) at (0,3,3);
\coordinate (G) at (3,0,3);
\coordinate [label=left:${(\frac{9}{4},-\frac{3}{4},-\frac{3}{4},-\frac{3}{4})}$](H) at (2,5,4);

\coordinate [label=below:${(0,0,0,0)}$](Y) at (barycentric cs:B=1,C=1,D=1,H=1);
\coordinate [label=above:${}$](A1) at (barycentric cs:C=1,H=2);
\coordinate [label=left:${(\frac{1}{4},-\frac{3}{4},-\frac{3}{4},\frac{5}{4})}$](A2) at (barycentric cs:C=2,H=1);
\coordinate [label=above:${}$](A3) at (barycentric cs:B=1,H=2);
\coordinate [label=above:${}$](A4) at (barycentric cs:B=2,H=1);
\coordinate [label=below:${}$](A5) at (barycentric cs:D=2,H=1);
\coordinate [label=above:${}$](A6) at (barycentric cs:D=1,H=2);
\coordinate [label=above:${}$](A7) at (barycentric cs:D=1,B=2);
\coordinate [label=above:${}$](A8) at (barycentric cs:D=2,B=1);
\coordinate [label=above:${}$](A9) at (barycentric cs:D=1,C=2);
\coordinate [label=above:${}$](B1) at (barycentric cs:D=2,C=1);
\coordinate [label=above:${}$](B2) at (barycentric cs:B=2,C=1);
\coordinate [label=above:${}$](B3) at (barycentric cs:B=1,C=2);
\coordinate [label=above:${}$](B4) at (barycentric cs:C=1,D=1,H=1);
\coordinate [label=right:${(\frac{1}{4},\frac{1}{4},\frac{1}{4},-\frac{3}{4})}$](B5) at (barycentric cs:B=1,D=1,H=1);
\coordinate [label=above:${}$](B6) at (barycentric cs:B=1,C=1,H=1);
\coordinate [label=above:${}$](B7) at (barycentric cs:B=1,C=1,D=1);
\coordinate [label=above:$\beta $] (Z)  at (barycentric cs:B5=2,A2=1);
\coordinate [label=above:${}$](C1) at (barycentric cs:B=1,D=1);

\fill [gray!20] (C1)--(H)--(C)--cycle;

\draw (C1)--(H);

\draw [dashed] (C1)--(C);

\draw [dashed] (B)--(C);

\draw (B)--(D);

\draw  (H)--(D);

\draw  (H)--(C);

\draw  (C)--(D);

\draw  (H)--(B);

\draw (Z)--(Y);

\draw [line width=3pt,gray] (A2)--(B5);

\foreach \point in {B5,A2}
\fill [black] (\point) circle (3pt);

\foreach \point in {Y}
\fill [gray] (\point) circle (3pt);

\foreach \point in {Z}
\fill [white] (\point) circle (4pt);
\foreach \point in {Z}
\draw  (\point) circle (4pt);

\coordinate [label=above:$\beta $] (Z)  at (Z);
\coordinate [label=below:${(0,0,0,0)}$](Y) at (Y);

\end{tikzpicture}

\caption{$\beta= {\displaystyle{\left(\frac{1}{4},-\frac{1}{12},-\frac{1}{12},-\frac{1}{12}\right)}}$}
\end{figure}

There are 2 other solutions
$$xz^2+2xyw \quad \mbox{and}\quad xy^2+2xzw$$ which are ${\rm GL(4,{\mathbb C})}$(in fact ${\frak S}_4$)-isomorphic to $xw^2+2xyz$ for $\beta\in {\mathbb A}^{\cal B}_{+,2}$.

For $\beta= {\displaystyle{\left(\frac{1}{2},0,0,-\frac{1}{2}\right)}}$ we have 
$$f_\beta = \sqrt{6}xyz + x^2w$$ up to permutation of variables, and 
for $\beta= {\displaystyle{\left(\frac{1}{4},\frac{1}{4},-\frac{1}{4},-\frac{1}{4}\right)}}$ we have 
$$f_\beta = x^2y+z^2w$$ up to permutation of variables. Note that we can solve a solution $f_\beta=xyz+yzw$, but it does not have a diagonal moment matrix, so we skip this answer.

\begin{figure}[H]

\centering

\begin{tikzpicture}

\coordinate (A) at (0,0,0);
\coordinate [label=right:${(-\frac{3}{4},-\frac{3}{4},\frac{9}{4},-\frac{3}{4})}$] (B) at (3,0,0);
\coordinate [label=left:${(-\frac{3}{4},-\frac{3}{4},-\frac{3}{4},\frac{9}{4})}$](C) at (-2,1,0);
\coordinate [label=left:${(-\frac{3}{4},\frac{9}{4},-\frac{3}{4},-\frac{3}{4})}$](D) at (0,0,3);
\coordinate (E) at (3,3,0);
\coordinate (F) at (0,3,3);
\coordinate (G) at (3,0,3);
\coordinate [label=left:${(\frac{9}{4},-\frac{3}{4},-\frac{3}{4},-\frac{3}{4})}$](H) at (2,5,4);

\coordinate [label=below:${(0,0,0,0)}$](Y) at (barycentric cs:B=1,C=1,D=1,H=1);
\coordinate [label=left:${(\frac{5}{4},-\frac{3}{4},-\frac{3}{4},\frac{1}{4})}$](A1) at (barycentric cs:C=1,H=2);
\coordinate [label=above:${}$](A2) at (barycentric cs:C=2,H=1);
\coordinate [label=above:${}$](A3) at (barycentric cs:B=1,H=2);
\coordinate [label=above:${}$](A4) at (barycentric cs:B=2,H=1);
\coordinate [label=below:${}$](A5) at (barycentric cs:D=2,H=1);
\coordinate [label=above:${}$](A6) at (barycentric cs:D=1,H=2);
\coordinate [label=above:${}$](A7) at (barycentric cs:D=1,B=2);
\coordinate [label=above:${}$](A8) at (barycentric cs:D=2,B=1);
\coordinate [label=above:${}$](A9) at (barycentric cs:D=1,C=2);
\coordinate [label=above:${}$](B1) at (barycentric cs:D=2,C=1);
\coordinate [label=above:${}$](B2) at (barycentric cs:B=2,C=1);
\coordinate [label=above:${}$](B3) at (barycentric cs:B=1,C=2);
\coordinate [label=above:${}$](B4) at (barycentric cs:C=1,D=1,H=1);
\coordinate [label=right:${(\frac{1}{4},\frac{1}{4},\frac{1}{4},-\frac{3}{4})}$](B5) at (barycentric cs:B=1,D=1,H=1);
\coordinate [label=above:${}$](B6) at (barycentric cs:B=1,C=1,H=1);
\coordinate [label=above:${}$](B7) at (barycentric cs:B=1,C=1,D=1);
\coordinate [label=above:$\beta $] (Z)  at (barycentric cs:B5=3,A1=1);
\coordinate [label=above:${}$](C1) at (barycentric cs:B=1,D=1);

\fill [gray!20] (C1)--(H)--(C)--cycle;

\draw (C1)--(H);

\draw [dashed] (C1)--(C);

\draw [dashed] (B)--(C);

\draw (B)--(D);

\draw  (H)--(D);

\draw  (H)--(C);

\draw  (C)--(D);

\draw  (H)--(B);

\draw (Z)--(Y);

\draw [line width=3pt,gray] (A1)--(B5);

\foreach \point in {B5,A1}
\fill [black] (\point) circle (3pt);

\foreach \point in {Y}
\fill [gray] (\point) circle (3pt);

\foreach \point in {Z}
\fill [white] (\point) circle (4pt);
\foreach \point in {Z}
\draw  (\point) circle (4pt);
\coordinate [label=left:$\beta\ $] (Z)  at (Z);
\coordinate [label=below:${(0,0,0,0)}$](Y) at (Y);

\end{tikzpicture}
\caption{$\beta= {\displaystyle{\left(\frac{1}{2},0,0,-\frac{1}{2}\right)}}$}
\end{figure}
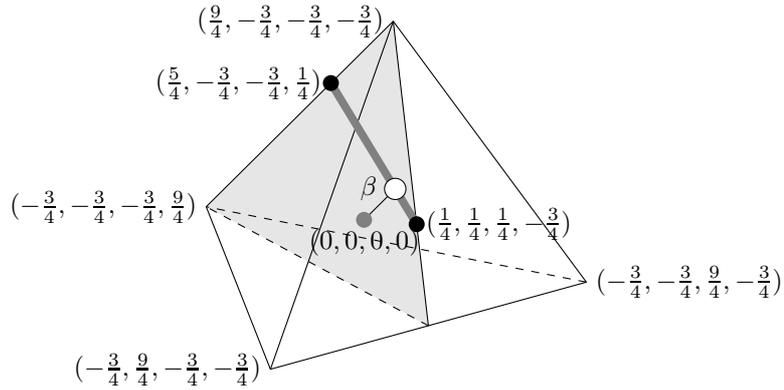

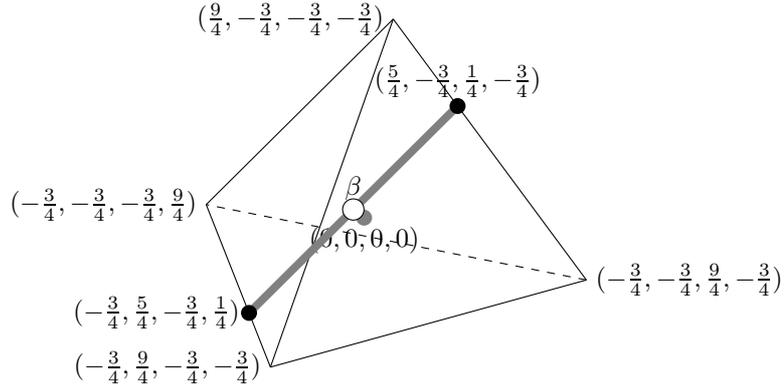
\begin{figure}[H]

\centering
\begin{tikzpicture}

\coordinate (A) at (0,0,0);
\coordinate [label=right:${(-\frac{3}{4},-\frac{3}{4},\frac{9}{4},-\frac{3}{4})}$] (B) at (3,0,0);
\coordinate [label=left:${(-\frac{3}{4},-\frac{3}{4},-\frac{3}{4},\frac{9}{4})}$](C) at (-2,1,0);
\coordinate [label=left:${(-\frac{3}{4},\frac{9}{4},-\frac{3}{4},-\frac{3}{4})}$](D) at (0,0,3);
\coordinate (E) at (3,3,0);
\coordinate (F) at (0,3,3);
\coordinate (G) at (3,0,3);
\coordinate [label=left:${(\frac{9}{4},-\frac{3}{4},-\frac{3}{4},-\frac{3}{4})}$](H) at (2,5,4);

\coordinate [label=below:${(0,0,0,0)}$](Y) at (barycentric cs:B=1,C=1,D=1,H=1);
\coordinate [label=left:${}$](A1) at (barycentric cs:C=1,H=2);
\coordinate [label=above:${}$](A2) at (barycentric cs:C=2,H=1);
\coordinate [label=above:${(\frac{5}{4},-\frac{3}{4},\frac{1}{4},-\frac{3}{4})}$](A3) at (barycentric cs:B=1,H=2);
\coordinate [label=above:${}$](A4) at (barycentric cs:B=2,H=1);
\coordinate [label=below:${}$](A5) at (barycentric cs:D=2,H=1);
\coordinate [label=above:${}$](A6) at (barycentric cs:D=1,H=2);
\coordinate [label=above:${}$](A7) at (barycentric cs:D=1,B=2);
\coordinate [label=above:${}$](A8) at (barycentric cs:D=2,B=1);
\coordinate [label=above:${}$](A9) at (barycentric cs:D=1,C=2);
\coordinate [label=left:${(-\frac{3}{4},\frac{5}{4},-\frac{3}{4},\frac{1}{4})}$](B1) at (barycentric cs:D=2,C=1);
\coordinate [label=above:${}$](B2) at (barycentric cs:B=2,C=1);
\coordinate [label=above:${}$](B3) at (barycentric cs:B=1,C=2);
\coordinate [label=above:${}$](B4) at (barycentric cs:C=1,D=1,H=1);
\coordinate [label=right:${}$](B5) at (barycentric cs:B=1,D=1,H=1);
\coordinate [label=above:${}$](B6) at (barycentric cs:B=1,C=1,H=1);
\coordinate [label=above:${}$](B7) at (barycentric cs:B=1,C=1,D=1);
\coordinate [label=above:$\beta $] (Z)  at (barycentric cs:B5=1,B4=1);

\draw [dashed] (B)--(C);

\draw (B)--(D);

\draw  (H)--(D);

\draw  (H)--(C);

\draw  (C)--(D);

\draw  (H)--(B);

\draw (Z)--(Y);

\draw [line width=3pt,gray] (B1)--(A3);

\foreach \point in {B1,A3}
\fill [black] (\point) circle (3pt);

\foreach \point in {Y}
\fill [gray] (\point) circle (3pt);

\foreach \point in {Z}
\fill [white] (\point) circle (4pt);
\foreach \point in {Z}
\draw  (\point) circle (4pt);

\end{tikzpicture}
\caption{$\beta= {\displaystyle{\left(\frac{1}{4},\frac{1}{4},-\frac{1}{4},-\frac{1}{4}\right)}}$}
\end{figure}

Let us consider ${\mathbb A}^{\cal B}_{+,3}$. As we have discussed for ${\mathbb A}^{\cal B}_{+,2}$, we do not have to consider those $\beta$'s that lie on the boundary of ${\cal C}({\mathbb A})$. Since the process is the same as before, we list the results under the corresponding pictures without reasoning, and skip those solutions which do not have diagonal moment matrices.

\begin{tikzpicture}

{\coordinate (A) at (0,0,0);
\coordinate [label=below:${(-\frac{3}{4},-\frac{3}{4},\frac{9}{4},-\frac{3}{4})}$] (B) at (3,0,0);
\coordinate (C) at (-2,1,0);
\coordinate [label=left:${(-\frac{3}{4},\frac{9}{4},-\frac{3}{4},-\frac{3}{4})}$](D) at (0,0,3);
\coordinate (E) at (3,3,0);
\coordinate (F) at (0,3,3);
\coordinate (G) at (3,0,3);
\coordinate (H) at (2,5,4);

\coordinate [label=below:${(0,0,0,0)}$](Y) at (barycentric cs:B=1,C=1,D=1,H=1);
\coordinate [label=left:${(\frac{5}{4},-\frac{3}{4},-\frac{3}{4},\frac{1}{4})}$](A1) at (barycentric cs:C=1,H=2);

\draw[line width=2pt](B) -- (D) -- (A1) -- cycle;

\fill[gray!50](B) -- (D) -- (A1) -- cycle;

\coordinate [label=above:${}$](A2) at (barycentric cs:C=2,H=1);
\coordinate [label=above:${}$](A3) at (barycentric cs:B=1,H=2);
\coordinate [label=above:${}$](A4) at (barycentric cs:B=2,H=1);
\coordinate [label=below:${}$](A5) at (barycentric cs:D=2,H=1);
\coordinate [label=above:${}$](A6) at (barycentric cs:D=1,H=2);
\coordinate [label=above:${}$](A7) at (barycentric cs:D=1,B=2);
\coordinate [label=above:${}$](A8) at (barycentric cs:D=2,B=1);
\coordinate [label=above:${}$](A9) at (barycentric cs:D=1,C=2);
\coordinate [label=above:${}$](B1) at (barycentric cs:D=2,C=1);
\coordinate [label=above:${}$](B2) at (barycentric cs:B=2,C=1);
\coordinate [label=above:${}$](B3) at (barycentric cs:B=1,C=2);
\coordinate [label=above:${}$](B4) at (barycentric cs:C=1,D=1,H=1);
\coordinate [label=above:${}$](B5) at (barycentric cs:B=1,D=1,H=1);
\coordinate [label=above:${}$](B6) at (barycentric cs:B=1,C=1,H=1);
\coordinate [label=above:${}$](B7) at (barycentric cs:B=1,C=1,D=1);
\coordinate [label=above:$\beta $] (Z)  at (barycentric cs:B=5,D=5,A1=9);

\draw [dashed] (B)--(C);

\draw (B)--(D);

\draw  (H)--(D);

\draw  (H)--(C);

\draw  (C)--(D);

\draw  (H)--(B);

\foreach \point in {B,D,A1}
\fill [black] (\point) circle (3pt);

\foreach \point in {Z}
\fill [white] (\point) circle (3pt);
\foreach \point in {Z}
\draw  (\point) circle (3pt);

\coordinate [label=right:${f_\beta = \sqrt{5} {{z}^{3}}+\sqrt{5} {{y}^{3}}+3\sqrt{3} w {{x}^{2}}}$](M) at (-2,-2,0);

\coordinate [label=right:${\beta= {\displaystyle{\left(\frac{15}{76},\frac{3}{76},\frac{3}{76},-\frac{21}{76}\right)}}}$,   ](N) at (-2,-3,0) ;

\coordinate [label=below:${(-\frac{3}{4},-\frac{3}{4},\frac{9}{4},-\frac{3}{4})}$] (B) at (3,0,0);}
\end{tikzpicture}
\begin{tikzpicture}

{

\draw[line width=2pt](B) -- (A8) -- (A1) -- cycle;

\fill[gray!50](B) -- (A8) -- (A1) -- cycle;

\coordinate (A) at (0,0,0);
\coordinate [label=below:${(-\frac{3}{4},-\frac{3}{4},\frac{9}{4},-\frac{3}{4})}$] (B) at (3,0,0);
\coordinate (C) at (-2,1,0);
\coordinate (D) at (0,0,3);
\coordinate (E) at (3,3,0);
\coordinate (F) at (0,3,3);
\coordinate (G) at (3,0,3);
\coordinate (H) at (2,5,4);

\coordinate (Y) at (barycentric cs:B=1,C=1,D=1,H=1);
\coordinate [label=left:${(\frac{5}{4},-\frac{3}{4},-\frac{3}{4},\frac{1}{4})}$](A1) at (barycentric cs:C=1,H=2);
\coordinate [label=above:${}$](A2) at (barycentric cs:C=2,H=1);
\coordinate [label=above:${}$](A3) at (barycentric cs:B=1,H=2);
\coordinate [label=above:${}$](A4) at (barycentric cs:B=2,H=1);
\coordinate [label=below:${}$](A5) at (barycentric cs:D=2,H=1);
\coordinate [label=above:${}$](A6) at (barycentric cs:D=1,H=2);
\coordinate [label=above:${}$](A7) at (barycentric cs:D=1,B=2);
\coordinate [label=below:${(-\frac{3}{4},\frac{5}{4},\frac{1}{4},-\frac{3}{4})}$](A8) at (barycentric cs:D=2,B=1);
\coordinate [label=above:${}$](A9) at (barycentric cs:D=1,C=2);
\coordinate [label=above:${}$](B1) at (barycentric cs:D=2,C=1);
\coordinate [label=above:${}$](B2) at (barycentric cs:B=2,C=1);
\coordinate [label=above:${}$](B3) at (barycentric cs:B=1,C=2);
\coordinate [label=above:${}$](B4) at (barycentric cs:C=1,D=1,H=1);
\coordinate [label=above:${}$](B5) at (barycentric cs:B=1,D=1,H=1);
\coordinate [label=above:${}$](B6) at (barycentric cs:B=1,C=1,H=1);
\coordinate [label=above:${}$](B7) at (barycentric cs:B=1,C=1,D=1);
\coordinate [label=above:$\beta $] (Z)  at (barycentric cs:B=5,D=5,A1=9);

\draw [dashed] (B)--(C);

\draw (B)--(D);

\draw  (H)--(D);

\draw  (H)--(C);

\draw  (C)--(D);

\draw  (H)--(B);

\foreach \point in {B,A8,A1}
\fill [black] (\point) circle (3pt);

\foreach \point in {Z}
\fill [white] (\point) circle (3pt);
\foreach \point in {Z}
\draw  (\point) circle (3pt);

\coordinate [label=right:${f_\beta=\sqrt{5} {{z}^{3}}+3 \sqrt{5} {{y}^{2}} z+3 \sqrt{6} w {{x}^{2}}}$](M) at (-2,-2,0);

\coordinate [label=right:${\beta= {\displaystyle{\left(\frac{15}{76},\frac{3}{76},\frac{3}{76},-\frac{21}{76}\right)}}}$,   ](N) at (-2,-3,0) ;}
\end{tikzpicture}
\begin{tikzpicture}
{
\draw[line width=2pt](B5) -- (D) -- (A1) -- cycle;

\fill[gray!50](B5) -- (D) -- (A1) -- cycle;

\coordinate (A) at (0,0,0);
\coordinate  (B) at (3,0,0);
\coordinate (C) at (-2,1,0);
\coordinate [label=left:${(-\frac{3}{4},\frac{9}{4},-\frac{3}{4},-\frac{3}{4})}$](D) at (0,0,3);
\coordinate (E) at (3,3,0);
\coordinate (F) at (0,3,3);
\coordinate (G) at (3,0,3);
\coordinate (H) at (2,5,4);

\coordinate (Y) at (barycentric cs:B=1,C=1,D=1,H=1);
\coordinate [label=left:${(\frac{5}{4},-\frac{3}{4},-\frac{3}{4},\frac{1}{4})}$](A1) at (barycentric cs:C=1,H=2);
\coordinate [label=above:${}$](A2) at (barycentric cs:C=2,H=1);
\coordinate [label=above:${}$](A3) at (barycentric cs:B=1,H=2);
\coordinate [label=above:${}$](A4) at (barycentric cs:B=2,H=1);
\coordinate [label=below:${}$](A5) at (barycentric cs:D=2,H=1);
\coordinate [label=above:${}$](A6) at (barycentric cs:D=1,H=2);
\coordinate [label=below:${}$](A7) at (barycentric cs:D=1,B=2);
\coordinate [label=below:${}$](A8) at (barycentric cs:D=2,B=1);
\coordinate [label=above:${}$](A9) at (barycentric cs:D=1,C=2);
\coordinate [label=above:${}$](B1) at (barycentric cs:D=2,C=1);
\coordinate [label=above:${}$](B2) at (barycentric cs:B=2,C=1);
\coordinate [label=above:${}$](B3) at (barycentric cs:B=1,C=2);
\coordinate [label=above:${}$](B4) at (barycentric cs:C=1,D=1,H=1);
\coordinate [label=right:${(\frac{1}{4},\frac{1}{4},\frac{1}{4},-\frac{3}{4})}$](B5) at (barycentric cs:B=1,D=1,H=1);
\coordinate [label=above:${}$](B6) at (barycentric cs:B=1,C=1,H=1);
\coordinate [label=above:${}$](B7) at (barycentric cs:B=1,C=1,D=1);
\coordinate [label=above:$\beta $] (Z)  at (barycentric cs:B5=6,D=1,A1=3);

\draw [dashed] (B)--(C);

\draw (B)--(D);

\draw  (H)--(D);

\draw  (H)--(C);

\draw  (C)--(D);

\draw  (H)--(B);

\foreach \point in {B5,D,A1}
\fill [black] (\point) circle (3pt);

\foreach \point in {Z}
\fill [white] (\point) circle (3pt);
\foreach \point in {Z}
\draw  (\point) circle (3pt);

\coordinate [label=right:${f_\beta = 6 x y z+{{y}^{3}}+3 w {{x}^{2}}}$](M) at (-2,-2,0);

\coordinate [label=right:${\beta= {\displaystyle{\left(\frac{9}{20},\frac{3}{20},-\frac{3}{20},-\frac{9}{20}\right)}}}$](M) at (-2,-3,0);}
\end{tikzpicture}
\begin{tikzpicture}
{
\draw[line width=2pt](B5) -- (B1) -- (A1) -- cycle;

\fill[gray!50](B5) -- (B1) -- (A1) -- cycle;

\coordinate (A) at (0,0,0);
\coordinate (B) at (3,0,0);
\coordinate (C) at (-2,1,0);
\coordinate (D) at (0,0,3);
\coordinate (E) at (3,3,0);
\coordinate (F) at (0,3,3);
\coordinate (G) at (3,0,3);
\coordinate (H) at (2,5,4);

\coordinate (Y) at (barycentric cs:B=1,C=1,D=1,H=1);
\coordinate [label=left:${(\frac{5}{4},-\frac{3}{4},-\frac{3}{4},\frac{1}{4})}$](A1) at (barycentric cs:C=1,H=2);
\coordinate [label=left:${(-\frac{3}{4},\frac{5}{4},-\frac{3}{4},\frac{1}{4})}$](B1) at (barycentric cs:D=2,C=1);
\coordinate [label=right:${(\frac{1}{4},\frac{1}{4},\frac{1}{4},-\frac{3}{4})}$](B5) at (barycentric cs:B=1,D=1,H=1);

\coordinate [label=above:$\beta $] (Z)  at (barycentric cs:B5=2,B1=1,A1=1);

\draw [dashed] (B)--(C);

\draw (B)--(D);

\draw  (H)--(D);

\draw  (H)--(C);

\draw  (C)--(D);

\draw  (H)--(B);

\foreach \point in {B5,B1,A1}
\fill [black] (\point) circle (3pt);

\foreach \point in {Z}
\fill [white] (\point) circle (3pt);
\foreach \point in {Z}
\draw  (\point) circle (3pt);

\coordinate [label=right:${f_\beta = w{{z}^{2}}+2 x y z+w {{x}^{2}}}$](M) at (-2,-2,0);

\coordinate [label=right:${\beta= {\displaystyle{\left(\frac{1}{4},\frac{1}{4},-\frac{1}{4},-\frac{1}{4}\right)}}}$](M) at (-2,-3,0);}

\end{tikzpicture}

\begin{tikzpicture}

\draw[line width=2pt](D) -- (A4) -- (A1) -- cycle;

\fill[gray!50](D) -- (A4) -- (A1) -- cycle;

\coordinate (A) at (0,0,0);
\coordinate  (B) at (3,0,0);
\coordinate (C) at (-2,1,0);
\coordinate [label=left:${(-\frac{3}{4},\frac{9}{4},-\frac{3}{4},-\frac{3}{4})}$](D) at (0,0,3);
\coordinate (E) at (3,3,0);
\coordinate (F) at (0,3,3);
\coordinate (G) at (3,0,3);
\coordinate (H) at (2,5,4);

\coordinate (Y) at (barycentric cs:B=1,C=1,D=1,H=1);
\coordinate [label=left:${(\frac{5}{4},-\frac{3}{4},-\frac{3}{4},\frac{1}{4})}$](A1) at (barycentric cs:C=1,H=2);
\coordinate [label=above:${(\frac{1}{4},-\frac{3}{4},\frac{5}{4},-\frac{3}{4})}$](A4) at (barycentric cs:B=2,H=1);

\coordinate [label=above:$\beta $] (Z)  at (barycentric cs:D=7,A4=9,A1=9);

\draw [dashed] (B)--(C);

\draw (B)--(D);

\draw  (H)--(D);

\draw  (H)--(C);

\draw  (C)--(D);

\draw  (H)--(B);

\foreach \point in {D,A4,A1}
\fill [black] (\point) circle (3pt);

\foreach \point in {Z}
\fill [white] (\point) circle (3pt);
\foreach \point in {Z}
\draw  (\point) circle (3pt);

\coordinate [label=right:${f_\beta={{3}^{\frac{3}{2}}} x {{z}^{2}}+\sqrt{7} {{y}^{3}}+{{3}^{\frac{3}{2}}} w {{x}^{2}}}$](M) at (-2,-2,0);

\coordinate [label=right:${\beta= {\displaystyle{\left(\frac{33}{100},\frac{9}{100},-\frac{3}{100},-\frac{39}{100}\right)}}}$](M) at (-2,-3,0);

\end{tikzpicture}
\begin{tikzpicture}

\draw[line width=2pt](D) -- (A4) -- (B4) -- cycle;

\fill[gray!50](D) -- (A4) -- (B4) -- cycle;

\coordinate (A) at (0,0,0);
\coordinate  (B) at (3,0,0);
\coordinate (C) at (-2,1,0);
\coordinate [label=below:${(-\frac{3}{4},\frac{9}{4},-\frac{3}{4},-\frac{3}{4})}$](D) at (0,0,3);
\coordinate (E) at (3,3,0);
\coordinate (F) at (0,3,3);
\coordinate (G) at (3,0,3);
\coordinate (H) at (2,5,4);

\coordinate (Y) at (barycentric cs:B=1,C=1,D=1,H=1);
\coordinate [label=right:${(\frac{1}{4},-\frac{3}{4},\frac{5}{4},-\frac{3}{4})}$](A4) at (barycentric cs:B=2,H=1);
\coordinate [label=left:${(\frac{1}{4},\frac{1}{4},-\frac{3}{4},\frac{1}{4})}$](B4) at (barycentric cs:C=1,D=1,H=1);

\coordinate [label=above:$\beta $] (Z)  at (barycentric cs:D=2,A4=12,B4=21);

\draw [dashed] (B)--(C);

\draw (B)--(D);

\draw  (H)--(D);

\draw  (H)--(C);

\draw  (C)--(D);

\draw  (H)--(B);

\foreach \point in {D,A4,B4}
\fill [black] (\point) circle (3pt);

\foreach \point in {Z}
\fill [white] (\point) circle (3pt);
\foreach \point in {Z}
\draw  (\point) circle (3pt);

\coordinate [label=right:${f_\beta={{3}^{\frac{3}{2}}} x {{z}^{2}}+\sqrt{7} {{y}^{3}}+{{3}^{\frac{3}{2}}} w {{x}^{2}}}$](M) at (-2,-2,0);

\coordinate [label=right:${\beta= {\displaystyle{\left(\frac{27}{140},\frac{3}{140},-\frac{9}{140},-\frac{3}{20}\right)}}}$](M) at (-2,-3,0);
\end{tikzpicture}

\begin{tikzpicture}

\draw[line width=2pt](B1) -- (A4) -- (A1) -- cycle;

\fill[gray!50](B1) -- (A4) -- (A1)-- cycle;

\coordinate (A) at (0,0,0);
\coordinate  (B) at (3,0,0);
\coordinate (C) at (-2,1,0);
\coordinate (D) at (0,0,3);
\coordinate (E) at (3,3,0);
\coordinate (F) at (0,3,3);
\coordinate (G) at (3,0,3);
\coordinate (H) at (2,5,4);

\coordinate (Y) at (barycentric cs:B=1,C=1,D=1,H=1);
\coordinate [label=left:${(\frac{5}{4},-\frac{3}{4},-\frac{3}{4},\frac{1}{4})}$](A1) at (barycentric cs:C=1,H=2);
\coordinate [label=below:${(\frac{1}{4},-\frac{3}{4},\frac{5}{4},-\frac{3}{4})}$](A4) at (barycentric cs:B=2,H=1);
\coordinate [label=below:${(-\frac{3}{4},\frac{5}{4},-\frac{3}{4},\frac{1}{4})}$](B1) at (barycentric cs:D=2,C=1);

\coordinate [label=above:$\beta $] (Z)  at (barycentric cs:B1=9,A4=8,A1=5);

\draw [dashed] (B)--(C);

\draw (B)--(D);

\draw  (H)--(D);

\draw  (H)--(C);

\draw  (C)--(D);

\draw  (H)--(B);

\draw (Z)--(Y);

\foreach \point in {A4,A1,B1}
\fill [black] (\point) circle (3pt);

\foreach \point in {Y}
\fill [gray] (\point) circle (3pt);

\foreach \point in {Z}
\fill [white] (\point) circle (3pt);
\foreach \point in {Z}
\draw  (\point) circle (3pt);

\coordinate [label=right:${f_\beta=2\sqrt{2} x {{z}^{2}}+3 w{{y}^{2}}+\sqrt{5} w{{x}^{2}}}$](M) at (-2,-2,0);

\coordinate [label=right:${\beta= {\displaystyle{\left(\frac{3}{44},\frac{3}{44},-\frac{1}{44},-\frac{5}{44}\right)}}}$](M) at (-2,-3,0);
\end{tikzpicture}
\begin{tikzpicture}

\draw[line width=2pt](A1) -- (A5) -- (A4) -- cycle;

\fill[gray!50](A1) -- (A5) -- (A4)-- cycle;

\coordinate (A) at (0,0,0);
\coordinate (B) at (3,0,0);
\coordinate (C) at (-2,1,0);
\coordinate (D) at (0,0,3);
\coordinate (E) at (3,3,0);
\coordinate (F) at (0,3,3);
\coordinate (G) at (3,0,3);
\coordinate (H) at (2,5,4);

\coordinate (Y) at (barycentric cs:B=1,C=1,D=1,H=1);
\coordinate [label=left:${(\frac{5}{4},-\frac{3}{4},-\frac{3}{4},\frac{1}{4})}$](A1) at (barycentric cs:C=1,H=2);
\coordinate [label=right:${(\frac{1}{4},-\frac{3}{4},\frac{5}{4},-\frac{3}{4})}$](A4) at (barycentric cs:B=2,H=1);
\coordinate [label=left:${(\frac{1}{4},\frac{5}{4},-\frac{3}{4},-\frac{3}{4})}$](A5) at (barycentric cs:D=2,H=1);

\coordinate [label=above:$\beta $] (Z)  at (barycentric cs:A4=3,A5=3,A1=2);

\draw [dashed] (B)--(C);

\draw (B)--(D);

\draw  (H)--(D);

\draw  (H)--(C);

\draw  (C)--(D);

\draw  (H)--(B);

\foreach \point in {A5,A4,A1}
\fill [black] (\point) circle (3pt);

\foreach \point in {Z}
\fill [white] (\point) circle (3pt);
\foreach \point in {Z}
\draw  (\point) circle (3pt);

\coordinate [label=right:${f_\beta=\sqrt{3} x {{z}^{2}}+\sqrt{3} x {{y}^{2}}+\sqrt{2} w {{x}^{2}}}$](M) at (-2,-2,0);

\coordinate [label=right:${\beta= {\displaystyle{\left(\frac{1}{2},0,0,-\frac{1}{2}\right)}}}$](M) at (-2,-3,0);
\end{tikzpicture}

\begin{tikzpicture}

\draw[line width=2pt](A8) -- (A1) -- (A4) -- cycle;

\fill[gray!50](A8) -- (A1) -- (A4)-- cycle;

\coordinate (A) at (0,0,0);
\coordinate  (B) at (3,0,0);
\coordinate (C) at (-2,1,0);
\coordinate (D) at (0,0,3);
\coordinate (E) at (3,3,0);
\coordinate (F) at (0,3,3);
\coordinate (G) at (3,0,3);
\coordinate (H) at (2,5,4);

\coordinate (Y) at (barycentric cs:B=1,C=1,D=1,H=1);
\coordinate [label=left:${(\frac{5}{4},-\frac{3}{4},-\frac{3}{4},\frac{1}{4})}$](A1) at (barycentric cs:C=1,H=2);
\coordinate [label=right:${(\frac{1}{4},-\frac{3}{4},\frac{5}{4},-\frac{3}{4})}$](A4) at (barycentric cs:B=2,H=1);
\coordinate [label=below:$\qquad {(-\frac{3}{4},\frac{5}{4},\frac{1}{4},-\frac{3}{4})}$](A8) at (barycentric cs:D=2,B=1);

\coordinate [label=above:$\beta $] (Z)  at (barycentric cs:A4=1,A1=3,A8=3);

\draw [dashed] (B)--(C);

\draw (B)--(D);

\draw  (H)--(D);

\draw  (H)--(C);

\draw  (C)--(D);

\draw  (H)--(B);

\foreach \point in {A1,A4,A8}
\fill [black] (\point) circle (3pt);

\foreach \point in {Z}
\fill [white] (\point) circle (3pt);
\foreach \point in {Z}
\draw  (\point) circle (3pt);

\coordinate [label=right:${f_\beta=x {{z}^{2}}+\sqrt{3} {{y}^{2}} z+\sqrt{3} w {{x}^{2}}}$](M) at (-2,-2,0);

\coordinate [label=right:${\beta= {\displaystyle{\left(\frac{1}{4},\frac{3}{28},-\frac{1}{28},-\frac{9}{28}\right)}}}$](M) at (-2,-3,0);
\end{tikzpicture}
\begin{tikzpicture}

\draw[line width=2pt](A8) -- (B4) -- (A4) -- cycle;

\fill[gray!50](A8) -- (B4) -- (A4)-- cycle;

\coordinate (A) at (0,0,0);
\coordinate  (B) at (3,0,0);
\coordinate (C) at (-2,1,0);
\coordinate (D) at (0,0,3);
\coordinate (E) at (3,3,0);
\coordinate (F) at (0,3,3);
\coordinate (G) at (3,0,3);
\coordinate (H) at (2,5,4);

\coordinate (Y) at (barycentric cs:B=1,C=1,D=1,H=1);
\coordinate [label=right:${(\frac{1}{4},-\frac{3}{4},\frac{5}{4},-\frac{3}{4})}$](A4) at (barycentric cs:B=2,H=1);
\coordinate [label=below:${(-\frac{3}{4},\frac{5}{4},\frac{1}{4},-\frac{3}{4})}$](A8) at (barycentric cs:D=2,B=1);
\coordinate [label=above:${(\frac{1}{4},\frac{1}{4},-\frac{3}{4},\frac{1}{4})}$](B4) at (barycentric cs:C=1,D=1,H=1);

\coordinate [label=above:$\beta $] (Z)  at (barycentric cs:A4=3,B4=6,A8=1);

\draw [dashed] (B)--(C);

\draw (B)--(D);

\draw  (H)--(D);

\draw  (H)--(C);

\draw  (C)--(D);

\draw  (H)--(B);

\foreach \point in {B4,A4,A8}
\fill [black] (\point) circle (3pt);

\foreach \point in {Z}
\fill [white] (\point) circle (3pt);
\foreach \point in {Z}
\draw  (\point) circle (3pt);

\coordinate [label=right:${f_\beta=\sqrt{3} x {{z}^{2}}+{{y}^{2}} z+2 \sqrt{3} w x y}$](M) at (-2,-2,0);

\coordinate [label=right:${\beta= {\displaystyle{\left(\frac{3}{20},\frac{1}{20},-\frac{1}{20},-\frac{3}{20}\right)}}}$](M) at (-2,-3,0);
\end{tikzpicture}

\section{Example of an Affinely Dependent Set}
\label{EAD}
Let us come back to cubic curves, and we discuss a set $S$ which is affinely dependent, hence does not satisfy the condition in Corollary \ref{Fbeta}.

Notation as Section \ref{SOCC}. 

Consider the origin $\beta = (0,0,0)$ as the nearest point form the origin $O=(0,0,0)$ to the convex set generated by the set $S$ of four points
$$S=\{s_1=(-1,-1,2),s_2=(-1,2,-1),s_3=(2,-1,-1),O=(0,0,0)\}.$$ 
\begin{figure}[H]
\centering
\begin{tikzpicture}

\coordinate [label=left:${s_1\ }$] (A) at (0,0);
\coordinate [label=right:${\ s_2}$] (B) at (3,0);
\draw (A) -- (B);

\node [label=above:${}$] (X) at
($ (A) !  .5!(B) $) {};

\coordinate [label=above:${s_3}$] (D) at
($ (A) ! .5 ! (B) ! {sin(60)*2} ! 90:(B) $) {};

\draw (A) -- (D) -- (B);

\coordinate (F) at
($ (D) ! 2/3!(X)   $) {};

\coordinate [label=below:${}$] (E) at ($ (A)! 1/3!(B) $) {};
\coordinate [label=below:${}$] (C) at ($ (A)! 2/3!(B) $) {};
\coordinate [label=left:${}$] (G) at ($ (A)! 1/3!(D) $) {};
\coordinate [label=left:${}$] (H) at ($ (A)! 2/3!(D) $) {};
\coordinate  [label=right:${}$] (I) at ($ (D)! 1/3!(B) $) {};
\coordinate  [label=right:${}$] (J) at ($ (D)! 2/3!(B) $) {};

\draw[line width=2pt](A) -- (B)  -- (D) -- cycle;

\fill[gray!50](A) -- (B) -- (D) -- cycle;

\coordinate [label=right:${\quad O=\beta}$] (F) at (F);
\foreach \point in {A,B,D,F}
\fill [black] (\point) circle (3pt);

\draw  (F) circle (5pt);

\end{tikzpicture}
\caption{$\beta=(0,0,0)$ as a point of ${\mathbb A}^{\cal B}_{4,+}$}
\end{figure}
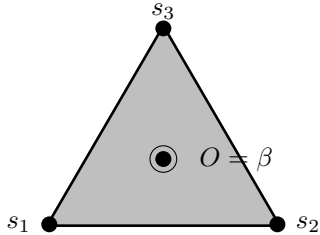
As before, we write $\beta$ as a convex combination of the points in $S$. That is, 
\begin{equation}\label{s1s2s3o}
\beta = as_1+bs_2+cs_3+pO
\end{equation}
where $a,b,c,d\in {\mathbb R}$ and $$a+b+c+p=1.$$
Substitude $\beta = O $ into (\ref{s1s2s3o}), we have
\begin{equation}\label{os1s2s3}O=\frac{1}{1-p}\, (as_1+bs_2+cs_3).\end{equation}
Since $a+b+c+p=1$, we have $a+b+c=1-p$. Thus is right hand side of (\ref{os1s2s3}) is a convex combination. But $\{s_1,s_2,s_3\}$ is an affinely independent set, so the solution of (\ref{os1s2s3}) is unique. Obviously we have
$$O=s_1+s_2+s_3,$$
thus $$
a=b=c=1-p.
$$
Let $q=1-p$, then (\ref{s1s2s3o}) becomes
\begin{equation}\label{qp}
\beta = q(s_1+s_2+s_3)+pO
\end{equation}
where $p,q\in {\mathbb R}$ and $p+q=1$.

Even though $S$ does not satisfy the condition in Corollary \ref{Fbeta}, we can still assume that $f_\beta$ is of the form 
$$\lambda_1 x^3+\lambda_2 y^3+\lambda_3 z^3+\mu_1 xyz.$$  If $f_\beta = \beta ={\rm diag}(0,0,0)$, then at least $f_\beta$ is diagonal, so Lemma \ref{diagfbeta} still works. If we force us to use Corollary \ref{Fbeta}, then (\ref{qp}) implies that 
\begin{equation}
\lambda_1=\lambda_2=\lambda_3 = \sqrt{q}, \mbox{\quad and\quad} \mu_1=\sqrt{6p}.
\end{equation}
Thus, let 
\begin{equation}\label{fbetalm}
f_\beta= \lambda(x^3+y^3+z^3)+\mu xyz,
\end{equation}
then we can check that the moment matrix ${\frak m}(f_\beta)$ of (\ref{fbetalm}) is ${\rm diag}(0,0,0)$.

For the situation of Corollary \ref{Fbeta} which is discussed for most part of this paper, the solution is unique for each $S$ if it exists. However, the family (\ref{fbetalm}) tell us what may happen for affinely dependent $S$, there may exists a family, not necessarilly one dimensional, of critical points of the function $\|\frak{m}\|^2$. 

In fact (\ref{fbetalm}) is a generic family of plane cubics called the Hesse's
canonical equations of cubic curves (see \cite{CAG}), and is dicussed in \cite{Ness}.

\section{The SAGE  Notebook}

\begin{verbatim}
sage: r=2
sage: e=RootSystem(['A',r]);e
sage: E=e.ambient_space();E
sage: WCR=WeylCharacterRing(CartanType(e));WCR
sage: fw=WCR.fundamental_weights();fw
sage: def WeightsOfHighestWeight(f): return WCR(f).weight_multiplicities()
sage: WeightsOfHighestWeight(3*fw[1])
sage: WT=[i for i in WeightsOfHighestWeight(3*fw[1])];WT
sage: wt=[[j[i] for i in range(r+1)] for j in WT];wt
sage: def LaterThan(A,C): return A[A.index(C[-1:][0][-1:][0])+1:]
sage: def Pairs(B,A): return [[[j][0]+[i] for i in LaterThan(A,[j])] for j in B]
sage: def Flat(A): return [i[j] for i in A for j in range(len(i))]
sage: def FlatP(B,A): return Flat(Pairs(B,A))
sage: def Card(A,k): 
...       B=[[j] for j in A] 
...       for i in range(k-1):
...            B=FlatP(B,A)
...       else:   return(B)
sage: def Dele(A,i): return A[:i]+A[i+1:]
sage: def Aff(A,k) : return [[Dele(A,k)[j][i]-A[k][i] for i in range(len(A[0]))]
...    for j in range(len(A)-1)]
sage: def AffRowLinearIndependence(A):
...       return matrix(Aff(A,0)).row_space().dimension()==matrix(Aff(A,0)).nrows()
sage: def AA(B,i): return transpose(matrix(Aff(B,i)))
sage: def PP(A) : 
...       return identity_matrix(A.nrows())-A*(transpose(A)*A)^(-1)*transpose(A)
sage: def NT(B): return PP(AA(B,0))*transpose(matrix(B[0]))
sage: def Inde(k): 
...       return [i for i in Card(wt,k) if AffRowLinearIndependence(i)==True]
sage: def ColMatList(j) : return [i[0] for i in j]
sage: def MatCol(j) : return matrix([[i] for i in j])
sage: def InWeylChamber(B): 
...       return max([B[1:][i]-B[:-1][i] for i in range(len(B)-1)])<=0
sage: def wts(m): 
...       return [vector([j[k] for k in range(r+1)]) for j in 
...       [i for i in WeightsOfHighestWeight(m*(WCR.fundamental_weights()[1]))]]
sage: def Diff(a,b):
...       if any(a[1][i]<b[1][i] for i in range(len(a[1])))==True or a[0]*b[0]==0:
...          return [0,vector(0 for i in range(len(a[1])))]
...       else:
...           return [a[0]*b[0]*prod(falling_factorial(a[1][i],b[1][i])
...           for i in range(len(a[1]))),
...           vector(a[1][i]-b[1][i] for i in range(len(a[1])))]
sage: def Pol(B,A):
...       if sum(B[0][1])>=sum(A[0][1]):
...          PP=[i for i in [Diff(B[k],A[j]) for j in range(len(A)) 
...             for k in range(len(B))]]
...          WTS=wts(max([sum(i[1]) for i in PP]))
...          L=len(WTS)
...          return [[sum([i[0] for i in PP if i[1]==WTS[j]]),WTS[j]]
...           for j in range(L)]
...       else:
...           return [[0,vector([0 for i in range(len(A[0][1]))])]]
sage: def Polar(B,A): 
...       PL=Pol(B,A)
...       if all(i[0]==0 for i in PL)==True:
...           return [[0,vector([0 for i in range(len(A[0][1]))])]]
...       else:
...           return [i for i in PL if i[0]!=0]
sage: wts1=[[1,wts(1)[i]] for i in range(len(wts(1)))];wts1
sage: p=sum(wt[0])/(r+1);p
sage: def Moment(A): 
...       return matrix(
...         [[Polar(Polar(A,[wts1[i]]),Polar(A,[wts1[j]]))[0][0]
...          for i in range(len(wts1))]
...          for j in range(len(wts1))])/Polar(A,A)[0][0]-p*identity_matrix(r+1)
sage: MTW=[Moment([[1,t]]) for t in wt]
sage: mtw=[[MTW[j][i][i] for i in range(r+1)] for j in range(len(MTW))]
sage: Ind=[i for i in Card(mtw,2) if AffRowLinearIndependence(i)==True]
sage: MCC=[ColMatList(j) for j in [NT(i) for i in Ind]]
sage: MMC=[[Ind[i],MCC[i]]
...       for i in range(len(Ind))
...        if Polyhedron(vertices=Ind[i]).contains(MCC[i])==True]
sage: MMW=[i for i in MMC if InWeylChamber(i[1])==True];MMW
sage: [i for i in MMW if i[1]!=[0 for j in range(r+1)]]
\end{verbatim}




\end{document}